\DeclareMathOperator*{\fiint}{\ensuremath{\iint\text{\kern-1.36em{\raisebox{5.87pt}{\rotatebox{-93}{$\setminus$}}}}}}
\DeclareMathOperator{\diam}{diam}
\DeclareMathOperator{\dist}{dist}
\DeclareMathOperator{\pr}{pr}
\DeclareMathOperator{\smallt}{t}
\DeclareMathOperator{\HMT}{HMT}
\DeclareMathOperator{\loc}{loc}
\newcommand{\nablat}{\nabla_{\smallt}}
\newcommand{\nablahmt}{\nabla_{\HMT}}
\newcommand{\Lip}{\operatorname{Lip}}
\newcommand{\R}{\mathbb{R}}
\newcommand{\ree}{\mathbb{R}^{n+1}}
\newcommand{\D}{\mathbb{D}}
\newcommand{\N}{\mathbb{N}}
\newcommand{\Z}{\mathbb{Z}}
\newcommand{\Bc}{\mathcal{B}}
\newcommand{\Hc}{\mathcal{H}}
\newcommand{\Cc}{\mathcal{C}}
\newcommand{\Ac}{\mathcal{A}}
\newcommand{\Dc}{\mathcal{D}}
\newcommand{\Ec}{\mathcal{E}}
\newcommand{\Rc}{\mathcal{R}}
\newcommand{\Uc}{\mathcal{U}}
\newcommand{\Cs}{\mathscr{C}}
\newcommand{\util}{\widetilde{u}}
\newcommand{\eps}{\varepsilon}
\newcommand{\pom}{\partial\Omega}
\newtheorem{theorem}[equation]{Theorem}
\newtheorem*{theorem*}{Theorem}
\newtheorem{lemma}[equation]{Lemma}
\newtheorem{corollary}[equation]{Corollary}
\newtheorem{proposition}[equation]{Proposition}
\theoremstyle{definition}
\newtheorem{defin}[equation]{Definition}
\newtheorem*{defin*}{Definition}
\newtheorem*{question*}{Question}
\newtheorem{problem}[equation]{Problem}
\newtheorem*{problem*}{Problem}
\newtheorem{example}[equation]{Example}
\newtheorem{remark}[equation]{Remark}
\newtheorem*{remark*}{Remark}
\newtheorem{notation}[equation]{Notation}
\numberwithin{equation}{section}
\title{Connectivity conditions and boundary Poincar\'e inequalities}
\author{Olli Tapiola and Xavier Tolsa}
\address{Olli Tapiola, Departament de Matemàtiques, Universitat Autònoma de Barcelona, Bar\-ce\-lo\-na, Catalonia.}
\email{olli.m.tapiola@gmail.com}
\address{Xavier Tolsa, ICREA, Barcelona, Departament de Matemàtiques, Universitat Autònoma de Barcelona, and Centre de Recerca Matemàtica, Barcelona, Catalonia.}
\email{xtolsa@mat.uab.cat}
\date{\today}
\keywords{}
\subjclass[2020]{28A75, 46E35, 35J25}
\thanks{The authors were supported by the European Research Council (ERC) under the European Union's Horizon 2020 research and innovation programme (grant agreement 101018680) and by the Deutsche Forschungsgemeinschaft (DFG, German Research Foundation) under Germany's Excellence Strategy -- EXC-2047/1 -- 390685813. X.T. was also partially supported by MICINN (Spain) under the grant PID2020-114167GB-I00, the María de Maeztu Program for units of excellence (Spain) (CEX2020-001084-M), and 2021-SGR-00071 (Catalonia).}
\begin{document}

\begin{abstract}
  Inspired by recent work of Mourgoglou and the second named author, and earlier work of Hofmann, Mitrea and Taylor, we consider connections between the local John condition, the Harnack chain condition and weak boundary Poincar\'e inequalities in open sets $\Omega \subset \R^{n+1}$, with codimension $1$ Ahlfors--David regular boundaries. First, we prove that if $\Omega$ satisfies both the local John condition and the exterior corkscrew condition, then $\Omega$ also satisfies the Harnack chain condition (and hence, is a chord-arc domain). Second, we show that if $\Omega$ is a $2$-sided chord-arc domain, then the boundary $\pom$ supports a Heinonen--Koskela type weak $1$-Poincar\'e inequality. We also construct an example of a set $\Omega \subset \R^{n+1}$ such that the boundary $\pom$ is Ahlfors--David regular and supports a weak boundary $1$-Poincar\'e inequality but $\Omega$ is not a chord-arc domain. Our proofs utilize significant advances in particularly harmonic measure, uniform rectifiability and metric Poincaré theories.
\end{abstract}

\maketitle

\tableofcontents

\section{Introduction}

Extending the PDE theory from smooth or otherwise ``nice'' domains to spaces with rough geometries has been under intensive research over the previous decades. The new tools and techniques developed by numerous authors have helped to overcome many difficulties in this field but many questions remain open. See \cite{hofmann_survey} and \cite{mattila} for recent surveys related to key developments in some parts of this and related research.

In this paper, we consider connections between John-type conditions, the Harnack chain condition and Poincar\'e inequalities in spaces with rough boundaries, inspired by the recent work of Mourgoglou and the second named author \cite{mourgogloutolsa2}. In their work, they solved a long-standing open problem of Kenig \cite[Problem\,3.2.2]{kenig} (see also \cite[Question\,2.5]{toro}) about the solvability of the regularity problem for the Laplacian. More precisely, they proved the following theorem. We give the exact definitions of the objects and concepts in the theorem in Sections \ref{section:notation} and \ref{section:john_conditions}.

\begin{theorem}[{\cite[part of Theorems 1.2 and 1.5]{mourgogloutolsa2}}]
  \label{theorem:mourgogloutolsa}
  Let $\Omega \subset \R^{n+1}$ be a bounded open set satisfying the corkscrew condition (see Definition \ref{defin:corkscrew}), with $n$-dimensional Ahlfors--David regular boundary $\pom$ (see Definition \ref{defin:adr}). For $1<p\leq2$ we have:  
  \begin{enumerate}
    \item[(a)] The regularity problem for the Laplacian is solvable in $L^p$ for $\Omega$ (see Definition \ref{defin:regularity_problem}) if and only if the Dirichlet problem is solvable in $L^{p'}$ (see Definition \ref{defin:lp-sovability}), where $p'$ satisfies $1/p + 1/p' = 1$.

    \item[(b)] Suppose that either $\pom$ supports a weak $p$-Poincar\'e inequality (see Definition \ref{defin:weak_poincare}) or that $\Omega$ satisfies the $2$-sided local John condition (see Definition \ref{defin:2-sided_conditions} and Definition \ref{defin:john_conditions}). If the regularity problem for the Laplacian is solvable in $L^p$ for $\Omega$, then the tangential regularity problem for the Laplacian is also solvable in $L^p$ for $\Omega$ (see Definition \ref{defin:regularity_problem}).
  \end{enumerate}
\end{theorem}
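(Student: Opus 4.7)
For part (a), the plan is to realize the equivalence between $R_p$ and $D_{p'}$ through layer potential duality. The easy direction, $D_{p'} \Rightarrow R_p$, follows by representing the regularity-data solution via the single-layer potential and using the jump relations together with a duality pairing against the Dirichlet solution at the conjugate exponent. The hard direction is $R_p \Rightarrow D_{p'}$, which was Kenig's long-standing open problem. My strategy is to exploit recent characterizations of solvability in terms of $A_\infty$-type estimates for harmonic measure and Carleson measure bounds on truncated square functions. Given $R_p$ solvability, one extracts, via a sawtooth domain approximation and good-$\lambda$ inequalities in the spirit of Dahlberg-Jerison-Kenig, a control of the nontangential maximal function of $u$ by the square function; combined with $T(b)$-type arguments on $\pom$, this should yield the required $L^{p'}$ Dirichlet estimate.

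For part (b), the goal is to promote the nontangential $L^p$ gradient bound to an $L^p$ tangential-derivative bound on the boundary. The idea is to express differences of boundary values of $u$ as integrals of $\nabla u$ along interior paths. If $\pom$ supports a weak $p$-Poincar\'e inequality, one can assemble a Haj\l asz-type upper gradient on $\pom$ from the nontangential maximal function of $\nabla u$ (using Whitney balls together with the corkscrew condition), and the weak Poincar\'e inequality transcribes the resulting inequality into tangential regularity data. Under the $2$-sided local John condition, one instead uses paired John paths from both sides connecting two boundary points to a common chain of interior corkscrews, integrates $\nabla u$ along these paths, and applies a standard maximal function argument together with the Ahlfors-David regularity of $\pom$.

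The main obstacle is the converse direction in part (a): one must produce $L^{p'}$ Dirichlet estimates from $L^p$ regularity estimates on a domain with purely Ahlfors-David regular boundary and no a priori connectivity or rectifiability structure. Classical Lipschitz-domain tools like explicit inversion of layer potentials are unavailable, and the program of \cite{mourgogloutolsa2} is devoted to setting up the right extrapolation and corona-decomposition machinery in this rough setting. Part (b) under the $2$-sided local John condition is also delicate, since without a Poincar\'e inequality on $\pom$ one must rely entirely on interior path connectivity to synthesize boundary regularity of the trace.
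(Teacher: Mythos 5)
The paper does not prove Theorem~\ref{theorem:mourgogloutolsa} --- it is a direct citation of \cite{mourgogloutolsa2}, so there is no in-paper argument to compare against. Judging your sketch on its own terms, the most serious issue is in part~(a): you have swapped the easy and hard directions. Kenig's long-standing open problem \cite[Problem~3.2.2]{kenig} is the implication $D_{p'} \Rightarrow R_p$ (given solvability of the Dirichlet problem in $L^{p'}$, deduce solvability of the regularity problem in $L^p$), and that is the main contribution of \cite{mourgogloutolsa2}; the direction $R_p \Rightarrow D_{p'}$ is the one that admits a classical duality argument. The present paper makes this clear: immediately after stating the theorem it remarks that since chord-arc domains are known to satisfy $D_{p'}$ for some $p'$, part~(a) yields $R_p$ for those domains --- i.e.\ the implication being exploited is $D_{p'}\Rightarrow R_p$, the nontrivial one. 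Your duality-pairing description is actually a reasonable sketch of $R_p \Rightarrow D_{p'}$, but you attach it to the wrong arrow, and the toolbox you list for what you call ``the hard direction'' (sawtooth approximation, good-$\lambda$, $T(b)$) is not the mechanism needed to manufacture regularity estimates from Dirichlet estimates. Fixing the direction swap is a prerequisite to evaluating anything further in part~(a).

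In part~(b) your high-level plan is closer, but the logic is still slightly off. The actual task is a boundary-side norm comparison: from the defining estimate $\|N_*(\nabla u)\|_{L^p(\pom)} \le C\|f\|_{\dot W^{1,p}(\pom)}$ of Definition~\ref{defin:regularity_problem}, one must pass to $\|N_*(\nabla u)\|_{L^p(\pom)} \lesssim \|\nablat f\|_{L^p(\pom)}$, which reduces to showing $\|f\|_{\dot W^{1,p}(\pom)} \lesssim \|\nablat f\|_{L^p(\pom)}$ for Lipschitz $f$. That bound is obtained by producing a Haj\l asz gradient of $f$ out of $\nablat f$ (not out of $N_*(\nabla u)$, as you write), via the pointwise Poincar\'e characterization and the a.e.\ identity $|\nablat f(x)| = \limsup_{y\to x}|f(x)-f(y)|/|x-y|$ (Lemma~\ref{lemma:norm_of_tangential_gradient}), using either the weak $p$-Poincar\'e inequality on $\pom$ or the $2$-sided local John structure to run the maximal function argument. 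Your sketch has the right ingredients (Haj\l asz upper gradients, truncated maximal functions, ADR), but it feeds in the interior quantity $N_*(\nabla u)$ where the boundary quantity $\nablat f$ is required.
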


Remark that, in particular, from (a) in the theorem above it follows that the regularity problem is solvable in $L^p$ for chord-arc domains (see Definition \ref{defin:cad}), for some $p > 1$ (see e.g. \cite{davidjerison} and \cite{semmes_analysis}). This extends previous results by Jerison and Kenig \cite{jerisonkenig1} in the plane, and by Verchota \cite{Verchota} in Lipschitz domains.

Our goal is to revisit the assumptions of Theorem \ref{theorem:mourgogloutolsa} (b) by studying them from the point of view of $2$-sided chord-arc domains: we show that a $2$-sided local John domain with codimension $1$ Ahlfors--David regular boundary is a $2$-sided chord-arc domain, and the boundary of any $2$-sided chord-arc domain supports weak Poincar\'e inequalities. To be more precise, we prove the following two results:

\begin{theorem}
  \label{theorem:local_john_harnack}
  Suppose that $\Omega \subset \R^{n+1}$ is an open set with $n$-dimensional Ahlfors--David regular boundary that satisfies the local John condition and the exterior corkscrew condition (see Definition \ref{defin:2-sided_conditions} and Definition \ref{defin:corkscrew}). Then $\Omega$ also satisfies the Harnack chain condition (see Definition \ref{defin:nta}). In particular, a $2$-sided local John domain with codimension $1$ Ahlfors--David regular boundary is a $2$-sided chord-arc domain.
\end{theorem}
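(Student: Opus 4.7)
The plan is to reduce the proof to establishing uniform rectifiability of $\partial\Omega$, and then to invoke the Azzam--Hofmann--Martell--Nystr\"om--Toro characterization of $2$-sided NTA domains: if an open set $\Omega$ has codimension-$1$ ADR boundary, satisfies the $2$-sided corkscrew condition, and has uniformly rectifiable boundary, then $\Omega$ is $2$-sided NTA (in particular, satisfies the Harnack chain condition). Since the local John hypothesis trivially supplies interior corkscrews and the exterior corkscrew condition is assumed, the $2$-sided corkscrew condition is already in hand, and the entire problem collapses to proving that $\partial\Omega$ is uniformly rectifiable.

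The core step is therefore to show that local John plus exterior corkscrew plus ADR implies uniform rectifiability of $\partial\Omega$. I would approach this via the \emph{big pieces of Lipschitz subdomains} (BPLS) criterion of David--Semmes: at every surface ball $\Delta(x,r) = B(x,r) \cap \partial\Omega$, one constructs a Lipschitz subdomain $L_{x,r} \subset \Omega$ of diameter comparable to $r$ whose boundary trace covers a uniformly positive proportion of $\Delta(x,r)$. To build $L_{x,r}$, one uses a Whitney-type decomposition and a stopping-time argument to extract a carrot-like family of John paths from an interior corkscrew point of $B(x,r)$, the union of which forms a sawtooth region. The exterior corkscrew hypothesis then allows the regularization of this sawtooth into an honest Lipschitz subdomain, in the spirit of the Hofmann--Martell--Mayboroda sawtooth construction. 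The David--Semmes machinery then upgrades BPLS to uniform rectifiability of $\partial\Omega$.

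With UR in hand and $2$-sided corkscrews given, AHMNT delivers the Harnack chain condition for $\Omega$, so $\Omega$ is chord-arc. For the ``in particular'' statement, the $2$-sided local John condition gives local John and interior corkscrews on both sides of $\partial\Omega$, each side's interior corkscrew serving as the other side's exterior corkscrew; uniform rectifiability of $\partial\Omega$ is symmetric in the two sides. Applying the one-sided result to $\Omega$ and to each component of $\R^{n+1} \setminus \overline{\Omega}$ separately yields the $2$-sided chord-arc property.

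I expect the main obstacle to be the construction of the Lipschitz subdomain $L_{x,r}$ in a quantitative, scale-invariant way suitable for the David--Semmes criterion: controlling the Lipschitz character of the sawtooth, verifying its upper Ahlfors regularity, and ensuring that its boundary trace occupies a fixed positive fraction of $\Delta(x,r)$. The exterior corkscrew condition enters precisely here, to rule out pathologies such as $\Omega^c$ collapsing to a thin spike between John paths, which would otherwise destroy the Lipschitz bound on the sawtooth graph.
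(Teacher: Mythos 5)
Your proposed reduction is based on a misstatement of the Azzam--Hofmann--Martell--Nystr\"om--Toro result, and the gap is fatal. The implication you attribute to AHMNT --- that $n$-ADR boundary plus $2$-sided corkscrews plus UR boundary yields the Harnack chain condition --- is false. In fact, UR of $\pom$ already follows from the $2$-sided corkscrew condition and ADR alone (this is Theorem~\ref{theorem:corkscrew_ur}, the David--Jerison / Semmes result), so your ``core step'' has no content; and adding UR to $2$-sided corkscrews gains nothing. Example~\ref{example:local_local_john} in the paper, $\Omega = B(0,1) \cup \{X \in \R^2 : |X| > 2\}$, is a concrete counterexample to your claimed implication: it has ADR and UR boundary, satisfies the $2$-sided corkscrew condition, yet is disconnected and hence cannot satisfy the Harnack chain condition. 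What AHMNT actually proves is the converse-flavored statement: for $\Omega$ that is \emph{already} a $1$-sided NTA domain (i.e., interior corkscrews \emph{plus the Harnack chain condition}) with ADR boundary, UR of $\pom$ is equivalent to the exterior corkscrew condition, i.e.\ to $\Omega$ being NTA. The Harnack chain condition is part of the hypothesis there, not part of the conclusion, so it cannot be used to supply the very thing Theorem~\ref{theorem:local_john_harnack} is meant to establish.

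The tell-tale sign that something has gone wrong is that in your argument the local John condition does essentially no work beyond trivially furnishing interior corkscrews. If that were sufficient, the theorem would reduce to ``$2$-sided corkscrews $+$ ADR $\Rightarrow$ Harnack chains,'' which is false. The local John condition must be used to actually \emph{build} the Harnack chains. The paper's proof does exactly this: given $X, Y \in \Omega$, it picks a local John point $Z_0$ at a scale comparable to $\max\{\delta(X),\delta(Y),|X-Y|\}$, so the local John condition provides long non-tangential paths from boundary points to $Z_0$. Separately, the $2$-sided corkscrew hypothesis yields (via David--Jerison / Bennewitz--Lewis and Lemma~\ref{lemma:corkscrews_harmonic_measure}) that $\omega \in \text{weak-}A_\infty(\sigma)$, and then Lemma~\ref{lemma:connective_set} supplies short non-tangential paths from a large-measure set of boundary points near $X$ (resp.\ $Y$) up to $X$ (resp.\ $Y$). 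The difficulty is that the long John path and the short connective path share a boundary endpoint $\widetilde{z}_X$ and might approach it from geometrically incompatible directions; this is resolved by choosing $\widetilde{z}_X$ in a dyadic cube where the Bilateral Weak Geometric Lemma provides two-sided flatness, which forces both paths to exit on the same convex side of the approximating hyperplane at a uniformly positive distance from the boundary (Lemmas~\ref{lemma:components}--\ref{lemma:subball}). The two paths are then joined by a line segment and the resulting curve is covered by a controlled number of balls. If you want to salvage a proof, it is this connect-and-glue mechanism you must reconstruct; the BPLS/sawtooth machinery you sketch only reproves UR, which is already known and insufficient.
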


\begin{theorem}
  \label{theorem:boundary_poincare}
  Suppose that $\Omega \subset \R^{n+1}$ is a $2$-sided chord-arc domain. Then the following weak $1$-Poincar\'e inequality for Lipschitz functions on $\pom$ holds: there exist constants $C \ge 1$ and $\Lambda \ge 1$ such that for every Lipschitz function $f$ on $\pom$ and every $\Delta = \Delta(y,r) = B(y,r) \cap \pom$ we have
  \begin{align}
    \label{inequality:tan_poincare} \fint_\Delta |f(x) - \langle f \rangle_\Delta| \, d\sigma(x) \le C r \fint_{\Lambda \Delta} |\nablat f(x)| \, d\sigma(x),
  \end{align}
  where $\nablat f$ is the tangential gradient of $f$ (see Definition \ref{defin:tangential:gradient}), $\sigma \coloneqq \Hc^n|_{\pom}$ is the surface measure and $\langle f \rangle_\Delta \coloneqq \tfrac{1}{\sigma(\Delta)} \int_\Delta f(y) \, d\sigma(y)$ is the integral average of $f$ over $\Delta$.
\end{theorem}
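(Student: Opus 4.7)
Since $\Omega$ is a $2$-sided chord-arc domain, $\pom$ is uniformly rectifiable (by results of Azzam, and consistently with what is recalled in the excerpt). In particular, the approximate tangent plane exists $\sigma$-a.e., so $\nablat f$ is well defined for any Lipschitz $f$ on $\pom$, and along any rectifiable curve $\gamma\subset\pom$ parametrized by arclength the tangent vector to $\gamma$ lies in the approximate tangent plane at $\Hc^1$-a.e.\ point, so $|\nablat f|$ is a genuine upper gradient for $f$ along $\gamma$. My plan is then to verify \eqref{inequality:tan_poincare} via a Heinonen--Koskela--Semmes \emph{pencil of curves} sufficient condition: for every surface ball $\Delta=\Delta(y,r)$ I would pick a center $x_\Delta\in\Delta$ and, for $\sigma$-a.e.\ $x\in\Delta$, produce a rectifiable arc $\gamma_x\subset\Lambda\Delta\cap\pom$ joining $x$ to $x_\Delta$, of length at most $Cr$, satisfying the measure overlap bound
\begin{align*}
  \int_\Delta \Hc^1(\gamma_x\cap E)\, d\sigma(x)\le C\,r\,\sigma(E)\qquad \text{for every Borel } E\subset\Lambda\Delta.
\end{align*}
Integrating $|f(x)-f(x_\Delta)|\le \int_{\gamma_x}|\nablat f|\, d\Hc^1$ over $x\in\Delta$ and applying Fubini with this overlap bound would give $\fint_\Delta|f-f(x_\Delta)|\, d\sigma\lesssim r\fint_{\Lambda\Delta}|\nablat f|\, d\sigma$, from which \eqref{inequality:tan_poincare} follows by the triangle inequality.

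\textbf{Building the pencil.} I would construct the arcs scale by scale, using the full $2$-sided chord-arc structure. Fix a Christ-type dyadic system $\{Q\}$ on $\pom$ and the associated Whitney decomposition of $\Omega$, and attach to every cube $Q$ an interior corkscrew point $A_Q$. For $x\in\Delta$, let $Q_0(x)\subset Q_1(x)\subset\cdots\subset Q_N=Q_\Delta$ be its chain of dyadic ancestors, with $\ell(Q_k)\approx 2^k\ell(Q_0(x))$. The interior Harnack chain condition (available by the chord-arc hypothesis) gives, for each $k$, a polygonal path in $\Omega$ of diameter $\lesssim\ell(Q_{k+1})$ and uniformly bounded combinatorial length connecting $A_{Q_k(x)}$ to $A_{Q_{k+1}(x)}$. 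Projecting the concatenation of these interior paths onto $\pom$ by nearest point projection yields a finite sequence of boundary points with consecutive spacing controlled by the Whitney scales, and the uniform rectifiability of $\pom$ lets one interpolate consecutive pairs by rectifiable arcs of comparable length living inside big Lipschitz graph pieces of $\pom$. Gluing these together, and appending a short terminal arc from $x$ to its first projection, produces $\gamma_x\subset\Lambda\Delta\cap\pom$ of total length $\lesssim r$.

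\textbf{Main obstacle.} The crux is the overlap estimate, which reduces to bookkeeping the multiplicity with which $\{\gamma_x\}_{x\in\Delta}$ visits each dyadic region. At scale $2^{-k}r$ each $\gamma_x$ only traverses the portion of the construction attached to its single ancestor $Q_{N-k}(x)$, so the combinatorial multiplicity per surface cube at that scale is $O(1)$; combined with $\Hc^1(\gamma_x\cap Q)\lesssim\ell(Q)$ and $\sigma(Q)\sim\ell(Q)^n$ this yields a geometric-series estimate of the desired form. The delicate step is the uniformly rectifiable interpolation: nearest-point projection on a codimension-$1$ UR set is not Lipschitz in general, so one must instead patch the interpolating arcs inside big Lipschitz graph pieces produced by the bilateral weak geometric lemma, while keeping careful control on how often the patching switches between distinct graph pieces at a given scale. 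This is where the $2$-sided chord-arc hypothesis is decisive: both interior and exterior corkscrew-plus-Harnack-chain conditions force a uniformly large proportion of dyadic cubes at every scale to admit bilateral Lipschitz graph approximations with uniform constants, and this two-sided quantitative flatness is what ultimately delivers the overlap bound and thus \eqref{inequality:tan_poincare}.
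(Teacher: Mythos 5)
Your proposal follows the Semmes ``pencil of curves'' route, which is genuinely different from the paper's argument. The paper instead uses the layer-potential machinery of Hofmann--Mitrea--Taylor: it writes $f = \Dc^+ f - \Dc^- f$ in terms of the double layer potential, chooses interior and exterior local John points as base points, controls the oscillation by a mean value argument along non-tangential paths, and thereby establishes a weak-type $(1,1)$ Poincar\'e inequality with a tail (Lemma \ref{lemma:hmt1_poincare}). A truncation and localization argument then yields the pointwise estimate required by the characterization in Theorem \ref{theorem:poincare_pointwise}, and Keith's theorem (Theorem \ref{theorem:keith}) upgrades the Lipschitz statement to the full Heinonen--Koskela inequality. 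No curve family in $\pom$ is ever constructed directly; in fact the quasiconvexity of $\pom$ is derived as a \emph{consequence} of the Poincar\'e estimates in Lemma \ref{lemma:boundary_quasiconvex}, not used as an input.

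There are two genuine gaps in your proposal. First, the construction of the pencil of curves is not actually carried out; the step you flag as ``delicate'' is precisely the hard part. You cannot interpolate along ``big Lipschitz graph pieces of $\pom$'' in the sense you describe: the big-pieces condition (and BWGL) only say that a large proportion of $\pom$ in $B_Q$ lies near a Lipschitz graph or hyperplane $L$, but the Lipschitz graph is not a subset of $\pom$, and nearby boundary points need not be joinable by a short arc \emph{inside} $\pom$ in general at this level of generality. In fact, the very existence of such a controlled curve family is essentially equivalent to the quasiconvexity/Loewner properties which the paper obtains only \emph{after} establishing the Poincar\'e inequality (Corollaries \ref{corollary:poincare} and \ref{corollary:annular_quasiconvexity}). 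So the argument is circular unless you can construct the curves from scratch, and the two-sided flatness you invoke is too weak to do this: BWGL still leaves a non-empty Carleson-packed family of bad cubes at every scale, and you give no mechanism for routing $\gamma_x$ around them while staying in $\pom$ with the required length and overlap bounds.

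Second, even granting the curve family, the inequality $|f(x)-f(x_\Delta)|\le \int_{\gamma_x}|\nablat f|\,d\Hc^1$ is not free. The tangential gradient exists only $\sigma$-a.e., and the set where it fails to exist (or fails to control the oscillation of $f$) is $\sigma$-null but could still have positive $\Hc^1$-measure on an individual curve $\gamma_x$. You would need to show that, for a.e.\ $x$, the curve $\gamma_x$ can be chosen so that $\Hc^1$-a.e.\ point of $\gamma_x$ is a differentiability point of $f$ in the needed sense and the tangent of $\gamma_x$ lies in the approximate tangent plane of $\pom$ there; this is a nontrivial Fubini-type argument that the proposal does not address. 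The paper sidesteps this entirely by reducing to a pointwise inequality between boundary points (Lemma \ref{lemma:pointwise_bound}) where only the $\sigma$-a.e.\ behavior of $\nablat f$ matters, and then invoking Keith's characterization for the upgrade to upper gradients.
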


We note that the conclusion of Theorem \ref{theorem:local_john_harnack} holds if the local John condition is ``good enough'' in the sense that a $D_0$-local John condition implies the Harnack chain condition but a $(D_0,R_0)$-local John condition for $R_0 = c \cdot \diam(\pom)$ and $c < 1$ small enough does not, if $\diam(\pom) < \infty$. See the definitions and discussion in Section \ref{section:john_conditions}.

Theorem \ref{theorem:boundary_poincare} and its consequence Corollary \ref{corollary:poincare} improve some results in the literature. Earlier, Semmes proved that a weak $2$-Poincar\'e inequality for the tangential gradient (that is, inequality \eqref{inequality:tan_poincare} with the right hand side replaced by $C r (\fint_{\Lambda \Delta} |\nablat f(x)|^2 \, d\sigma(x))^{1/2}$) holds for smooth functions on any \emph{chord-arc surface with small constant} \cite[Lemma 1.1]{semmes_hypersurfaces} (see the introduction in \cite{semmes_hypersurfaces} for the definition of these surfaces). Theorem \ref{theorem:boundary_poincare} both provides a stronger inequality and generalizes the class of surfaces considered by Semmes. A key element in the proof of Theorem \ref{theorem:boundary_poincare} is the machinery built by Hofmann, Mitrea and Taylor in \cite{hofmannmitreataylor}. In their paper, they prove a weak $(p,p)$-Poincar\'e inequality with a tail for the Hofmann--Mitrea--Taylor Sobolev space $L_1^p(\pom)$ with respect to the Hofmann--Mitrea--Taylor gradient (see Definition \ref{defin:hmt_gradient}) for any $1 < p < \infty$ on boundaries of $2$-sided local John domains \cite[Proposition 4.13]{hofmannmitreataylor}. Combining Theorem \ref{theorem:boundary_poincare} with some density results in \cite{hofmannmitreataylor} and tools in \cite{mourgogloutolsa2} shows us that the tail in their inequality can be removed, at least when $\Omega$ is a bounded $2$-sided chord-arc domain (see Corollary \ref{corollary:poincare_for_hmt_gradient}).

Our results have some immediate consequences. First, we note that since chord-arc domains satisfy the local John condition, Theorem \ref{theorem:local_john_harnack} gives us the following characterization result:

\begin{corollary}
  \label{corollary:local_john_cad}
  Let $\Omega \subset \R^{n+1}$ be an open set satisfying a $2$-sided corkscrew condition, with $n$-dimensional Ahlfors--David regular boundary $\pom$. Then the following conditions are equivalent:
  \begin{enumerate}
    \item[(a)] $\Omega$ satisfies the local John condition,
    \item[(b)] $\Omega$ satisfies the Harnack chain condition.
  \end{enumerate}
\end{corollary}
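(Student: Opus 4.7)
The plan is to prove the two implications separately. The forward direction (a) $\Rightarrow$ (b) will be a direct application of Theorem \ref{theorem:local_john_harnack}, while the reverse direction (b) $\Rightarrow$ (a) will follow from standard properties of chord-arc domains.

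For (a) $\Rightarrow$ (b), the observation is simply that the $2$-sided corkscrew condition includes the exterior corkscrew condition; combined with the ADR assumption on $\pom$ and the local John hypothesis in (a), all the hypotheses of Theorem \ref{theorem:local_john_harnack} are in place. Invoking that theorem directly yields the Harnack chain condition.

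For (b) $\Rightarrow$ (a), the ADR boundary, $2$-sided corkscrew, and Harnack chain conditions together say, by definition, that $\Omega$ is a $2$-sided chord-arc domain. It then remains to verify that chord-arc domains satisfy the local John condition, which is well-known. My plan is to give the standard construction: given a boundary point $y \in \pom$ and a scale $r > 0$, pick an interior corkscrew point $x$ associated with $\Delta(y,r)$; then for each $y' \in \Delta(y,r)$, build a John path from $x$ to $y'$ by concatenating Harnack chains between corkscrew points taken at dyadically decreasing scales that accumulate at $y'$. The non-tangential distance along each such path is controlled by the corkscrew and Harnack chain parameters, yielding a John path with constants depending only on the chord-arc parameters of $\Omega$.

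Thus the serious content of the corollary is entirely contained in Theorem \ref{theorem:local_john_harnack}; the reverse implication is a straightforward unpacking of definitions together with a classical construction. Consequently, I do not anticipate any genuinely new obstacle at the level of the corollary itself — the main difficulty lies in the proof of Theorem \ref{theorem:local_john_harnack}, not here.
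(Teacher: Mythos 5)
Your proposal is correct and follows the same route as the paper: the implication (a)~$\Rightarrow$~(b) is an immediate application of Theorem~\ref{theorem:local_john_harnack}, and (b)~$\Rightarrow$~(a) is the standard fact that chord-arc domains satisfy the local John condition, for which the paper simply cites \cite[Lemma~3.13]{hofmannmitreataylor} whereas you sketch the classical construction (interior corkscrew points at dyadic scales joined by Harnack chains) that underlies that lemma.
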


For recent related results for semi-uniform domains and chord-arc domains, see \cite{azzam_semi-uniform} and \cite{azzametal2}.

Second, Theorem \ref{theorem:boundary_poincare} combined with a Lipschitz characterization of Poincar\'e inequalities \cite[Theorem 2]{keith} gives us the following Heinonen--Koskela type weak $1$-Poincar\'e inequality:

\begin{corollary}
  \label{corollary:poincare}
  Let $\Omega \subset \R^{n+1}$ be a $2$-sided chord-arc domain. There exist constants $C \ge 1$ and $\Lambda \ge 1$ such that for every $\Delta = \Delta(y,r)$ we have
  \begin{align}
    \label{inequality:1-poincare} \fint_{\Delta} |f(x) - \langle f \rangle_\Delta| \, d\sigma(x) \le C r \fint_{\Lambda \Delta} \rho(x) \, d\sigma(x),
  \end{align}
  for any $f \in L^1_{\loc}(\pom)$ and any upper gradient $\rho$ of $f$ (see Definition \ref{defin:upper_gradient}), where $\sigma \coloneqq \Hc^n|_{\pom}$ is the surface measure.
\end{corollary}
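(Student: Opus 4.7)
The plan is to deduce the corollary from Theorem \ref{theorem:boundary_poincare} by means of the Lipschitz characterization of Poincar\'e inequalities of Keith \cite[Theorem 2]{keith}. That result asserts that in a complete doubling metric measure space, the weak $(1,1)$-Poincar\'e inequality for general $L^1_{\loc}$ functions with arbitrary upper gradients is implied by (and in fact equivalent to) the analogous inequality restricted to Lipschitz functions $f$ with the pointwise Lipschitz constant $\Lip f$ in place of the upper gradient.

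First I would verify that $(\pom, |\cdot|, \sigma)$ is a complete doubling metric measure space; this is immediate from the Ahlfors--David regularity of $\pom$, so we are indeed in Keith's setting. The bulk of the work is then to recast Theorem \ref{theorem:boundary_poincare} as a Lipschitz Poincar\'e inequality of the form required by Keith, that is, one with $\Lip f$ on the right-hand side. This reduces to the pointwise comparison
\[
  |\nablat f(x)| \le \Lip f(x) \qquad \text{for } \sigma\text{-a.e. } x \in \pom,
\]
valid for every Lipschitz $f$ on $\pom$. Granted this bound, Theorem \ref{theorem:boundary_poincare} immediately yields the Lipschitz Poincar\'e inequality with $\Lip f$ on the right, and \cite[Theorem 2]{keith} upgrades it to \eqref{inequality:1-poincare} for every $f \in L^1_{\loc}(\pom)$ and every upper gradient $\rho$ of $f$, at the cost of possibly enlarging $\Lambda$, which is harmless.

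The main obstacle I foresee is the pointwise comparison above; the precise argument depends on the definition of the tangential gradient adopted in the paper. Since $\pom$ is uniformly rectifiable, approximate tangent $n$-planes $T_x$ exist at $\sigma$-a.e. $x \in \pom$. Extending $f$ to a Lipschitz function $F$ on $\ree$ via McShane (without increasing the Lipschitz constant) and recalling that $\nablat f(x)$ is, at $\sigma$-a.e. $x$, the orthogonal projection of $\nabla F(x)$ onto $T_x$, one is reduced to showing that this projection has norm at most $\Lip f(x)$. This in turn follows from the fact that every direction in $T_x$ is arbitrarily well approximated by secants of $\pom$ at $x$, so that any directional derivative of $F$ along a vector tangent to $\pom$ at $x$ may be computed as a limit of difference quotients of $f$ alone, and is therefore bounded in modulus by $\Lip f(x)$.
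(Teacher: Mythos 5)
Your overall route is the same as the paper's: apply Theorem \ref{theorem:boundary_poincare} (established for compactly supported Lipschitz $f$) to get a weak $1$-Poincar\'e inequality with $|\nablat f|$ on the right, then pass through Keith's characterization \cite[Theorem 2]{keith} to reach the Heinonen--Koskela form for general $f\in L^1_{\loc}$ and arbitrary upper gradients. The only place you deviate is the pointwise comparison between $|\nablat f|$ and $\Lip f$: the paper simply invokes Lemma \ref{lemma:norm_of_tangential_gradient} (quoted from \cite[Lemma~2.2]{mourgogloutolsa2}), which asserts the full equality $|\nablat f(x)|=\Lip f(x)$ for $\sigma$-a.e.~$x$, whereas you attempt to re-derive the one-sided inequality $|\nablat f(x)|\le\Lip f(x)$ from scratch. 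Since only that inequality is needed to pass from Theorem \ref{theorem:boundary_poincare} to Keith's hypothesis, the logic closes either way.

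Two technical points in your re-derivation are worth tightening. First, you write that $\nablat f(x)$ is ``the orthogonal projection of $\nabla F(x)$ onto $T_x$,'' but $F$ is a Lipschitz extension to $\R^{n+1}$ and $\pom$ is Lebesgue-null, so Rademacher gives nothing at points of $\pom$: $\nabla F(x)$ need not exist there. The correct formulation (matching Definition \ref{defin:tangential:gradient}) is that $\nablat f(x)$ is the derivative at $x$ of the restriction $F|_{x+T_x\pom}$, which is an $n$-dimensional affine-plane derivative; your directional-derivative computation should be run directly against that object rather than against a (possibly nonexistent) full gradient. Second, the assertion that ``every direction in $T_x$ is arbitrarily well approximated by secants of $\pom$ at $x$'' is the crux and deserves a citation or a proof: the approximate tangent plane in the sense of \cite[Theorem~10.2]{maggi} is a measure-theoretic blow-up statement, and one must argue (using e.g.\ the density-one property at a.e.~point of a rectifiable set, together with Ahlfors--David regularity) that for a.e.~$x$ one can in fact select $y_t\in\pom$ with $|y_t-(x+tv)|=o(t)$ for the relevant direction $v$. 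This is exactly the content of \cite[Lemma~2.2]{mourgogloutolsa2}, so in practice you are reproving that lemma; citing it, as the paper does, is the cleaner route, though your sketch is correct in spirit once those two points are addressed.
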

We note that this weak $1$-Poincar\'e inequality implies a weak $p$-Poincar\'e inequality (that is, inequality \eqref{inequality:1-poincare} with the right hand side replaced by $C r (\fint_{\Lambda \Delta} |\rho(x)|^p \, d\sigma(x))^{1/p}$) for any $1 < p < \infty$ by H\"older's inequality. Furthermore, by \cite[Corollary 9.14]{heinonenetal}, these weak $p$-Poincar\'e inequalities imply weak $(p,p)$-Poincar\'e inequalities of the type
\begin{align*}
  \left( \fint_{\Delta} |f(x) - \langle f \rangle_\Delta|^p \, d\sigma(x) \right)^{1/p} \le \widetilde{C} r \left( \fint_{\Lambda \Delta} |\rho(x)|^p \, d\sigma(x) \right)^{1/p},
\end{align*}
where we used the same notation as in Corollary \ref{corollary:poincare}. See also Corollary \ref{corollary:poincare_for_hmt_gradient} for an inequality of this type for the Hofmann--Mitrea--Taylor Sobolev spaces in bounded $2$-sided chord-arc domains. We also note that the conclusion of Corollary \ref{corollary:poincare} is the inequality appearing in Theorem \ref{theorem:mourgogloutolsa}. It is natural to ask if Corollary \ref{corollary:poincare} can be strengthened into a characterization but this is not possible: there exist non-chord-arc domain open sets $\Omega \subset \R^{n+1}$ with $n$-dimensional Ahlfors--David regular boundaries $\pom$ that support weak Poincar\'e inequalities. See Section \ref{section:questions} for an example and discussion. However, we point out that a result to the converse direction was recently proven by Azzam who showed that weak Poincar\'e inequalities imply uniform rectifiability (see Definition \ref{defin:ur}) for Ahlfors--David regular sets $E \subset \R^{n+1}$ \cite{azzam_poincare}.

Furthermore, Corollary \ref{corollary:poincare} combined with the work of Heinonen and Koskela \cite[Theorem 5.7]{heinonenkoskela}, Korte \cite[Theorem 3.3]{korte}, and Cheeger \cite[Theorem 17.1]{cheeger} (see also e.g. \cite{merhej} and \cite{heinonenetal}) immediately give us the following two new results about the geometric structure of boundaries of $2$-sided chord-arc domains:

\begin{corollary}
  Let $\Omega \subset \R^{n+1}$ be a $2$-sided chord-arc domain. Then $\pom$ is a Loewner space (see Definition \ref{defin:lowener}).
\end{corollary}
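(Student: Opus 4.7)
The plan is a direct chain of citations, with all the heavy lifting already carried out in Corollary \ref{corollary:poincare} and the classical metric-Poincar\'e literature. The corollary feeds a weak $1$-Poincar\'e inequality into the Heinonen--Koskela characterization of Loewner spaces, and nothing further of substance is required.

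First, I would record the metric measure setup: as $\Omega$ is a $2$-sided chord-arc domain, $(\pom, |\cdot|, \sigma)$ with $\sigma = \Hc^n|_{\pom}$ is a complete, proper, Ahlfors $n$-regular metric measure space (completeness is automatic since $\pom$ is closed in $\R^{n+1}$). Second, by Corollary \ref{corollary:poincare}, $\pom$ supports a weak $1$-Poincar\'e inequality; applying H\"older's inequality to the right-hand side of \eqref{inequality:1-poincare} promotes this to a weak $(1,n)$-Poincar\'e inequality, i.e.\ the endpoint version of the Heinonen--Koskela inequality matched to the Ahlfors regularity exponent.

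Third, I would invoke \cite[Theorem 5.7]{heinonenkoskela}, supplemented by \cite[Theorem 3.3]{korte} and \cite[Theorem 17.1]{cheeger}: a complete Ahlfors $Q$-regular metric measure space (with $Q>1$) supporting a weak $(1,Q)$-Poincar\'e inequality is $Q$-Loewner. Korte's theorem supplies the quasiconvexity of $\pom$ that is needed as an input in some formulations of this implication (the Poincar\'e inequality and Ahlfors regularity together force pairs of points in $\pom$ to be joinable by a rectifiable curve of comparable length), and Cheeger's result yields the compatible differentiable structure; the Heinonen--Koskela machinery then converts the $(1,Q)$-Poincar\'e inequality into the required quantitative lower bound on the $Q$-modulus of curve families joining disjoint continua, which is the $Q$-Loewner property. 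Applied with $Q = n$, this finishes the proof.

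There is no real obstacle in this argument: the only care needed is to match the integrability exponent in the Poincar\'e inequality to the Ahlfors regularity exponent, and this is handled by a single application of H\"older. All genuinely new content sits in Corollary \ref{corollary:poincare} (equivalently, Theorem \ref{theorem:boundary_poincare}); the present statement is its geometric reformulation via the well-developed metric Poincar\'e theory.
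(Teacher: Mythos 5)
Your proposal is correct and is essentially identical to the paper's proof: the paper also obtains the Loewner property by feeding Corollary \ref{corollary:poincare} (upgraded by H\"older to a weak $(1,n)$-Poincar\'e inequality) into \cite[Theorem 5.7]{heinonenkoskela}, with \cite[Theorem 3.3]{korte} and \cite[Theorem 17.1]{cheeger} supplying the quasiconvexity/connectivity hypotheses needed as input. The only slight imprecision in your writeup is the attribution to Cheeger's Theorem 17.1 of a ``compatible differentiable structure''; the paper cites that result for quasiconvexity/connectivity (the differentiation theorem is a separate part of Cheeger's paper), but this does not affect the substance of the argument.
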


Examples of Loewner spaces include the Euclidean space, Carnot groups and Riemannian manifolds of nonnegative Ricci curvature \cite[Section 6]{heinonenkoskela}. For other examples, see \cite[Section 6]{heinonenkoskela} and \cite[Section 14.2]{heinonenetal}.

\begin{corollary}
  \label{corollary:annular_quasiconvexity}
  Let $\Omega \subset \R^{n+1}$, be a $2$-sided chord-arc domain. Then
  \begin{enumerate}
    \item[$\bullet$] if $n=1$, then $\pom$ is quasiconvex (see Definition \ref{defin:quantitative_connectivity}),
    \item[$\bullet$] if $n>1$, then $\pom$ is annularly quasiconvex (see Definition \ref{defin:quantitative_connectivity}).
  \end{enumerate}
\end{corollary}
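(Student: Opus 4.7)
The plan is to deduce this corollary by combining Corollary~\ref{corollary:poincare} with classical abstract results on Poincar\'e inequalities in metric measure spaces. The starting point is the observation that $(\partial\Omega, |\cdot|, \sigma)$, with $\sigma = \Hc^n|_{\partial\Omega}$, is a complete doubling metric measure space: $\partial\Omega$ is closed in $\R^{n+1}$ and therefore complete, while the doubling property is immediate from $n$-dimensional Ahlfors--David regularity. Corollary~\ref{corollary:poincare} then supplies a weak $(1,1)$-Poincar\'e inequality on this space in the Heinonen--Koskela sense, which is the key input for what follows.

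For $n = 1$, I would invoke the classical principle that a complete doubling metric measure space supporting a weak $(1,p)$-Poincar\'e inequality is quasiconvex; this is Cheeger's Theorem~17.1 in \cite{cheeger}, with alternative approaches available through \cite{heinonenkoskela}. The underlying argument builds connecting curves of controlled length by using upper-gradient estimates to show that the intrinsic length metric is comparable with the ambient one, so the quasiconvexity constant depends only on the doubling and Poincar\'e constants of $\partial\Omega$.

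For $n > 1$, I would apply Korte's Theorem~3.3 of \cite{korte}, which asserts that a complete Ahlfors $Q$-regular metric measure space with $Q > 1$ supporting a weak $(1,p)$-Poincar\'e inequality is annularly quasiconvex. Since $\partial\Omega$ is $n$-regular with $n \ge 2$, this applies directly. The dimension hypothesis $Q > 1$ is genuinely needed here: it guarantees that annuli carry enough measure for modulus/capacity estimates to produce curves that avoid a central ball while remaining short, and this mechanism fails when $Q = 1$, which is precisely why the two cases in the statement differ. The main ``obstacle'' in writing this up is purely bookkeeping: verifying that the dilation factor $\Lambda$ in Corollary~\ref{corollary:poincare} and the precise formulation of the weak Poincar\'e inequality used in \cite{cheeger} and \cite{korte} are compatible, which is routine in the metric framework.
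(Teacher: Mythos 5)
Your proposal matches the paper's intended derivation exactly: the authors state that Corollary~\ref{corollary:annular_quasiconvexity} follows ``immediately'' from Corollary~\ref{corollary:poincare} combined with Cheeger \cite[Theorem 17.1]{cheeger} (for the $n=1$ quasiconvexity case) and Korte \cite[Theorem 3.3]{korte} (for the $n>1$ annular quasiconvexity case), which is precisely the route you take. Your remarks on completeness, doubling, the role of the regularity exponent $Q>1$ in Korte's theorem, and the compatibility of the Heinonen--Koskela formulation across references are all correct and in line with what the paper tacitly assumes.
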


We note that the case $n=1$ in Corollary \ref{corollary:annular_quasiconvexity} cannot be improved to annular quasiconvexity: $\Omega = B(0,1)$ (the unit disc) is a $2$-sided chord-arc domain but for any $z \in \pom$ and any $0 < r < \tfrac{1}{2}$ there exist points $x,y \in B(z,2r) \setminus B(z,r)$ such that $x$ and $y$ can be joined in $\pom \setminus \{z\}$ only with paths $\gamma$ such that $\ell(\gamma) \ge 1$.

The proofs of Theorem \ref{theorem:local_john_harnack}, Theorem \ref{theorem:boundary_poincare} and Corollary \ref{corollary:poincare} utilize significant advances in geometric analysis over the past 25 years. For Theorem \ref{theorem:local_john_harnack}, we use harmonic measure theory (particularly the very recent results of Azzam, Hofmann, Martell, Mourgoglou and the second named author \cite{azzametal1}) and uniform rectifiability techniques (particularly the Bilateral Weak Geometric Lemma of David and Semmes \cite{davidsemmes_analysis}). For Theorem \ref{theorem:boundary_poincare} and Corollary \ref{corollary:poincare}, we combine layer potential techniques of Hofmann, Mitrea and Taylor \cite{hofmannmitreataylor} and pointwise and Lipschitz characterizations of Poincar\'e inequalities of Heinonen and Koskela \cite{heinonenkoskela} and Keith \cite{keith} (see also \cite{heinonen}) with suitable localization and truncation arguments. One of the novelties in the proof of Theorem \ref{theorem:boundary_poincare} is the use of a weak type $(1,1)$ version of a weak Poincar\'e inequality with a tail, analogous to the strong type $(p,p)$ version proved previously in \cite{hofmannmitreataylor}.

The paper is organized as follows. In Section \ref{section:notation} we fix the basic notation, review the numerous definitions needed in the paper and consider some auxiliary results from the literature. In Section \ref{section:john_conditions}, we define three different John type conditions and compare them. In Section \ref{section:bwgl}, we consider the Bilateral Weak Geometric Lemma of David and Semmes and prove some straightforward related results for the proof of Theorem \ref{theorem:local_john_harnack}, and in Section \ref{section:local_john_harnack} we prove Theorem \ref{theorem:local_john_harnack}. In Section \ref{section:hmt_and_quasiconvex}, we consider the Hofmann--Mitrea--Taylor type weak $p$-Poincar\'e inequality with a tail for $1 < p < \infty$ and use techniques from its proof to prove a weak type estimate for the case $p=1$. In Section \ref{section:poincare}, we use this weak type estimate together with some key results from the theory of Poincar\'e inequalities in metric spaces to prove Theorem \ref{theorem:boundary_poincare} and Corollary \ref{corollary:poincare}. In Section \ref{section:questions}, we end the paper by constructing an example that shows us that the assumptions in Corollary \ref{corollary:poincare} are not optimal and consider some questions related to this work.

\subsection*{Acknowledgments}

The authors would like to thank Steve Hofmann for kindly answering their questions related to the work in \cite{hofmannmitreataylor} and for his questions and comments that ended up improving and sharpening the presentation of this paper. They would also like to thank Sylvester Eriksson-Bique and Pekka Koskela for their helpful comments related open problems in the field of Poincar\'e inequalities. The second named author would also like to thank Mihalis Mourgoglou for helpful conversations particularly related to Example \ref{example:non-2-sided-cad_poincare}. Part of this work was completed during the ``Interactions between Geometric measure theory, Singular integrals, and PDE'' Trimester Program at the Hausdorff Research Institute for Mathematics (HIM) in the spring 2022. The authors would like to thank the organizers of the program and the people at HIM for the kind hospitality shown during their stay.

\section{Notation, basic definitions and auxiliary results}
\label{section:notation}

\begin{defin}
  \label{defin:2-sided_conditions}
  Let $\Omega \subset \R^{n+1}$ be an open set and consider a geometric condition (such as the local John condition (see Definition \ref{defin:john_conditions})). If the interior of $\Omega^{\text{c}}$ satisfies the condition, we say that $\Omega$ satisfies the \emph{exterior} version of the condition. If both $\Omega$ and the interior of $\Omega^{\text{c}}$ satisfy the same condition, we say that $\Omega$ satisfies the \emph{$2$-sided} version of the same condition.
\end{defin}

We use the following basic notation and terminology:
\begin{enumerate}
  \item[$\bullet$] $\Omega \subset \R^{n+1}$ is an open set with $n$-dimensional boundary $\pom$. We denote the surface measure of $\pom$ by $\sigma \coloneqq \Hc^n|_{\pom}$. Unless explicitly mentioned, we assume that $\pom$ is Ahlfors--David regular (see Definition \ref{defin:adr}) and that both $\Omega$ and $\text{int} \, \Omega^{\text{c}}$ satisfies the corkscrew condition (that is, $\Omega$ satisfies the $2$-sided corkscrew condition; see Definition \ref{defin:2-sided_conditions} and Definition \ref{defin:corkscrew}).
  
  \item[$\bullet$] Usually, we use capital letters $X,Y,Z$, and so on to denote points in $\Omega$, and lowercase letters $x,y,z$, and so on to denote points in $\pom$.
  
  \item[$\bullet$] For every point $X \in \R^{n+1}$, we deonte $\delta(X) \coloneqq \dist(X,\pom)$.
  
  \item[$\bullet$] We denote the open $(n+1)$-dimensional Euclidean ball with radius $r > 0$ by $B(X,r)$ or $B(x,r)$, depending on whether the center point lies in $\Omega$ or $\pom$. For any $x \in \pom$ and any $r > 0$, we denote the surface ball centered at $x$ with radius $r$ by $\Delta(x,r) \coloneqq B(x,r) \cap \pom$.
  
  \item[$\bullet$] Given an Euclidean ball $B \coloneqq B(X,r)$ or a surface ball $\Delta \coloneqq \Delta(x,r)$ and a constant $\kappa > 0$, we denote $\kappa B \coloneqq B(X,\kappa r)$ and $\kappa \Delta \coloneqq \Delta(x,\kappa r)$.
  
  \item[$\bullet$] For a metric measure space $(X,d,\mu)$, a function $f$ and an open ball $B$, we denote the average of $f$ over $B$ by
  \begin{align*}
    \langle f \rangle_B \coloneqq \fint_B f \, d\mu \coloneqq \frac{1}{\mu(B)} \int_B f \, d\mu.
  \end{align*}

  \item[$\bullet$] A \emph{path} is a continuous function $\gamma \colon [0,1] \to X$, where $X$ is a metric space. With slight abuse of terminology, we call a path $\gamma \colon [0,1] \to \overline{\Omega}$ a \emph{path in $\Omega$} if $\gamma(t) \in \Omega$ for every $t \in (0,1]$. With slight abuse of notation, we denote $Z \in \gamma$ if there exists $t \in [0,1]$ such that $\gamma(t) = Z$. We say that a path $\gamma$ is \emph{from $X_1$ to $X_2$} if $\gamma(0) = X_1$ and $\gamma(1) = X_2$.
  
  \item[$\bullet$] The \emph{length of a path} $\gamma \colon [0,1] \to \overline{\Omega}$ is defined as
  \begin{align*}
    \ell(\gamma) \coloneqq \sup \left\{ \sum_{i=0}^k \left|\gamma(t_i) - \gamma(t_{i+1})\right| \right\},
  \end{align*}
  where the supremum is taken over all finite partitions $0 = t_0 < t_1 < \cdots < t_k = 1$ of the interval $[0,1]$. We say that a path $\gamma$ is \emph{rectifiable} if the length of $\gamma$ is finite.
  
  \item[$\bullet$] Given a rectifiable path $\gamma$ and a function $f$, we denote the \emph{arc-length parametrization of $\gamma$} (that is, the re-parametrization of $\gamma$ with respect to $\ell(\gamma)$) by $\gamma_\ell \colon [0,\ell(\gamma)] \to \R^{n+1}$ and the integral of $f$ over $\gamma$ by
  \begin{align*}
    \int_\gamma f \coloneqq \int_0^{\ell(\gamma)} f \circ \gamma_\ell(t) \, dt.
  \end{align*}

  \item[$\bullet$] For a path $\gamma$ and points $X_1, X_2 \in \gamma$ with $\gamma(t) = X_1$ and $\gamma(s) = X_2$ for $t,s \in [0,1]$, $t < s$, we denote the piece of $\gamma$ from $X_1$ to $X_2$ by $\gamma(X_1,X_2)$ and its length by $\ell(\gamma(X_1,X_2))$. Again, with slight abuse of notation, we denote $Z \in \gamma(X_1,X_2)$ if there exists $u \in (s,t)$ such that $\gamma(u) = Z$.
  
  \item[$\bullet$] We denote harmonic measure with pole at $X \in \Omega$ by $\omega^X$. Usually, we drop the pole from the notation if we consider properties that hold for every $X \in \Omega$.
  
  \item[$\bullet$] Let $A \subset \R^{n+1}$, $f \colon A \to \R$, $\alpha > 0$ and $\beta \ge 1$. We say that $f$ is \emph{$\alpha$-Lipschitz} if
  \begin{align*}
    |f(x) - f(y)| \le \alpha|x-y|
  \end{align*}
  for all $x,y \in A$. We say that $f$ is \emph{locally $\alpha$-Lipschitz} if
  \begin{align}
    \label{defin:locally_lipschitz} \limsup_{\substack{A \ni y \to x \\ y \neq x}} \frac{|f(x) - f(y)|}{|x-y|} \le \alpha
  \end{align}
  for every $x \in A$. We say that $f$ is \emph{$\beta$-bi-Lipschitz} if
  \begin{align*}
    \frac{1}{\beta} |x-y| \le |f(x) - f(y)| \le \beta|x-y|
  \end{align*}
  for all $x,y \in A$.
  
  \item[$\bullet$] We denote the \emph{measure-theoretic boundary of $\Omega$} by $\partial_* \Omega$: we have $x \in \partial_* \Omega$ if and only if $x \in \pom$, and
  \begin{align}
    \label{defin:measure-theoretic_boundary}
    \liminf_{r \to 0^+} \frac{|B(x,r) \cap \Omega|}{r^{n+1}} > 0
    \ \ \ \text{ and } \ \ \ 
    \liminf_{r \to 0^+} \frac{|B(x,r) \setminus \overline{\Omega}|}{r^{n+1}} > 0.
  \end{align}
  
  \item[$\bullet$] For any $p > 1$, we denote the \emph{H\"older conjugate of $p$} by $p'$. The numbers $p$ and $p'$ satisfy $1/p + 1/p' = 1$.
  
  \item[$\bullet$] We denote the \emph{non-tangential maximal operator} by $N_*$: for a function $u$ in $\Omega$, $N_* u$ is a function $\pom$ defined as
  \begin{align}
    \label{defin:non-tangential_max} N_* u(x) \coloneqq \sup_{Y \in \Gamma_\alpha(x)} |u(Y)|,
  \end{align}
  where $\Gamma_\alpha(x)$ is the \emph{cone at $x \in \pom$ with aperture $\alpha$},
  \begin{align}
    \label{defin:cone} \Gamma_\alpha(x) \coloneqq \{Y \in \R^{n+1} \colon |x-Y| < \alpha \, \delta(Y)\}.
  \end{align}
  We say that a function $u$ in $\Omega$ \emph{converges non-tangentially} to a function $f$ on $\pom$ if $u(Y) \to f(x)$ as $Y \to x$ inside $\Gamma_\alpha(x)$.
  
  \item[$\bullet$] The letters $c$ and $C$ and their ovious variations denote constants that depend only on dimension, ADR constant (see Definition \ref{defin:adr}), UR constants (see Definition \ref{defin:ur}) and other similar parameters. The values of $c$ and $C$ may change from one occurence to another. We do not track how our bounds depend on these constants and usually just write $\alpha_1 \lesssim \alpha_2$ if $\alpha_1 \le c \, \alpha_2$ for a constant like this $c$ and $\alpha_1 \approx \alpha_2$ if $\alpha_1 \lesssim \alpha_2 \lesssim \alpha_1$. If the constant $c_\kappa$ depends only on parameters of the previous type and some other parameter $\kappa$, we usually write $\alpha_1 \lesssim_\kappa \alpha_2$ instead of $\alpha_1 \le c_\kappa \alpha_2$.
\end{enumerate}

\subsection{ADR, UR, NTA, CAD, and corkscrew condition}

\begin{defin}[ADR]
  \label{defin:adr}
  We say that a closed set $E \subset \R^{n+1}$ is a \emph{$d$-ADR (Ahlfors--David regular)} set if there exists a constant $D \ge 1$ such that
  \begin{align*}
    \frac{1}{D} r^d \le \Hc^d(B(x,r) \cap E) \le D r^d
  \end{align*}
  for every $x \in E$ and every $r \in (0,\diam(E))$, where $\diam(E)$ may be infinite.
\end{defin}

\begin{defin}[Corkscrew condition]
  \label{defin:corkscrew}
  We say that $\Omega$ satisfies the \emph{corkscrew condition} if there exists a constant $c \in (0,1)$ such that for every surface ball $\Delta \coloneqq \Delta(x,r)$ with $x \in \partial\Omega$ and $0 < r < \diam(\partial \Omega)$ there exists a point $X_\Delta \in \Omega$ such that $B(X_\Delta, cr) \subset B(x,r) \cap \Omega$,
\end{defin}

\begin{defin}[UR]
  \label{defin:ur}
  Following \cite{davidsemmes_singular, davidsemmes_analysis}, we say that an $n$-ADR set $E \subset \R^{n+1}$ is \emph{UR (uniformly rectifiable)} if it contains \emph{big pieces of Lipschitz images} of $\R^n$, i.e., there exist constants $\theta, M > 0$ such that for every $x \in E$ and $r \in (0,\diam(E))$ there is a Lipschitz mapping
  $\rho = \rho_{x,r} \colon \R^n \to \R^{n+1}$, with Lipschitz norm no larger that $M$, such that
  \begin{align*}
    \Hc^n(E \cap B(x,r) \cap \rho(\{y \in \R^n \colon |y| < r\})) \ge \theta r^n.
  \end{align*}
\end{defin}

\begin{defin}[NTA]
  \label{defin:nta}
  Following \cite{jerisonkenig}, we say that a domain $\Theta \subset \R^{n+1}$ is \emph{NTA (non-tangentially accessible)} if
  \begin{enumerate}
    \item[$\bullet$] $\Theta$ satisfies the \emph{Harnack chain condition}: there exists a 
                     uniform constant $C$ 
                     such that for every $\rho > 0$, $\Lambda \ge 1$ and $X,X' \in \Theta$ with $\delta(X), \delta(X') \ge \rho$
                     and $|X - X'| < \Lambda \rho$ there exists a chain of open balls $B_1, \ldots, B_N \subset \Theta$, $N \le C(\Lambda)$,
                     with $X \in B_1$, $X' \in B_N$, $B_k \cap B_{k+1} \neq \emptyset$ and $C^{-1} \diam(B_k) \le \dist(B_k,\partial \Theta) \le C \diam(B_k)$,
    
    \item[$\bullet$] both $\Theta$ and $\R^{n+1} \setminus \Theta$ satisfy the corkscrew condition.
  \end{enumerate}
\end{defin}

\begin{defin}[CAD] \label{CAD}
  \label{defin:cad}
  An open set $\Omega\subset \ree$ is a {\em CAD (chord-arc domain)} if it is NTA, and $\pom$ is $n$-ADR.
\end{defin}

The following result originates from the works of David and Jerison \cite{davidjerison} and Semmes \cite{semmes_analysis} (see also \cite[Definition 3.7 and Corollary 3.9]{hofmannmitreataylor}):
\begin{theorem}
  \label{theorem:corkscrew_ur}
  Suppose that $\Omega \subset \R^{n+1}$ is an open set satisfying the two-sided corkscrew condition and that $\pom$ is ADR. Then $\pom$ is UR and $\sigma(\pom \setminus \partial_* \Omega) = 0$.
\end{theorem}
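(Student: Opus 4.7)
The statement has two independent conclusions, and the easy one is the measure-theoretic boundary assertion. For every $x \in \pom$ and every $0 < r < \diam(\pom)$, the interior corkscrew condition produces a point $X \in \Omega$ with $B(X,cr) \subset B(x,r) \cap \Omega$, hence $|B(x,r) \cap \Omega| \gtrsim r^{n+1}$. Symmetrically, the exterior corkscrew condition gives a ball of radius $cr$ inside $\text{int}(\Omega^c) \cap B(x,r)$, and since $\text{int}(\Omega^c) \subset \R^{n+1}\setminus \overline{\Omega}$, we also get $|B(x,r)\setminus \overline{\Omega}| \gtrsim r^{n+1}$. Thus every point of $\pom$ satisfies both lower-density conditions in \eqref{defin:measure-theoretic_boundary}, so actually $\pom = \partial_*\Omega$, and the statement $\sigma(\pom \setminus \partial_*\Omega)=0$ holds trivially.

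For the uniform rectifiability part, the plan is to invoke the David--Jerison--Semmes theorem: an $n$-ADR set $E \subset \R^{n+1}$ that separates $\R^{n+1}$ into two ``big'' open sets (Semmes's \emph{Condition B}, which is precisely our two-sided corkscrew condition in the case $E = \pom$) is uniformly rectifiable. Operationally, I would verify one of the equivalent characterizations of UR listed in \cite{davidsemmes_analysis}. The most convenient for our hypotheses is the Bilateral Weak Geometric Lemma (BWGL): one needs to show that for each $\varepsilon > 0$, the set of ``bad'' pairs $(x,r) \in \pom \times (0,\diam(\pom))$ — those for which no $n$-plane $\pi$ approximates $\pom \cap B(x,r)$ to accuracy $\varepsilon r$ with \emph{both} sides of $\pi$ approximated by $\Omega$ and $\text{int}(\Omega^c)$ respectively — satisfies a Carleson packing condition.

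The heart of the matter is to produce such an approximating plane at most scales. Here the corkscrew points on the two sides are the key tool: at a generic surface ball $\Delta(x,r)$ one picks the interior and exterior corkscrew centers $X^+, X^-$ and considers the hyperplane $\pi$ through $x$ orthogonal to $X^+ - X^-$. If $\pom \cap B(x,r)$ were to deviate from $\pi$ by more than $\varepsilon r$ on a set of significant size, then either ADR would be violated (too much surface measure compared to $r^n$) or one of the corkscrew balls would be obstructed by the boundary. Making this dichotomy quantitative and combining it with a stopping-time/Carleson argument gives the Carleson packing needed for BWGL. The main obstacle — and the reason the full argument is nontrivial — is the passage from the pointwise, scale-by-scale corkscrew information to the Carleson-type packing estimate; this is where one uses either Semmes's direct big-pieces-of-Lipschitz-graphs construction via a quantitative implicit function theorem \cite{semmes_analysis}, or, alternatively, a compactness/blowup argument in the spirit of \cite{davidsemmes_analysis}. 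Once BWGL is established, UR follows from the David--Semmes characterization.
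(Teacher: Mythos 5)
Your proposal is correct and follows essentially the same route as the paper, which for this theorem simply cites the David--Jerison and Semmes results rather than reproducing a proof. Your direct argument for $\sigma(\pom \setminus \partial_*\Omega)=0$ is complete and clean (indeed you prove the stronger fact $\pom=\partial_*\Omega$, since both lower densities are uniformly positive at every boundary point thanks to the two corkscrew balls in each $B(x,r)$); the UR part is appropriately deferred to \cite{davidjerison}, \cite{semmes_analysis}. One small caveat on the optional sketch: what you describe is not quite the BWGL of \cite{davidsemmes_analysis}, which concerns only bilateral approximation of $\pom$ by a plane (and is used in the paper in Lemma \ref{lemma:bwgl} \emph{after} UR is already known); the requirement that the two half-spaces cut out by $\pi$ agree with $\Omega$ and $\text{int}\,\Omega^{\text{c}}$ is a separate consequence (cf.\ Lemma \ref{lemma:components}), and the plane through $x$ normal to $X^+-X^-$ need not approximate $\pom\cap B(x,r)$ at a fixed scale without the Carleson-packing input you correctly flag as the genuine difficulty.
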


\subsubsection{Dyadic cubes}

\begin{theorem}[E.g. {\cite{christ, sawyerwheeden, hytonenkairema}}]
  \label{theorem:existence_of_dyadic_cubes}
  Suppose that $E$ is a $d$-ADR set. Then there exists a countable collection $\D$ (that we call a \emph{dyadic system}),
  \begin{align*}
    \D \coloneqq \bigcup_{k \in \Z} \D_k, \ \ \ \ \ \D_k \coloneqq \{ Q_\alpha^k \colon \alpha \in \mathcal{A}_k \}
  \end{align*}
  of Borel sets $Q_\alpha^k$ (that we call \emph{(dyadic) cubes}) such that
  \begin{enumerate}
    \item[(i)] the collection $\D$ is \emph{nested}: $\text{if } Q,P \in \D, \text{ then } Q \cap P \in \{\emptyset,Q,P\}$,
    \item[(ii)] $E = \bigcup_{Q \in \D_k} Q$ for every $k \in \Z$ and the union is disjoint,
    \item[(iii)] there exist constants $c_1 > 0$ and $C_1 \ge 1$ such that
                     \begin{align}
                       \label{dyadic_cubes:balls_inclusion} \Delta(z_\alpha^k,c_1 2^{-k}) \subseteq Q_\alpha^k \subseteq \Delta(z_\alpha^k, C_1 2^{-k}),
                     \end{align}
    \item[(iv)] for every set $Q_\alpha^k$ there exists at most $N$ cubes $Q_{\beta_i}^{k+1}$ (called the \emph{children} of $Q_\alpha^k$) such that $Q_\alpha^k = \bigcup_i Q_{\beta_i}^{k+1}$, 
               where the constant $N$ depends only on the ADR constant of $E$.
  \end{enumerate}
\end{theorem}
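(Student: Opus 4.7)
The plan is to follow the Christ / Hytönen--Kairema construction. For each scale $k \in \Z$, I would first use Zorn's lemma (or a greedy argument) to pick a maximal $2^{-k}$-separated subset $\{z_\alpha^k\}_{\alpha \in \mathcal{A}_k}$ of $E$. Maximality gives two pieces of geometric data simultaneously: the balls $\Delta(z_\alpha^k, 2^{-k-1})$ are pairwise disjoint, while the balls $\Delta(z_\alpha^k, 2^{-k})$ cover $E$. If $\diam(E) < \infty$, one truncates so that only finitely many scales $k$ are nontrivial; the argument below is unaffected.

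Next, I would set up a \emph{parent map} $p_k \colon \mathcal{A}_{k+1} \to \mathcal{A}_k$ by choosing, for each $\beta \in \mathcal{A}_{k+1}$, an index $p_k(\beta)$ with $|z_\beta^{k+1} - z_{p_k(\beta)}^k| < 2^{-k}$; at least one such index exists by the covering property, and any deterministic tie-breaking (e.g.\ via a fixed well-ordering of $\bigcup_k \mathcal{A}_k$) makes $p_k$ well-defined. Iterating the parent maps assigns to every point $x \in E$ a unique ancestor at each scale: formally, at scale $k$ pick the unique $\alpha$ with $x \in \Delta(z_\alpha^k, 2^{-k})$ minimal in the well-ordering, and then take the ancestors via $p_k$. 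Define the cube $Q_\alpha^k$ to be the set of all $x \in E$ whose ancestor at scale $k$ equals $\alpha$. Property (i) (nesting) is then immediate from the definition, and property (ii) (partition of $E$ at each scale) holds by construction.

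For property (iii), the outer inclusion $Q_\alpha^k \subseteq \Delta(z_\alpha^k, C_1 2^{-k})$ follows from a geometric series estimate: if $x \in Q_\alpha^k$ has scale-$j$ ancestor $z_{\alpha_j}^j$ for $j \ge k$, then $|z_{\alpha_{j+1}}^{j+1} - z_{\alpha_j}^j| < 2^{-j}$, so telescoping gives $|x - z_\alpha^k| \le \sum_{j \ge k} 2^{-j} = 2^{1-k}$, i.e.\ $C_1 = 2$ works. The inner inclusion $\Delta(z_\alpha^k, c_1 2^{-k}) \subseteq Q_\alpha^k$ requires choosing $c_1$ small enough that any $x$ with $|x - z_\alpha^k| < c_1 2^{-k}$ is forced, at every finer scale $j \ge k$, to land in a descendant of $z_\alpha^k$: a similar telescoping combined with the $2^{-j-1}$-separation of scale-$j$ centers gives this for (say) $c_1 = 1/4$. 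Property (iv) then follows by comparing the disjoint inner balls $\Delta(z_{\beta_i}^{k+1}, c_1 2^{-k-1}) \subset Q_{\beta_i}^{k+1}$ to the outer ball $\Delta(z_\alpha^k, C_1 2^{-k})$ and invoking the ADR upper bound: $N \le D^2 (2 C_1 / c_1)^n$.

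The main obstacle is the consistency of the inner inclusion together with the top-down parent selection: one needs the numerical constants $c_1$ and $C_1$ to be compatible with the tie-breaking rule so that $Q_\alpha^k$ is honestly a union of all its grandchild-cubes, and crucially so that no boundary ambiguity breaks the exact partition in (ii). The cleanest way I would handle this is via the Hytönen--Kairema formulation, where one first builds the parent maps at all scales, then \emph{defines} cubes as equivalence classes under the ancestor relation, so that (ii) holds by fiat and only the metric inclusions in (iii) need to be checked. This sidesteps the ``almost everywhere'' caveat in Christ's original construction, which would be insufficient for the dyadic arguments used later in the paper.
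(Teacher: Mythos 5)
The paper does not prove this statement; it is cited to the literature (Christ, Sawyer--Wheeden, Hyt\"onen--Kairema) and used as a black box, so there is no ``paper's proof'' to compare against in the strict sense. Your sketch does follow the standard route and correctly isolates the main ingredients: maximal $2^{-k}$-separated nets, a parent tree, telescoping for the outer ball, and ADR for the bounded-children count. However, two of the steps you state would not go through as written.

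First, defining the parent $p_k(\beta)$ as \emph{any} index with $|z_\beta^{k+1} - z_{p_k(\beta)}^k| < 2^{-k}$ is too weak. The inner inclusion $\Delta(z_\alpha^k, c_1 2^{-k}) \subseteq Q_\alpha^k$ hinges on a \emph{forced adoption} rule: if $|z_\beta^{k+1} - z_\alpha^k|$ is small (less than $\tfrac12 2^{-k}$, say), then $p_k(\beta)$ \emph{must} equal $\alpha$. This is automatic only if the parent is the \emph{closest} scale-$k$ center (tie-broken), because by the $2^{-k}$-separation the closest is then unique. With an arbitrary covering choice, a child center sitting almost on top of $z_\alpha^k$ could be reassigned to a different parent up to $2^{-k}$ away, and the inner ball property fails for every $c_1 > 0$; accordingly the claim that ``telescoping plus $2^{-j-1}$-separation gives $c_1 = 1/4$'' is not justified. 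Second, the assignment of an ``ancestor at scale $k$'' to a \emph{point} $x \in E$ is ill-posed: you simultaneously say to pick the minimal index $\alpha$ with $x \in \Delta(z_\alpha^k, 2^{-k})$ at each scale, and to take ancestors via $p_k$, but these two rules will in general disagree across scales (the minimal choice at scale $k+1$ need not have the minimal choice at scale $k$ as its $p_k$-image), so nesting (i) is not actually guaranteed. You gesture at the right fix in the final paragraph -- build the cubes from the tree of centers, taking closures/half-open modifications of the descendant sets \`a la Hyt\"onen--Kairema so that the partition in (ii) and the nesting in (i) hold simultaneously -- but as stated the proposal leaves exactly this consistency question open, and it is the crux of the construction.
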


\begin{notation}
  \label{notation:dyadic_cubes} We shall use the following notational conventions.
  \begin{enumerate}
    \item[(1)] For each $k$, and for every cube $Q_\alpha^k \coloneqq Q \in \D_k$, we denote $\ell(Q) \coloneqq C_1 2^{-k}$ and $x_Q \coloneqq z_\alpha^k$. We call $\ell(Q)$ the \emph{side length} of $Q$, and $x_Q$ the \emph{center} of $Q$. If the set $E$ is bounded or disconnected, the side length might not be well-defined, but we can fix this problem easily by, for example, considering the minimum of the numbers $C_1 2^{-k}$ such that $Q \subset \Delta(x_Q,C_1 2^{-k})$.
    
    \item[(2)] For every $Q = Q_\alpha^k$ and $\kappa \ge 1$, we denote
    \begin{align*}
      \kappa B_Q \coloneqq B(z_\alpha^k, \kappa \ell(Q)).
    \end{align*}
    For $\kappa = 1$, we simply denote $\kappa B_Q = B_Q$.
  \end{enumerate}
\end{notation}

\begin{defin}
  \label{defin:carleson_packing_norm}
  We say that a collection $\Ac \subset \D$ satisfies a \emph{Carleson packing condition} if there exists a constant $C \ge 1$ such that
  \begin{align*}
    \sum_{Q \in \Ac, Q \subset Q_0} \sigma(Q) \le C \sigma(Q_0)
  \end{align*}
  for every cube $Q_0 \in \D$. We call the smallest such constant $C$ the \emph{Carleson packing norm of $\Ac$} and denote it by $\Cs_{\Ac}$.
\end{defin}

We need the following straightforward lemma in the proof of Theorem \ref{theorem:local_john_harnack}:
\begin{lemma}
  \label{lemma:subset_carleson}
  Let $\Ac \subset \D$ be a collection satisfying a Carleson packing condition. Also, let $Q_0 \in \D$ be a cube and $A \subset Q_0$ a measurable subset such that $\sigma(A) \ge c \, \sigma(Q_0)$ for a constant $c \in (0,1)$. Then there exists a cube $Q \in \D \setminus \Ac$ such that $\sigma(Q \cap A) > 0$ and $\ell(Q) \approx_{\Cs_{\Ac},c} \ell(Q_0)$.
\end{lemma}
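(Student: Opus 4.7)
The idea is a straightforward pigeonhole/Carleson packing argument applied to the descendants of $Q_0$ at a bounded number of generations below the level of $Q_0$. Let $k_0$ be the generation of $Q_0$, so $\ell(Q_0) = C_1 2^{-k_0}$, and for each integer $j \ge 0$ let $\Ac_j \coloneqq \{Q \in \Ac \cap \D_{k_0+j} : Q \subset Q_0\}$ and $E_j \coloneqq \bigcup_{Q \in \Ac_j} Q$. By property (ii) of Theorem \ref{theorem:existence_of_dyadic_cubes}, the descendants of $Q_0$ in generation $k_0+j$ partition $Q_0$ (modulo $\sigma$-null sets), so $E_j$ is exactly the set of points in $Q_0$ whose generation-$(k_0+j)$ ancestor belongs to $\Ac$.

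Fix an integer $m \ge 1$ to be chosen, and suppose for contradiction that for every $j \in \{0,1,\dots,m\}$ and every $Q \in \D_{k_0+j}$ with $Q \subset Q_0$ and $Q \notin \Ac$, we have $\sigma(Q \cap A) = 0$. Summed over such $Q$ in generation $k_0+j$, this gives $\sigma(A \setminus E_j) = 0$, hence $\sigma(E_j) \ge \sigma(A \cap E_j) = \sigma(A) \ge c\,\sigma(Q_0)$ for every $j \in \{0,\dots,m\}$. Summing and using the disjointness of the cubes in $\Ac_j$ for each $j$, together with the Carleson packing condition of $\Ac$, yields
\begin{align*}
  (m+1)\,c\,\sigma(Q_0) \le \sum_{j=0}^m \sigma(E_j) = \sum_{j=0}^m \sum_{Q \in \Ac_j} \sigma(Q) \le \sum_{\substack{Q \in \Ac \\ Q \subset Q_0}} \sigma(Q) \le \Cs_{\Ac}\,\sigma(Q_0).
\end{align*}
Choosing $m \coloneqq \lceil \Cs_{\Ac}/c \rceil$ makes the left-hand side strictly larger than the right-hand side, which is a contradiction.

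Consequently, there exist $j \in \{0,1,\dots,m\}$ and $Q \in \D_{k_0+j}$ with $Q \subset Q_0$, $Q \notin \Ac$, and $\sigma(Q \cap A) > 0$. Since $\ell(Q) = C_1 2^{-(k_0+j)}$, this cube satisfies $2^{-m}\,\ell(Q_0) \le \ell(Q) \le \ell(Q_0)$, and $m$ depends only on $\Cs_{\Ac}$ and $c$, so $\ell(Q) \approx_{\Cs_{\Ac},c} \ell(Q_0)$, as required. There is no real obstacle here; the only minor point to take care of is to argue at the level of measure (working with $\sigma(A \setminus E_j)$) rather than pointwise, so as to land the final conclusion $\sigma(Q \cap A) > 0$ rather than merely $Q \cap A \neq \emptyset$.
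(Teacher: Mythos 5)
Your proof is correct and follows essentially the same pigeonhole-via-Carleson-packing argument as the paper: assume for contradiction that every non-$\Ac$ descendant of $Q_0$ of bounded depth meets $A$ in a null set, deduce that $A$ is covered up to measure zero by $\Ac$-cubes at each of those generations, and sum to contradict the packing bound. The only differences are cosmetic — you make the measure-level covering step ($\sigma(A\setminus E_j)=0$) explicit and index generations starting at $j=0$ rather than $m=1$ — neither of which changes the substance.
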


\begin{proof}
  Let us consider the first $K > \Cs_{\Ac} / c$ generations of subcubes of $Q_0$. Each of these generations covers the set $A$. For contradiction, suppose that $\sigma(Q \cap A) = 0$ for each of these subcubes such that $Q \notin \Ac$. Then, for every $m=1,\ldots,K$, we can cover the set $A$ (up to a set of measure $0$) by cubes from the collection $\{Q \in \Ac \colon \ell(Q) = 2^{-m}\ell(Q_0)\}$. In particular, we get
  \begin{align*}
    \sum_{Q \in \Ac, Q \subset Q_0} \sigma(Q)
    = \sum_{m=0}^\infty \sum_{\substack{Q \in \Ac \\ \ell(Q) = 2^{-m} \ell(Q_0)}} \sigma(Q)
    &\ge \sum_{m=1}^K \sum_{\substack{Q \in \Ac \\ \ell(Q) = 2^{-m} \ell(Q_0)}} \sigma(Q \cap A) \\
    &= \sum_{m=1}^K \sigma(A)
    \ge K c \, \sigma(Q_0)
    > \Cs_{\Ac} \, \sigma(Q_0),
  \end{align*}
  which contradicts the Carleson packing condition. Hence, there exists at least one cube $Q$ from the the first $\lceil \Cs_{\Ac} / c \rceil$ generations of subcubes of $Q_0$ such that $\sigma(Q \cap A) > 0$ and $Q \in \D \setminus \Ac$.
\end{proof}

\subsection{Harmonic measure and the weak-$A_\infty$ condition}

\begin{defin}[Weak $A_\infty$ for harmonic measure]
  For harmonic measure $\omega$, we denote $\omega \in \text{weak-}A_\infty(\sigma)$ if there exist constants $C \ge 1$ and $s > 0$ such that if $B \coloneqq B(x,r)$ with $x \in \pom$ and $r \in (0,\diam(\pom)/4)$ and $A \subset \Delta \coloneqq B \cap \pom$ is a Borel set, then
  \begin{align}
    \label{condition:weak_a_infty} \omega^Y(A) \le C \left( \frac{\sigma(A)}{\sigma(\Delta)} \right)^s \omega^Y(2\Delta)
  \end{align}
  for every $Y \in \Omega \setminus 4B$.
\end{defin}

We note that the constant $2$ in \eqref{condition:weak_a_infty} can be replaced with any constant $c > 1$ without changing the class weak-$A_\infty(\sigma)$ and that the weak-$A_\infty$ property is equivalent with a weak Reverse H\"older property for the Radon--Nikodym derivative (see e.g. \cite[Section 8]{andersonhytonentapiola}).

We use the following lemma from \cite{azzametal1} in the proof of Theorem \ref{theorem:local_john_harnack}. The lemma is a key ingredient for the proof of the geometric characterization of the $\text{weak-}A_\infty$ property of harmonic measure:

\begin{lemma}[{\cite[Section 10]{azzametal1}}]
  \label{lemma:connective_set}
  Suppose that $\Omega$ has a uniformly rectifiable boundary $\pom$ and that $\omega \in \text{weak-}A_\infty$. Suppose also that $R_0 \in \D(\pom)$ is a dyadic cube and $Y \in \Omega \setminus 4B_{R_0}$ is a point such that
  \begin{align*}
    c_1 \ell(R_0) \le \delta(Y) \le \dist(Y,R_0) \le c_1^{-1} \ell(R_0)
  \end{align*}
  and $\omega^Y(R_0) \ge c_2 > 0$. Then there exist a constant $c_3 > 0$ and a subset $A \subset R_0$ such that $\sigma(A) \ge c_3 \, \sigma(R_0)$ and each point $x \in A$ can be joined to $Y$ by a $D$-non-tangential path (see Definition \ref{defin:nt-paths}), where $c_3$ and $D$ depend only on $c_1$, $c_2$, $n$, the weak-$A_\infty$ constants and the uniform rectifiability constants.
\end{lemma}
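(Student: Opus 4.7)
Since Lemma~\ref{lemma:connective_set} is imported from \cite[Section 10]{azzametal1}, my task is to reconstruct a plausible strategy. The plan is to combine three layers of selection: an analytic one powered by the weak-$A_\infty$ hypothesis on $\omega$, a geometric one powered by uniform rectifiability, and finally an explicit construction of the paths.

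First, I would exploit weak-$A_\infty$ to pull an analytically good subset out of $R_0$. Because $Y \in \Omega \setminus 4 B_{R_0}$ and $\omega^Y(R_0) \ge c_2$, the weak reverse H\"older estimate equivalent to $\omega \in \text{weak-}A_\infty$ applied to a slight dilate of $R_0$ supplies, for any small $\eta > 0$, a set $G \subset R_0$ with $\sigma(G) \ge (1-\eta) \sigma(R_0)$ on which the Radon--Nikodym density satisfies $d\omega^Y / d\sigma \gtrsim \omega^Y(R_0)/\sigma(R_0) \gtrsim 1/\sigma(R_0)$ pointwise. Second, I would invoke uniform rectifiability through the Bilateral Weak Geometric Lemma of David--Semmes: the family $\Bc \subset \D(\pom)$ of dyadic cubes on which $\pom$ fails to be bilaterally $\varepsilon$-approximable by a hyperplane satisfies a Carleson packing condition. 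Applying Lemma~\ref{lemma:subset_carleson} together with a standard corona/pigeon-hole argument inside $R_0$, I can refine $G$ to a subset $G'$ of comparable surface measure such that every $x \in G'$ has a cofinal sequence of ancestors $x \in Q_k(x) \subset R_0$ with $\ell(Q_k(x)) \approx 2^{-k}\ell(R_0)$ lying in $\D \setminus \Bc$; at these scales $\pom \cap 2 B_{Q_k(x)}$ lies in an $\varepsilon \ell(Q_k(x))$-neighborhood of an affine $n$-plane $L_{Q_k(x)}$.

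Third, for each $x \in G'$ I would construct the non-tangential path $\gamma_x$ from $Y$ by concatenating three types of segments: (i) an initial macroscopic piece from $Y$ to a corkscrew point $X_{R_0}$ of $R_0$, which is non-tangentially routable thanks to $\delta(Y) \approx \dist(Y,R_0) \approx \ell(R_0)$ and the flatness of the top scale; (ii) a telescoping chain of essentially straight segments joining corkscrew points $X_{Q_k(x)}$ of consecutive flat ancestors, with $\dist(\,\cdot\,, \pom) \gtrsim \ell(Q_k(x))$ guaranteed by the bilateral approximation by $L_{Q_k(x)}$; and (iii) a terminal perpendicular approach to $x$ along the limit direction of the hyperplanes $L_{Q_k(x)}$. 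Geometric decay of the scales makes $\gamma_x$ rectifiable and forces $\dist(\gamma_x(t), \pom) \gtrsim |\gamma_x(t) - x|$ throughout, which is the $D$-non-tangentiality required.

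The main obstacle, in my view, lies not in these three steps individually but in the residual ancestors in $\Bc$ that the corona selection cannot eliminate altogether: a single cube $Q_k(x) \in \Bc$ can break the straight chain of corkscrews at scale $k$. My plan is to absorb this by contradiction, using the pointwise density lower bound from Step~1: if the putative ``failure'' set $F = R_0 \setminus A$ had $\sigma(F) \ge \tfrac12 \sigma(R_0)$, then the reverse-H\"older bound would give $\omega^Y(F) \gtrsim c_2$, whereas a careful accounting using the Carleson packing of $\Bc$ and the localization $Y \in \Omega \setminus 4 B_{R_0}$ forces $\omega^Y(F)$ to be controlled by a small multiple of $\sum_{Q \in \Bc, Q \subset R_0} \omega^Y(Q)$, which we can make smaller than $c_2/2$ by choosing $\varepsilon$ (the flatness parameter) small enough. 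This contradiction produces $A \subset G'$ with $\sigma(A) \ge c_3 \sigma(R_0)$, with $c_3$ and $D$ depending only on the parameters listed in the statement.
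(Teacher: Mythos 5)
The paper does not actually prove Lemma~\ref{lemma:connective_set}: it is imported as a black box from \cite[Section~10]{azzametal1}, where it is the technical heart of the proof that the weak-$A_\infty$ property of harmonic measure implies the weak local John condition on UR boundaries. So I cannot compare you against ``the paper's proof,'' only against the actual argument in that reference, which is considerably more elaborate than what you outline. Two parts of your sketch are genuine gaps, not just omitted details.

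First, the macroscopic connection from $Y$ to a corkscrew point of $R_0$ in your step~(i) is exactly the issue the lemma exists to resolve, and your appeal to ``flatness of the top scale'' does not address it. The BWGL only controls \emph{most} cubes in the Carleson sense; it gives you no information about the specific cube $R_0$, nor about the dyadic cubes at comparable scale separating $Y$ from $R_0$. In a general UR set with $\omega\in\text{weak-}A_\infty$, the domain $\Omega$ need not be connected, let alone uniform, so two nearby corkscrew points can be far apart inside $\Omega$. What makes the connection possible is precisely that $\omega^Y(R_0)\ge c_2$: positive harmonic measure forces the existence of connecting ``tubes,'' and in \cite{azzametal1} this is extracted via a corona decomposition of $\D(\pom)$ into trees approximated by Lipschitz graphs, harmonic measure comparison estimates inside the associated sawtooth domains, and a maximum-principle argument that transfers $\omega^Y(R_0)\ge c_2$ into $\omega^Y$-mass carried by a good sawtooth. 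None of this appears in your sketch, and without it the initial leg of your path from $Y$ is unsupported.

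Second, the contradiction argument in your last paragraph does not close. You claim that $\omega^Y(F)$ is ``controlled by a small multiple of $\sum_{Q\in\Bc,\,Q\subset R_0}\omega^Y(Q)$,'' but the Carleson packing condition controls $\sum_Q \sigma(Q)$, not $\sum_Q\omega^Y(Q)$; the cubes in $\Bc$ are nested and overlap heavily, so the $\omega^Y$-sum can be arbitrarily large even when the $\sigma$-sum is bounded, and the weak-$A_\infty$ inequality \eqref{condition:weak_a_infty} converts $\sigma$-smallness to $\omega^Y$-smallness only for a single set, not term-by-term across a non-disjoint family. The standard and correct way to deploy weak-$A_\infty$ here is the contrapositive of \eqref{condition:weak_a_infty}: first show that the set $A$ of non-tangentially accessible boundary points carries $\omega^Y(A)\gtrsim c_2$, and then conclude $\sigma(A)\gtrsim\sigma(R_0)$. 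Your step~1 density lower bound is in the right spirit but is only needed (and only available with a $\sigma$-lower bound depending on $\eta$, not uniformly) once the $\omega^Y$-mass of the accessible set has been established by the corona/sawtooth machinery — it cannot substitute for it. Relatedly, your cofinal-ancestor selection only guarantees good cubes below some $x$-dependent starting scale; the bad cubes near the top are exactly the ones whose removal requires the corona structure, not a pointwise pigeon-hole.
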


We also need the following classical estimate (sometimes referred to as \emph{Bourgain's estimate} \cite[Lemma 1]{bourgain}):
\begin{lemma}
  \label{lemma:bourgain}
  There exist uniform constants $c_0 \in (0,1)$ and $C_0 > 1$, depending only on $n$ and the ADR constant, such that the following holds: if $x \in \pom$, $r \in (0,\diam(\pom))$ and $Y \in B(x,c_0r)$, then $\omega^Y(\Delta(x,r)) \ge 1/C_0 > 0$.
\end{lemma}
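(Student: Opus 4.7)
The plan is to carry out Bourgain's classical potential-theoretic argument in the present setting, using only the ADR property of $\partial\Omega$. I would focus on the case $n \ge 2$; the case $n = 1$ follows identically after replacing the Newtonian kernel $|Y-z|^{1-n}$ with the logarithmic fundamental solution in $\R^2$.

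First, for a small parameter $\delta \in (0, 1/4)$ to be determined, I would define the Riesz potential
\begin{equation*}
U(Y) := \int_{\Delta(x, \delta r)} |Y-z|^{1-n}\,d\sigma(z).
\end{equation*}
Since the kernel is proportional to the fundamental solution of $-\Delta$ in $\R^{n+1}$, the function $U$ is superharmonic on $\R^{n+1}$ and harmonic in $\Omega$ (the integrating measure is supported in $\partial\Omega$). Using the ADR estimate $\sigma(B(w,s)\cap\partial\Omega) \lesssim s^n$ and standard dyadic decompositions, I would establish:
\begin{enumerate}
\item[(i)] $U(w) \le C_1\,\delta r$ for every $w \in \R^{n+1}$; in particular, $U$ is bounded;
\item[(ii)] $U(y) \le C_2\,\delta^n r$ for $y \in \partial\Omega \setminus \Delta(x,r)$, since then $|y-z| \ge r/2$ for $z \in \Delta(x,\delta r)$;
\item[(iii)] $U(Y) \ge c_3\,\delta r$ for $Y \in B(x, \delta r) \cap \Omega$, using $|Y-z| \le 2\delta r$ to bound the kernel below.
\end{enumerate}

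Next, since $U$ is bounded and continuous on $\R^{n+1}$ (continuity of the Riesz potential being a standard consequence of the ADR growth of $\sigma$ and the integrable singularity of the kernel) and harmonic in $\Omega$, I would invoke the harmonic-measure representation $U(Y) = \int_{\partial\Omega} U\,d\omega^Y$ for $Y \in \Omega$. Splitting this integral across $\Delta(x,r)$ and its complement and applying (i)--(iii) gives, for $Y \in B(x, \delta r) \cap \Omega$,
\begin{equation*}
c_3\,\delta r \le U(Y) \le C_1\,\delta r \cdot \omega^Y(\Delta(x,r)) + C_2\,\delta^n r.
\end{equation*}
Rearranging yields $\omega^Y(\Delta(x,r)) \ge (c_3 - C_2\,\delta^{n-1})/C_1$, and fixing $\delta$ small enough (which requires $n \ge 2$ so that $\delta^{n-1}$ can be made arbitrarily small) so that $C_2\,\delta^{n-1} \le c_3/2$ produces a uniform positive lower bound, with $c_0 := \delta$ and $C_0 := 2C_1/c_3$.

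The main technical hurdle will be to rigorously justify the harmonic-measure representation in a potentially rough $\Omega$. In our setting this is standard: $\partial\Omega$ is ADR, so every boundary point is regular for the Dirichlet problem (by Wiener's criterion or by a direct barrier construction from a small surface-ball potential), and $U$ is continuous and bounded up to $\partial\Omega$, so the Perron construction produces $U$ itself as the unique harmonic function with those boundary values.
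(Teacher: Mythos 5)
Your proposal correctly reconstructs Bourgain's classical argument, which is exactly the proof the paper is citing (the paper does not reprove this lemma; it refers to \cite[Lemma~1]{bourgain}). The structure is sound: the Riesz potential $U$ of $\sigma|_{\Delta(x,\delta r)}$ is $\lesssim \delta r$ everywhere by the ADR upper bound and a dyadic decomposition, is $\lesssim \delta^n r$ off $\Delta(x,r)$ by direct kernel estimates, and is $\gtrsim \delta r$ on $B(x,\delta r)$ by the ADR lower bound; since $U$ is bounded, continuous, and harmonic in $\Omega$, and since ADR (codimension $1$) boundaries are Wiener regular at every scale, the representation $U(Y)=\int_{\partial\Omega} U\, d\omega^Y$ holds, and rearranging gives the claim once $\delta$ is small enough to absorb $C_2\delta^{n-1}$. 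The dependence of $c_0=\delta$ and $C_0=2C_1/c_3$ only on $n$ and the ADR constant is correctly tracked.

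Two small points worth being aware of, though neither is a gap. First, for $n=1$ the substitution of $\log(1/|Y-z|)$ is not quite a drop-in replacement: the kernel changes sign, so one normalizes it (e.g. uses $\log(r/|Y-z|)$) and the algebra in the final rearrangement looks slightly different, since there is no power of $\delta$ to send to zero; instead the localization to $\Delta(x,\delta r)$ beats the tail by a factor $\log(1/\delta)$. Second, when $\Omega$ is unbounded one needs that no harmonic-measure mass escapes to infinity in order for $\omega^Y$ to be a probability measure and for the Perron solution of bounded continuous data to coincide with $U$; this is again standard for sets with $n$-ADR boundary in $\R^{n+1}$ (the boundary has positive capacity at every scale, and $U(Y)\to 0$ as $|Y|\to\infty$ for $n\ge 2$), but it deserves a sentence rather than being implicit.
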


\subsection{Quasiconvexity, annular quasiconvexity and Loewner spaces}

\begin{defin}
  Let $(X,d)$ be a metric space. We say that a nonempty set $E \subset X$ is a \emph{continuum} if it is compact and connected. We call a continuum \emph{non-degenerate} if it contains more than one point. We say that points $x,y \in F \subset X$ can be \emph{joined in $F$} if there exists a continuum $E \subset F$ such that $x,y \in E$.
\end{defin}

\begin{defin}
  \label{defin:quantitative_connectivity}
  Let $(X,d)$ be a metric space. We say that $X$ is
  \begin{enumerate}
   \item[i)] \emph{quasiconvex} if there exists a constant $C \ge 1$ such that for any pair of points $x,y \in X$ there exists a path $\gamma_{x,y} \colon [0,1] \to X$ such that $\gamma_{x,y}(0) = x$, $\gamma_{x,y}(1) = y$ and $\ell(\gamma_{x,y}) \le C \, d(x,y)$,
   
   \item[ii)] \emph{annularly quasiconvex} if there exists a constant $C \ge 1$ such that if $x,y \in B(z,2r) \setminus B(z,r)$ for $z \in X$ and $r < \tfrac{1}{C} \diam(X)$, then $x$ and $y$ can be joined by a path $\gamma = \gamma_{x,y}$ in $B(z,Cr) \setminus B(z,r/C)$ such that $\ell(\gamma) \le C \, d(x,y)$.
    
    \item[iii)] \emph{rectifiably connected} if for any pair of points $x,y \in X$ there exists a rectifiable path $\gamma_{x,y}$ from $x$ to $y$.
  \end{enumerate}
\end{defin}

\begin{defin}
  Let $(X,d,\mu)$ be a rectifiably connected metric measure space with a locally finite Borel measure $\mu$. Let $E,F \subset X$ be two disjoint nondegenerate continua, $(E,F) = (E,F;X)$ be the family of paths in $X$ connecting $E$ and $F$ and $p \ge 1$. We define the \emph{$p$-modulus of $(E,F)$} as
  \begin{align*}
    \text{mod}_p (E,F) \coloneqq \inf_\varrho \int_X \varrho^p \, d\mu,
  \end{align*}
  where the infimum is taken over all nonnegative Borel functions $\varrho \colon X \to [0,\infty)$ satisfying
  \begin{align*}
    \int_\gamma \varrho \, ds \ge 1
  \end{align*}
  for every $\gamma$ in $(E,F)$.
\end{defin}

\begin{defin}
  \label{defin:lowener}
  Let $(X,d,\mu)$ be a $d$-dimensional, rectifiably connected metric measure space with a locally finite Borel measure $\mu$. We say that $X$ is a \emph{$d$-Loewner space} if there exists a function $\phi \colon (0,\infty) \to (0,\infty)$ such that if $E$ and $F$ are two disjoint, nondegenerate continua in $X$ satisfying
  \begin{align*}
    \frac{\dist(E,F)}{\min\{\diam(E),\diam(F)\}} \le t,
  \end{align*}
  then $\phi(t) \le \text{mod}_d (E,F)$.
\end{defin}

\subsection{Upper, Haj{\l}asz, tangential and Hofmann--Mitrea--Taylor gradients, Sobolev spaces, and weak Poincar\'e inequalities on $\pom$}
\label{subsection:gradients}

\begin{defin}
  \label{defin:upper_gradient}
  Let $f \colon \pom \to \R$. We say that a Borel-measurable function $\rho \colon \pom \to [0,\infty]$ is an \emph{upper gradient of $f$} if we have
  \begin{align*}
    |f(x) - f(y)| \le \int_\gamma \rho = \int_0^{\ell(\gamma)} \rho \circ \gamma_\ell(t) \, dt
  \end{align*}
  for all $x,y \in \pom$ and every rectifiable path $\gamma$ from $x$ to $y$, where $\gamma_\ell \colon [0,\ell(\gamma)] \to \R$ is the arc-length parametrization of $\gamma$.
\end{defin}
The notion of upper gradients originates from \cite{heinonenkoskela_essay, heinonenkoskela} where they were called \emph{very weak gradients}.

\begin{defin}
  \label{defin:hajlasz}
  Let $f \colon \pom \to \R$. We say that a Borel-measurable function $g \colon \pom \to \R$ is a \emph{Haj{\l}asz gradient of $f$} if we have
  \begin{align*}
    |f(x) - f(y)| \le (g(x) + g(y))|x-y|
  \end{align*}
  for almost every $x,y \in \pom$. We denote the class of all Haj{\l}asz gradients of $f$ by $D(f)$. For $p \ge 1$, we denote the space of Borel functions with a Haj{\l}asz gradient in $L^p(\pom)$ by $\dot{W}^{1,p}(\pom)$ and define a seminorm $\|\cdot\|_{\dot{W}^{1,p}(\pom)}$ for this space by setting
  \begin{align*}
    \|f\|_{\dot{W}^{1,p}(\pom)} \coloneqq \inf_{g \in D(f)} \|g\|_{L^p(\pom)}.
  \end{align*}
\end{defin}
The notion of Haj{\l}asz gradients originates from \cite{hajlasz}. By \cite{jiangetal}, if $f, g \in L^1_{\loc}(\pom)$ and $g$ is a Haj{\l}asz gradient of $f$, then there exist functions $\widetilde{f}, \widetilde{g} \in L^1_{\loc}(\pom)$ such that $f = \widetilde{f}$ and $g = \widetilde{g}$ almost everywhere and $4\widetilde{g}$ is an upper gradient of $\widetilde{f}$.

\begin{defin}
  \label{defin:tangential:gradient}
  Let $f \colon \pom \to \R$ be a Lipschitz function and let $x \in \pom$ be a point such that the approximate tangent plane $T_x \pom$ exists. Let $\widetilde{f} \colon \R^{n+1} \to \R$ be any Lipschitz extension of $f$ to $\R^{n+1}$. We say that $f$ is \emph{tangentially differentiable at $x$} if $\widetilde{f}|_{x + T_x \pom}$ is differentiable at $x$. When it exists, we denote the corresponding \emph{tangential gradient at $x$} by $\nablat f(x)$.
\end{defin}
A thorough reference for these types of differentiability results is the book of Maggi \cite{maggi}. In particular, since $\pom$ is uniformly rectifiable by Theorem \ref{theorem:corkscrew_ur} (and therefore $\pom$ is rectifiable; see \cite[p.\,2629]{hofmannmitreataylor} for an explicit proof), we know that the approximate tangent plane exists for almost every point $x \in \pom$ by \cite[Theorem 10.2]{maggi}, the tangential gradient exists for $\sigma$-a.e. point $x \in \pom$ by \cite[Theorem 11.4]{maggi} and the definition of the tangential gradient is independent of the choice of the Lipschitz extension by \cite[Theorem 10.1, Proposition 10.5, and Lemma 11.5]{maggi}.

By \cite{hofmannmitreataylor}, the next definitions make sense if we know that -- on top of the standing assumptions on $\Omega$ and $\pom$ (see the beginning of Section \ref{section:notation}) -- $\Omega$ is a set of locally finite perimeter and $\sigma(\pom \setminus \partial_* \Omega) = 0$ (recall \eqref{defin:measure-theoretic_boundary}). Since we only use the following objects when $\Omega$ satisfies the $2$-sided local John condition (or, equivalently by Corollary \ref{corollary:local_john_cad}, when $\Omega$ is a $2$-sided chord-arc domain), the assumptions are automatically satisfied by Theorem \ref{theorem:corkscrew_ur} and \cite[Corollary 3.14]{hofmannmitreataylor}.
\begin{defin}
  \label{defin:hmt_gradient}
  Let $\nu(x) \coloneqq (\nu_j(x))_{j=1}^{n+1}$ be the outer unit normal at $x \in \pom$. For a function $\varphi \colon \R^{n+1} \to \R$, $\varphi \in C^1_{\text{c}}$, we define the \emph{(Hofmann--Mitrea--Taylor) tangential derivatives of $\varphi$} as
  \begin{align*}
    \partial_{\smallt,j,k} \varphi \coloneqq \nu_j (\partial_k \varphi)|_{\pom} - \nu_k (\partial_j \varphi)|_{\pom}
  \end{align*}
  for $1 \le j,k \le n+1$. The \emph{Hofmann--Mitrea--Taylor Sobolev space} $L_1^p(\pom)$ is the space of the functions $f \in L^p(\pom)$ such that there exists a finite constant $C_f$ such that
  \begin{align*}
    \sum_{1 \le j,k \le n+1} \left| \int_{\pom} f \partial_{\smallt,k,j} \varphi \, d\sigma \right| \le C_f \|\varphi\|_{L^{p'}(\sigma)}
  \end{align*}
  for every $\varphi \in C^\infty_{\text{c}}(\R^{n+1})$. By the Riesz representation theorem, for every $f \in L^p_1(\pom)$ and each $j,k = 1, 2, \ldots, n+1$ there exists a function $h_{j,k} \in L^p(\pom)$ satisfying
  \begin{align*}
    \int_{\pom} h_{j,k} \varphi \, d\sigma = \int_{\pom} f \partial_{\smallt,k,j} \varphi \, d\sigma
  \end{align*}
  for every $\varphi \in C^\infty_{\text{c}}(\R^{n+1})$. We set $\partial_{\smallt,j,k} f \coloneqq h_{j,k}$ and define the \emph{Hofmann--Mitrea--Taylor gradient} $\nablahmt f$ by setting
  \begin{align*}
    \nablahmt f \coloneqq \left( \sum_k \nu_k \partial_{\smallt,j,k} f \right)_{j=1}^{n+1}.
  \end{align*}
\end{defin}
For the comprehensive theory of Hofmann--Mitrea--Taylor Sobolev spaces, see particularly Section 3 and 4 in \cite{hofmannmitreataylor}.

\begin{defin}
  \label{defin:weak_poincare} Let $1 \le p < \infty$.
  We say that $\pom$ supports a \emph{weak (Heinonen--Koskela type) $p$-Poincar\'e inequality} if there exist constants $C = C_p \ge 1$ and $\Lambda \ge 1$ such that for every $\Delta = \Delta(y,r)$ we have
  \begin{align*}
    \fint_{\Delta} |f(x) - \langle f \rangle_\Delta| \, d\sigma(x) \le C r \left(\fint_{\Lambda \Delta} \rho(x)^p \, d\sigma(x)\right)^{1/p},
  \end{align*}
  for any $f \in L^1_{\loc}(\pom)$ and any upper gradient $\rho$ of $f$.
\end{defin}

\subsection{Solvability for the Laplacian}

\begin{defin}
  \label{defin:lp-sovability}
  Let $1 \le p < \infty$.
  We say that the Dirichlet problem (for the Laplacian) is \emph{solvable in $L^p$ for $\Omega$} if there exists a constant $C$ such that for any continuous function $f \in C(\pom)$ the solution $u = u_f$ to the Dirichlet problem with datum $f$ converges non-tangentially to $f$ $\sigma$-a.e. and
  \begin{align*}
    \|N_* u\|_{L^p(\partial \Omega)} \le C \|f\|_{L^p(\partial \Omega)},
  \end{align*}
  where $N_*$ is the non-tangential maximal operator (recall \eqref{defin:non-tangential_max}).
\end{defin}

\begin{defin}
  \label{defin:regularity_problem}
  Let $1 \le p < \infty$.
  We say that the \emph{regularity problem (for the Laplacian) is solvable in $L^p$ for $\Omega$} if there exists a constant $C$ such that for any Lipschitz function $f \colon \pom \to \R$ the solution $u = u_f$ to the Dirichlet problem with datum $f$ converges non-tangentially to $f$ $\sigma$-a.e. and
  \begin{align}
    \label{estimate:regularity} \|N_* (\nabla u)\|_{L^p(\partial \Omega)} \le C \|f\|_{\dot{W}^{1,p}(\pom)},
  \end{align}
  where $\|\cdot\|_{\dot{W}^{1,p}(\pom)}$ is the Haj{\l}asz seminorm (see Definition \ref{defin:hajlasz}). For $1 < p < \infty$, we say that the \emph{tangential regularity problem (for the Laplacian) is solvable in $L^p$} if the previous holds after we replace \eqref{estimate:regularity} with the estimate
  \begin{align*}
    \|N_* (\nabla u)\|_{L^p(\partial \Omega)} \le C \|\nablat f\|_{L^p(\pom)}.
  \end{align*}
\end{defin}

\section{John, local John and weak local John conditions}
\label{section:john_conditions}

In this section, we define different John type conditions, compare them by considering some examples and make some remarks related to literature. We assume that $\Omega \subset \R^{n+1}$ is an open set, with $n$-dimensional Ahlfors--David regular boundary. We do not assume that the corkscrew conditions hold in general but we discuss their role below.

\begin{defin}[Non-tangential paths]
  \label{defin:nt-paths}
  Let $\gamma \colon [0,1] \to \overline{\Omega}$ be a path in $\Omega$ from $X$ to $Y$. For $D \ge 1$, we say that $\gamma$ is a \emph{$D$-non-tangential path} if we have
  \begin{align*}
    \ell(\gamma(X,Z)) \le D \,\delta(Z)
  \end{align*}
  for every $Z \in \gamma$.
\end{defin}

Notice that Definition \ref{defin:nt-paths} is not symmetric with respect to $X$ and $Y$. The general idea is that we use non-tangential paths to measure how well we can connect boundary points to certain points inside the space. We use the name non-tangential path to emphasize the connection between these paths and non-tangential convergence we discussed in Section \ref{section:notation}. Indeed, if there exists a $D$-non-tangential path $\gamma$ from $x \in \pom$ to $Y \in \Omega$, then, by definition, we have
\begin{align*}
  |x-Z| \le \ell(\gamma(x,Z)) \le D \, \delta(Z)
\end{align*}
for every $Z \in \gamma$ and therefore we have $Z \in \Gamma_D(x)$ for every $Z \in \gamma$. Here $\Gamma_D(x)$ is the cone at $x$ of aperture $D$ (see \eqref{defin:cone}).

\begin{defin}[John, local John and Weak local John conditions]
  \label{defin:john_conditions}
  Let $D_0 \ge 1$. We say that $\Omega$ satisfies
  \begin{enumerate}
    \item[i)] the \emph{$D_0$-John condition} if there exists a point $X_0 \in \Omega$ such that for every $Y \in \Omega$ there exists a $D_0$-non-tangential path in $\Omega$ from $Y$ to $X_0$,

    \item[ii)] the \emph{local $D_0$-John condition} if for every $x \in \pom$ and every $r \in (0,\diam(\pom))$ there exists a point $Y_x \in B(x,r) \cap \Omega$ (that we call a \emph{local John point}) such that $B(Y_x, r/D_0) \subset \Omega$ and for every $z \in \Delta(x,r)$ there exists a $D_0$-non-tangential path $\gamma_z$ in $\Omega$ from $z$ to $Y_x$ such that $\ell(\gamma_z) \le D_0 r$,
    
    \item[iii)] the \emph{weak local $D_0$-John condition} if there exist constants $\theta \in (0,1]$ and $R \ge 2$ such that for every $X \in \Omega$ there exists a Borel set $F \subset \Delta_X \coloneqq B(X,R \, \delta(X)) \cap \pom$ such that $\sigma(F) \ge \theta \sigma(\Delta_X)$ and for every $z \in F$ there exists a $D_0$-non-tangential path $\gamma_z$ in $\Omega$ from $z$ to $X$ such that $\ell(\gamma_z) \le D_0 R \, \delta(X)$.
  \end{enumerate}
\end{defin}

The John condition was first used in the work of John \cite{john} but the terminology originates from the work of Martio and Sarvas \cite{martiosarvas}. The local John condition was first used in the work of Hofmann, Mitrea and Taylor \cite[Definition 3.12]{hofmannmitreataylor} and weak local John condition originates from the recent work of Azzam, Hofmann, Martell, Mourgoglou and the second named author \cite[Definition 2.11]{azzametal1}.

Generally, the John condition does not imply the local John condition, the local John condition does not imply the John condition, and there are domains that satisfy the weak local John condition but not the local John condition. We can see this by considering some straightforward examples:

\begin{example}
  \label{example:john_conditions}
   In $\R^2$ we have the following.
  \begin{enumerate}
    \item[(1)] $\Omega_1 \coloneqq B(0,1) \setminus \{(x,0) \colon x \in [0,1]\}$ satisfies the John condition (we can choose e.g. $X_0 = (0,-\tfrac{1}{2})$ as the ``John point'') but not the local John condition. We can see this by noticing that any ball $B((1,0),r) \cap \Omega_1$ contains points $(y,t) \in \Omega_1$ and $(z,t) \in \pom_1$ for both $t < 0$ and $t > 0$ with arbitrarily small $|t|$. Hence, no matter how we choose the point $\widetilde{X}_0 \in B((1,0),r) \cap \Omega_1$, there are points in $B((1,0),r) \cap \pom_1$ that can be connected to $\widetilde{X}_0$ inside $\Omega_1$ only with paths $\gamma$ satisfying $\ell(\gamma) \ge 1$.
    
    \item[(2)] $\Omega_2 \coloneqq \R^2 \setminus \partial B(0,1)$ satisfies the local John condition but not the John condition because we cannot connect a point in $B(0,1)$ to a point in $\R^2 \setminus B(0,1)$ with a path in $\Omega_2$.
    
    \item[(3)] $\Omega_3 \coloneqq B(0,1) \setminus \{(x,0) \colon x \in (-1,1)\}$ satisfies the weak local John condition but not the John condition or the local John condition. This is because $\Omega_3$ is not connected (and hence cannot satisfy the John condition) and it cannot satisfy the local John condition for the same reason why $\Omega_1$ in the part (1) of this example does not satisfy it.
  \end{enumerate}
\end{example}

To the best knowledge of the authors, it is not known if the local John condition alone implies the weak local John condition. We note that if there exists an open set $\Omega \subset \R^{n+1}$ with $n$-ADR boundary such that it satisfies the weak local John condition but not the local John condition, then $\Omega$ cannot satisfy the exterior corkscrew condition (see Lemma \ref{lemma:corkscrews_harmonic_measure}).

The John condition can be seen as a stronger form of connectivity of the space, but it does not imply connectivity for the boundary, not even if the exterior corkscrew condition holds. We can see this by considering the annulus $A \coloneqq B(0,2) \setminus B(0,1)$. By the same example and the set $\Omega_2$ in Example \ref{example:john_conditions} (2), we see that the local John condition does not generally imply connectivity for the space nor the boundary. However, by Theorem \ref{theorem:local_john_harnack}, we know that if the exterior corkscrew condition holds and the boundary of the space is Ahlfors--David regular, the local John condition implies connectivity for the space (but not the boundary, as we saw from the annulus $A$). By Corollary \ref{corollary:local_john_cad} and Corollary \ref{corollary:annular_quasiconvexity}, we know that the $2$-sided local John condition combined with Ahlfors--David regularity of the boundary implies annular quasiconvexity (and therefore connectivity) for the boundary.

We note that the local John condition in Definition \ref{defin:john_conditions} is the ``extreme'' case in the definition given in \cite[Definition 3.12]{hofmannmitreataylor}. To be more precise, let us consider the following weaker version of the local John condition:

\begin{defin}
  \label{defin:local_local_john}
  Let $\Omega \subset \R^{n+1}$ be an open set, $D_0 \ge 1$ and $0 < R_0 \le \diam(\pom)$. We say that $\Omega$ satisfies the \emph{local $(D_0,R_0)$-John condition} if for every $x \in \pom$ and every $r \in (0,R_0)$ there exists a point $Y_x \in B(x,r) \cap \Omega$ (that we call a \emph{local John point}) such that $B(Y_x, r/D_0) \subset \Omega$ and for every $z \in \Delta(x,r)$ there exists a $D_0$-non-tangential path $\gamma_z$ in $\Omega$ from $z$ to $Y_x$ such that $\ell(\gamma_z) \le D_0 r$.
\end{defin}
Thus, the local $D_0$-John condition and the local $(D_0,\diam(\pom))$-condition are the same thing. The reason why we consider only the case $(D_0,\diam(\pom))$ in Theorem \ref{theorem:local_john_harnack} is simply because the result may fail if the local John condition is not good enough. We see this by the following simple example (see also Figure \ref{figure:annulus}):

\begin{example}
  \label{example:local_local_john}
  Let $\Omega \coloneqq B(0,1) \cup \{X \in \R^2 \colon |X| > 2\} \subset \R^2$ be the interior of the complement of the annulus $B(0,2) \setminus B(0,1)$. Now $\pom$ is $1$-ADR and $\Omega$ satisfies the $2$-sided local $(D_0,\tfrac{1}{2})$-John condition for suitable $D_0 > 1$, but $\Omega$ is not a chord-arc domain. In addition, the boundary $\pom$ is not connected.
\end{example}

\begin{figure}[ht]
  \includegraphics[scale=0.46]{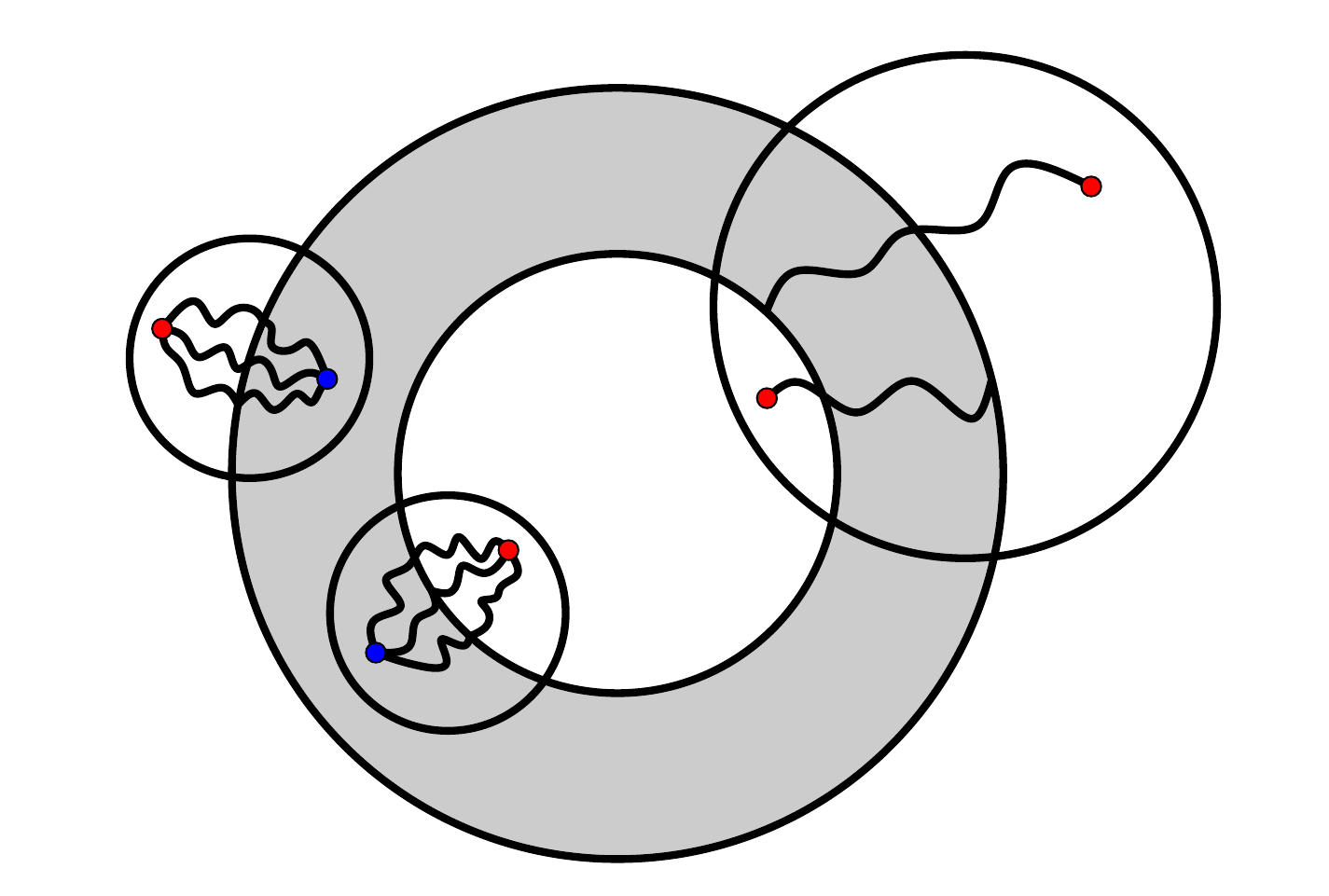}
  \caption{The $2$-sided local $(D_0,R_0)$-John condition does not imply Harnack chain condition or any kind of connectivity if $R_0$ is not large enough. In Example \ref{example:local_local_john}, for the complement of a closed annulus, we can find local John points for small balls centered at either the inner or the outer boundary. However, if the ball is large enough, then it contains both inner and outer boundary points, and we cannot find a point in $\Omega$ that connects to all the boundary points inside the ball without passing through the annulus itself.}
  \label{figure:annulus}
\end{figure}

In the proof of Theorem \ref{theorem:local_john_harnack}, we use harmonic measure theory and techniques that require that harmonic measure belongs to the class $\text{weak-}A_\infty(\sigma)$. By the main result of \cite{azzametal1}, we know that, in our context, the $\text{weak-}A_\infty$ property is equivalent with uniform rectifiability for $\pom$ and the weak local John condition for $\Omega$. We note that although we do not assume the weak local John condition, it follows from the assumptions of Theorem \ref{theorem:local_john_harnack}. Indeed, by \cite[p.~842]{davidjerison} (see also \cite{semmes_analysis}), we know that the $2$-sided corkscrew condition implies the \emph{interior big pieces of Lipschitz graphs condition} (see \cite[p.~572]{bennewitzlewis} for definition). This condition then implies that harmonic measure is in $\text{weak-}A_\infty$ by \cite[Theorem~1]{bennewitzlewis}. Combining this with \cite[Theorem 1.1]{azzametal1} gives us the weak local John condition:

\begin{lemma}
  \label{lemma:corkscrews_harmonic_measure}
  Let $\Omega \subset \R^{n+1}$ be an open set satisfying the $2$-sided corkscrew condition, with $n$-dimensional Ahlfors--David regular boundary. Then $\omega \in \text{weak-}A_\infty(\sigma)$ and $\Omega$ satisfies the weak local John condition.
\end{lemma}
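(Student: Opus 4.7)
The proof I would write here is essentially a chain of applications of known results, as the paragraph preceding the lemma already indicates, so my plan is to assemble those citations in the right order and verify that the hypotheses line up.

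The first step is to obtain the weak-$A_\infty$ property for harmonic measure. I would note that under the $2$-sided corkscrew assumption together with $n$-ADR boundary, the work of David and Jerison \cite{davidjerison} (see also Semmes \cite{semmes_analysis}) gives that $\Omega$ enjoys the \emph{interior big pieces of Lipschitz graphs} condition (in the sense of \cite[p.~572]{bennewitzlewis}). Invoking then \cite[Theorem~1]{bennewitzlewis}, one concludes directly that $\omega \in \text{weak-}A_\infty(\sigma)$, with constants depending only on $n$, the ADR constant, and the corkscrew constants.

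The second step is to upgrade this to the weak local John condition. For that, I would use Theorem \ref{theorem:corkscrew_ur}: the $2$-sided corkscrew condition plus ADR implies that $\pom$ is uniformly rectifiable. Combining UR of $\pom$ with the weak-$A_\infty$ property just established, the main result \cite[Theorem~1.1]{azzametal1} of Azzam, Hofmann, Martell, Mourgoglou and the second named author yields that $\Omega$ satisfies the weak local John condition, with constants depending only on the dimension, ADR, UR and weak-$A_\infty$ constants (which in turn depend only on the initial data).

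There is no real obstacle here; the proof is a direct concatenation of these three external results, and the only thing to be careful about is checking that the geometric hypotheses assumed in \cite{davidjerison}, \cite{bennewitzlewis}, and \cite{azzametal1} are subsumed by ``$2$-sided corkscrew plus $n$-ADR''. Since \cite{davidjerison} and \cite{semmes_analysis} operate precisely in this setting, and \cite[Theorem~1.1]{azzametal1} is formulated exactly for ADR sets satisfying the corkscrew condition on $\Omega$ for which $\omega \in \text{weak-}A_\infty(\sigma)$ and $\pom$ is UR, the hypotheses line up cleanly and the conclusion follows.
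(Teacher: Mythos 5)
Your proposal is correct and follows essentially the same chain of citations as the paper: David--Jerison/Semmes for interior big pieces of Lipschitz graphs, Bennewitz--Lewis for weak-$A_\infty$, and then the main result of Azzam--Hofmann--Martell--Mourgoglou--Tolsa for the weak local John condition. The only minor addition is your explicit mention of Theorem \ref{theorem:corkscrew_ur} to establish uniform rectifiability before invoking \cite[Theorem~1.1]{azzametal1}, which the paper leaves implicit.
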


\section{The Bilateral Weak Geometric Lemma and non-tangential approach}
\label{section:bwgl}

In this section, we consider some tools that we use in the proof of Theorem \ref{theorem:local_john_harnack} to connect two pieces of different paths to each other without going too close to the boundary. Our approach is based on the use of $\beta$-numbers of Jones \cite{jones} and the Bilateral Weak Geometric Lemma of David and Semmes \cite{davidsemmes_analysis}. Throughout the section, we assume that $\Omega \subset \R^{n+1}$ is an open set satisfying the $2$-sided corkscrew condition, with uniformly rectifiable boundary $\pom$, and $\D$ is a dyadic system on $\pom$. Recall that $\sigma \coloneqq \Hc^n|_{\pom}$ is the surface measure and $\delta(X) \coloneqq \dist(X,\pom)$ for $X \in \R^{n+1}$.

Given a ball $B = B(x,r) \subset \R^{n+1}$, a hyperplane $L \subset \R^{n+1}$ and a constant $c > 0$, we denote
\begin{align*}
  b\beta_{\pom}(B,L) \coloneqq \sup_{y \in B \cap \pom} \frac{\dist(y,L)}{r} + 
  \sup_{Y \in L \cap B} \frac{\delta(Y)}{r}
\end{align*}
and
\begin{align*}
  \Uc_{c}(L) \coloneqq \{Y \in \R^{n+1} \colon \dist(Y,L) < c\}.
\end{align*}
For any ball $B \subset \R^{n+1}$, we define the \emph{bilateral $\beta$-number} as
\begin{align*}
  b\beta_{\pom}(B) \coloneqq \inf_L b\beta_{\pom}(B,L),
\end{align*}
where the infimum is taken over all hyperplanes $L \subset \R^{n+1}$.

Recall from Notation \ref{notation:dyadic_cubes} that for a cube $Q \in \D$, we denote $B_Q = B(x_Q,\ell(Q))$, where $x_Q$ is the center of $Q$ and $\ell(Q)$ its side length. By a straightforward reformulation of \cite[Chapter I.2, Theorem 2.4]{davidsemmes_analysis}, the following version of the \emph{Bilateral Weak Geometric Lemma (BWGL)} holds:

\begin{lemma}
  \label{lemma:bwgl}
  For every $\eps > 0$, there exists a constant $C_\eps \ge 1$ such that
  \begin{align}
    \label{ineq:bwgl} \sum_{\substack{Q \in \D, Q \subset R,\\ b\beta_{\pom}(2B_Q)>\eps}} \sigma(Q) \le C_\eps \sigma(R)
  \end{align}
  for any $R \in \D$, i.e., for any $\eps > 0$ the collection $\{Q \in \D \colon b\beta_{\pom}(2B_Q)>\eps\}$ satisfies a Carleson packing condition with Carleson packing norm depending only on $\eps$, $n$ and uniform rectifiability constants.
\end{lemma}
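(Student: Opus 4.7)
The plan is to derive this dyadic Carleson packing condition from the classical continuous form of the Bilateral Weak Geometric Lemma of David and Semmes, using the fact that $\pom$ is uniformly rectifiable. Indeed, by Theorem \ref{theorem:corkscrew_ur}, the $2$-sided corkscrew condition and the ADR assumption on $\pom$ imply that $\pom$ is UR, so \cite[Chapter I.2, Theorem 2.4]{davidsemmes_analysis} applies and gives: for every $\eps' > 0$ there exists $C_{\eps'} \ge 1$ such that
\begin{align*}
\int_0^{R_0} \sigma\bigl( \{ y \in \Delta(x_0, R_0) : b\beta_{\pom}(B(y,r)) > \eps' \} \bigr) \, \frac{dr}{r} \le C_{\eps'} R_0^n
\end{align*}
for every $x_0 \in \pom$ and $R_0 \in (0, \diam(\pom))$.

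The translation to the dyadic statement proceeds via a standard perturbation and integration argument. First I would show that if $Q \in \D$ satisfies $b\beta_{\pom}(2B_Q) > \eps$, then there exist constants $c = c(\eps, n) \in (0, 1)$ and a radius interval $I_Q \subset [\ell(Q), C\ell(Q)]$ of logarithmic length $\gtrsim 1$ such that
\begin{align*}
b\beta_{\pom}(B(y, r)) > c \eps \quad \text{for every } y \in Q \text{ and every } r \in I_Q.
\end{align*}
This is a direct consequence of the definition of $b\beta_{\pom}$: any hyperplane witnessing a bilateral $\eps$-approximation of $\pom$ on a ball $B(y, r)$ with $y \in Q$ and $r \in I_Q$ would, after translation by $|y - x_Q| \lesssim \ell(Q)$ and rescaling by a factor $\lesssim 1$, also witness a bilateral $c^{-1}\eps$-approximation on $2B_Q$, contradicting the hypothesis on $Q$. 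Integrating the pointwise lower bound against $d\sigma(y) \, \tfrac{dr}{r}$ yields
\begin{align*}
\int_{I_Q} \int_Q \mathbf{1}_{\{b\beta_{\pom}(B(y,r)) > c\eps\}} \, d\sigma(y) \, \frac{dr}{r} \gtrsim \sigma(Q).
\end{align*}

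Finally, I would sum the previous estimate over the collection of cubes $Q \subset R$ with $b\beta_{\pom}(2B_Q) > \eps$. Since the $Q$'s are nested dyadically and each $I_Q$ has bounded overlap in the $r$-variable (a cube at level $k$ contributes only to a single fixed logarithmic interval of radii, and on each level the cubes are disjoint), the resulting double integral is bounded (up to a uniform constant) by the continuous BWGL integral over the ball $CB_R$ and the radius range $(0, C\ell(R))$, which is at most $C_{c\eps} \ell(R)^n \approx \sigma(R)$ by ADR. This yields \eqref{ineq:bwgl} with $C_\eps$ depending only on $\eps$, the dimension, the ADR constant, and the UR constants.

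The only step requiring any care is the perturbation lemma tying $b\beta_{\pom}(2B_Q)$ to $b\beta_{\pom}(B(y, r))$ for nearby centers and radii; everything else is bookkeeping between the continuous Carleson condition and the dyadic one, and this bookkeeping is precisely what the authors mean by ``a straightforward reformulation''.
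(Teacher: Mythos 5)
Your proof is correct and follows the same route the paper intends: the paper simply cites \cite[Chapter I.2, Theorem 2.4]{davidsemmes_analysis} and calls the passage to the dyadic Carleson packing statement a ``straightforward reformulation,'' and your argument supplies exactly that reformulation (the perturbation lemma relating $b\beta_{\pom}(2B_Q)$ to $b\beta_{\pom}(B(y,r))$ via ball containment and the scaling factor $r/(2\ell(Q))$, followed by integration in $d\sigma(y)\,dr/r$ and bounded overlap of the intervals $I_Q$ across dyadic levels). One small remark: the constant $c$ in your perturbation step depends only on the dyadic and dimensional constants, not on $\eps$, since it is simply the reciprocal of the maximal dilation factor $r/(2\ell(Q))\lesssim 1$ over the admissible range of $r$.
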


The BWGL actually characterizes the uniform rectifiability property but we only need the part written in Lemma \ref{lemma:bwgl}.

In the proof of Theorem \ref{theorem:local_john_harnack}, we use the BWGL combined with the following lemmas that help us with technicalities related to constructing a path from a point to a nearby local John point. Alternatively, we could use the Whitney region constructions of Hofmann, Martell and Mayboroda \cite[Section 3]{hofmannmartellmayboroda} (and their straightforward geometric applications in \cite{hofmanntapiola_varopoulos} and \cite{hofmanntapiola_approx}) for the same purpose, but this alternative approach is slightly less elementary than the one we present in this paper.

\begin{lemma}
  \label{lemma:components}
  There exists $\eps_0 > 0$, depending only on the $2$-sided corkscrew condition, such that the following holds: if $B = B(x,r)$ is a ball with $x \in \pom$ and $L_B$ is a hyperplane such that $b\beta_{\pom}(B, L_B) < \eps \le \eps_0$, then $B \setminus \Uc_{\eps r}(L_B)$ consists of two convex components, $B^+$ and $B^-$, such that
  \begin{align*}
    B^+\subset \Omega \quad \text{ and }\quad B^-\subset \R^{n+1}\setminus \overline\Omega.
  \end{align*}
\end{lemma}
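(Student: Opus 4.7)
The plan is to combine an elementary Euclidean-geometric description of $B\setminus\Uc_{\eps r}(L_B)$ with the $2$-sided corkscrew condition. Let $\nu$ be a unit normal to $L_B$ and write $L_B=\{Y:Y\cdot\nu=a\}$; then
\[
B\setminus\Uc_{\eps r}(L_B)=\{Y\in B:Y\cdot\nu\ge a+\eps r\}\cup\{Y\in B:Y\cdot\nu\le a-\eps r\},
\]
and the two pieces are disjoint, relatively open in $B$, and convex (each being the intersection of $B$ with a closed half-space). Since $x\in B\cap\pom$ is the center of $B$, the first half of the hypothesis $b\beta_\pom(B,L_B)<\eps$ yields $\dist(x,L_B)<\eps r<r$, so $L_B$ cuts $B$ transversally and both pieces are nonempty provided $\eps<1$. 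Denote them $B^+$ and $B^-$.

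The same first half of the hypothesis yields $B\cap\pom\subset\Uc_{\eps r}(L_B)$, hence $(B^+\cup B^-)\cap\pom=\emptyset$. As each $B^\pm$ is connected and disjoint from $\pom$, each is contained in a single connected component of $\R^{n+1}\setminus\pom$, and hence each lies entirely in $\Omega$ or entirely in $\R^{n+1}\setminus\overline\Omega$.

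The main step is to force the two caps onto opposite sides of $\pom$, and this is where the $2$-sided corkscrew condition enters. Let $c\in(0,1)$ be the common corkscrew constant, and let $X^+\in\Omega$, $X^-\in\R^{n+1}\setminus\overline\Omega$ be interior and exterior corkscrew points for the surface ball $\Delta(x,r)$, so that $|X^\pm-x|\le (1-c)r$ and $\delta(X^\pm)\ge cr$. Setting $\eps_0 = c/2$, I would argue that neither $X^+$ nor $X^-$ can lie in $\Uc_{\eps r}(L_B)$ when $\eps\le\eps_0$: if, say, $X^+\in\Uc_{\eps r}(L_B)$, let $Z$ be the nearest-point projection of $X^+$ onto $L_B$, so that $|X^+-Z|<\eps r$ and
\[
|Z-x|\le|Z-X^+|+|X^+-x|<\eps r+(1-c)r<r,
\]
placing $Z$ in $L_B\cap B$; the second half of $b\beta_\pom(B,L_B)<\eps$ then gives $\delta(Z)<\eps r$, whence $\delta(X^+)\le|X^+-Z|+\delta(Z)<2\eps r\le cr$, contradicting $\delta(X^+)\ge cr$. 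The analogous argument applies to $X^-$. Therefore $X^+,X^-\in B^+\cup B^-$; since one lies in $\Omega$ and the other in $\R^{n+1}\setminus\overline\Omega$, they must lie in different caps, so (relabeling if necessary) $B^+\subset\Omega$ and $B^-\subset\R^{n+1}\setminus\overline\Omega$.

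The only real obstacle is the quantitative bookkeeping in the previous paragraph: one must chase both halves of the $b\beta$-hypothesis through the triangle inequality to ensure the corkscrew points avoid the slab, and this is exactly what fixes the threshold $\eps_0$ in terms of the corkscrew constant. All other steps are short convexity and connectivity observations.
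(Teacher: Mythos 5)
Your proof is correct and reaches the same conclusion, but the mechanism for locating the interior and exterior components is genuinely different from the paper's. The paper takes the corkscrew \emph{balls} $B(Z^{\pm},cr)\subset B$, observes (using only the first half of the $b\beta$-hypothesis and the narrowness of the slab, $2\eps r < cr$) that a ball of radius $cr$ cannot be swallowed by $\Uc_{\eps r}(L_B)$, and concludes each corkscrew ball must spill into one of the two caps; a short case analysis then pins down which cap is which. You instead place the corkscrew \emph{points} $X^{\pm}$ directly in the caps, and the step doing the work is an application of the \emph{second} half of the $b\beta$-hypothesis: a point in the slab projects onto $L_B\cap B$, points of $L_B\cap B$ have $\delta$-distance $<\eps r$ by the second supremum in $b\beta_{\pom}$, and hence any point of the slab has $\delta<2\eps r$, which is incompatible with the corkscrew lower bound $\delta(X^{\pm})\ge cr$ once $\eps\le c/2$. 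This is a clean alternative that avoids tracking which component each ball hits and gives a slightly tidier threshold $\eps_0=c/2$ versus the paper's $\min\{1/10,\,c/5\}$.

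One small slip: the claim that both caps are nonempty ``provided $\eps<1$'' is off; since $\dist(x,L_B)<\eps r$ and the slab has half-width $\eps r$ you actually need $\eps<1/2$ to guarantee both open caps meet $B$. This does not affect your argument---your $\eps_0=c/2\le 1/4$ is safely below $1/2$, and in any case nonemptiness follows automatically once you show $X^{+}$ and $X^{-}$ lie in the (necessarily distinct) caps---but it is worth stating the correct elementary bound.
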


\begin{proof}
  Let $B = B(x,r)$ be a ball with $x \in \pom$, $\eps > 0$ and $L_B$ be a hyperplane such that $b\beta_{\pom}(B, L_B) < \eps$. By the definitions of $b\beta_{\pom}(B, L_B)$ and $\Uc_{\eps r}(L_B)$, we know that
  \begin{align*}
    \pom \cap B \subset \Uc_{\eps r}(L_B),
  \end{align*}
  and $B \setminus \Uc_{\eps r}(L_B)$ has exactly two components if $\eps$ is small enough, say $\eps < 1/10$. Thus, the two connected components $V_1$ and $V_2$, of $B \setminus \Uc_{\eps r}(L_B)$ are contained in $\R^{n+1} \setminus \pom =\Omega \cup (\R^{n+1}\setminus\overline\Omega)$. Furthermore, the components $V_1$ and $V_2$ are convex by the definition of $\Uc_{\eps r}(L_B)$, and each $V_i$ is either fully contained in $\Omega$ or fully contained in $\R^{n+1} \setminus \overline\Omega$. Indeed, if $V_i$ intersects both $\Omega$ and $\R^{n+1} \setminus \overline{\Omega}$, then the line segment from any point $Z_1 \in V_i \cap \Omega$ to any point $Z_2 \in V_1 \cap \R^{n+1} \setminus \overline{\Omega}$ has to contain a point $z_0 \in \pom$ which then has to belong to $V_i$ by convexity. This is impossible because $V_i \subset B \setminus \Uc_{\eps r}(L_B)$ and therefore $V_i \cap \pom = \emptyset$.

  Let us then show that if $\eps$ is small enough, one of the components $V_i$ lies in $\Omega$ and the other lies in $\R^{n+1} \setminus \overline{\Omega}$. By the $2$-sided corkscrew condition (applied for the surface ball $\Delta(x,r) = B(x,r) \cap \pom$), there exist balls
  \begin{align*}
    B_1 \coloneqq B(Z^+, cr)\subset \Omega \cap B,\qquad B_2 \coloneqq B(Z^-, cr) \subset (\R^{n+1} \setminus \overline{\Omega}) \cap B,
  \end{align*}
  where $c \in (0,1)$ is independent of $\Delta$. Let us assume that $\eps \le c/5$. Now neither $B_1$ nor $B_2$ can be contained in $\Uc_{\eps r}(L_B)$ and therefore both the balls intersect $V_1 \cup V_2$. Since each $V_i$ is either fully contained in $\Omega$ or fully contained in $\R^{n+1} \setminus \overline{\Omega}$, we know that $V_i \cap B_1 \neq \emptyset$ implies $V_i \subset \Omega$ and $V_i \cap B_2 \neq \emptyset$ implies $V_i \subset \R^{n+1} \setminus \overline{\Omega}$. We also notice that if $V_1 \cap B_1 \neq \emptyset$, then $V_1 \cap B_2 = \emptyset$ since otherwise $V_1$ intersects both $\Omega$ and $\R^{n+1} \setminus \overline{\Omega}$. Thus, since both $B_1$ and $B_2$ intersect $V_1 \cup V_2$, we know that $V_1 \cap B_1 \neq \emptyset$ implies $V_2 \cap B_2 \neq \emptyset$, and similarly $V_1 \cap B_2 \neq \emptyset$ implies $V_2 \cap B_1 \neq \emptyset$. In particular, $B_1$ intersects exactly one of the components $V_i$, say $V_1$, which is then contained in $\Omega$, and $B_2$ then intersects $V_2$, which is contained in $\R^{n+1} \setminus \overline{\Omega}$. Thus, we may set $B^+ = V_1$ and $B^- = V_2$ and choose $\eps_0 = \min\{1/10, c/5\}$. This completes the proof. 
\end{proof}

\begin{lemma}
  \label{lemma:path}
  Let $B = B(x_0,r)$ be a ball with $x_0 \in \pom$, $X_1 \in \Omega$ a point such that $|X_1 - x_0| \approx \delta(X_1) \ge r/2$ and $\gamma$ a $D$-non-tangential path from $x_0$ to $X_1$, where $D \ge 1$. Let $\eps_0 > 0$ be as in Lemma \ref{lemma:components} and suppose that $0 < \eps < \min\{\eps_0,\tfrac{1}{12D}\}$. Let $L_B$ be a hyperplane such that $b\beta_{\pom}(B,L_B) < \eps$, and let $B^+$ and $B^-$ be the components of $B$ as in Lemma \ref{lemma:components}. Now $\gamma$ intersects $B^+ \setminus \Uc_{2\eps r}(L_B)$.
\end{lemma}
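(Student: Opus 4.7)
The plan is to pick an intermediate point $Z^* \in \gamma$ at a carefully chosen distance from $x_0$ and verify by a short case analysis that $Z^*$ has all three desired properties: it lies in $B$, it lies on the $\Omega$-side (so in $B^+$), and it is at distance more than $2\eps r$ from $L_B$. The intuition is that $\gamma$ starts at $x_0 \in \pom \cap B \subset \Uc_{\eps r}(L_B)$, but the $D$-non-tangential property forces $\gamma$ to peel away from $\pom$ linearly in arc length; since $\pom \cap B$ is trapped in a thin slab around $L_B$, this peeling translates into genuine distance from $L_B$.

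First, I would use the intermediate value theorem applied to the continuous function $t \mapsto |\gamma(t) - x_0|$ to locate a point $Z^* \in \gamma$ with $|Z^* - x_0| = 6D\eps r$. Such a point exists and lies in $B$ because $|X_1 - x_0| \ge r/2$ and $6D\eps r < r/2$ (using $\eps < 1/(12D)$); as $Z^* \neq x_0$, we also have $Z^* \in \Omega$. The $D$-non-tangentiality of $\gamma$ then yields
\[
  \delta(Z^*) \;\ge\; \frac{\ell(\gamma(x_0, Z^*))}{D} \;\ge\; \frac{|Z^* - x_0|}{D} \;=\; 6\eps r.
\]

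Next, I would argue that $\dist(Z^*, L_B) > 2\eps r$ by contradiction. Suppose there is $P \in L_B$ with $|Z^* - P| \le 2\eps r$. If $P \in B$, the second term in the definition of $b\beta_{\pom}(B, L_B) < \eps$ furnishes $y \in \pom$ with $|P - y| < \eps r$, hence $\delta(Z^*) \le |Z^* - y| < 3\eps r$, contradicting $\delta(Z^*) \ge 6\eps r$. If instead $P \notin B$, then $|P - x_0| \ge r$, so $|Z^* - P| \ge r - 6D\eps r > r/2 > 2\eps r$, again a contradiction. Therefore $Z^* \in (B \cap \Omega) \setminus \Uc_{2\eps r}(L_B) \subset (B \cap \Omega) \setminus \Uc_{\eps r}(L_B)$, and since the component $B^-$ of Lemma \ref{lemma:components} lies in $\R^{n+1} \setminus \overline{\Omega}$, the point $Z^*$ must lie in the other component $B^+$, giving $Z^* \in B^+ \setminus \Uc_{2\eps r}(L_B)$ as required.

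The main obstacle is really just selecting the correct scale $|Z^* - x_0| = 6D\eps r$: it must be large enough that the non-tangential estimate pushes $\delta(Z^*)$ comfortably above $2\eps r$ (which is why the factor $6 > 2 + 1$ appears, accommodating the $2\eps r$ projection plus the $\eps r$ slippage from $L_B$ to $\pom$), and small enough to stay strictly inside $B$ even after being multiplied by $D$. The threshold $\eps < 1/(12D)$ is precisely the one that makes both the intermediate value step and the geometric case $P \notin B$ go through with room to spare.
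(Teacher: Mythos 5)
Your proof is correct and follows essentially the same strategy as the paper: locate an intermediate point on $\gamma$ at a controlled positive distance from $x_0$, use $D$-non-tangentiality to bound $\delta$ from below, and use the $b\beta$-flatness condition to rule out that the point lies in $\Uc_{2\eps r}(L_B)$. The only cosmetic difference is that the paper fixes the intermediate point on $\partial B(x_0,r/4)$ and shows every point of $\tfrac{1}{2}B\cap \Uc_{2\eps r}(L_B)$ has $\delta < r/(4D)$, while you tune the radius to $6D\eps r$ and argue by contradiction about the nearest point of $L_B$; both yield the same estimates with the same threshold $\eps < 1/(12D)$.
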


\begin{proof}
  Since $|X_1 - x_0| \ge r/2$, we know that $X_0 \notin B(x_0,r/4)$. Thus, there exists a point $Y_0 \in \gamma \cap \partial B(x_0,r/4)$. We claim that $Y_0 \in B^+ \setminus \Uc_{2 \eps r}(L_B)$.
  
  We notice that, by the definition of $D$-non-tangential paths, we have
  \begin{align}
    \label{estimate:y_0_point}
    \delta(Y_0) \ge \frac{1}{D} \ell(\gamma(x_0,Y_0)) \ge \frac{1}{D} |x_0 - Y_0| = \frac{1}{4D}r.
  \end{align}
  For any point $X \in \R^{n+1}$, let $\pr_X$ be its orthogonal projection onto $L_B$. Let $Z \in \tfrac{1}{2}B \cap \Uc_{2\eps r}(L_B)$. Now it holds that
  \begin{align*}
    |\pr_Z - x_0| \le |\pr_Z - Z| + |Z - x_0| < 2\eps r + \frac{1}{2}r < r,
  \end{align*}
  and thus, $\pr_Z \in B$. In particular, since $\pr_Z \in L_B$ and $b\beta_{\pom}(B,L_B) < \eps$, we have
  \begin{align}
    \label{estimate:z_point}
    \delta(Z) \le |Z - \pr_Z| + \delta(\pr_Z) \le 2\eps r + \eps r = 3\eps r < \frac{1}{4D} r
  \end{align}
  since $\eps < \tfrac{1}{12D}$. In particular, by \eqref{estimate:y_0_point} and \eqref{estimate:z_point}, we know that $Y_0 \notin \tfrac{1}{2} B \cap \Uc_{2\eps r}(L_B)$. On the other hand, since $Y_0 \in \gamma \cap \partial B(x_0,r/4)$, we know that $Y_0 \in B \cap \Omega$ and hence $Y_0 \in B^+$ by Lemma \ref{lemma:components}. In particular, $Y_0 \in \gamma \cap B^+ \setminus \Uc_{2\eps r}(L_B)$, which proves the claim.
\end{proof}

\begin{lemma}
  \label{lemma:subball}
  Let $\eps_0 > 0$ be as in Lemma \ref{lemma:components} and suppose that $0 < \eps < \tfrac{1}{8}\eps_0$. Let $B = B(x_0,r)$ be a ball with $x_0 \in \pom$, $L_B$ be a hyperplane such that $b\beta_{\pom}(B,L_B) < \eps$ and $y_0 \in \tfrac{1}{2} B\cap\pom$. Now we have
  \begin{enumerate}
    \item[$\bullet$] $b\beta_{\pom}(B(y_0,r/4),L_B) < 4 \eps < \tfrac{1}{2} \eps_0$,
    \item[$\bullet$] the set $\Omega \cap B(y_0,r/4) \setminus \Uc_{\eps r}(L_B)$ is convex,
    \item[$\bullet$] for any point $Y \in \Omega \cap B(y_0,r/4) \setminus \Uc_{2\eps r}(L_B)$ we have $\delta(Y) \ge \eps r$.
  \end{enumerate}
\end{lemma}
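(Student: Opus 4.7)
The plan is to prove the three bullet points in order, all reducing to routine estimates plus invocations of Lemma \ref{lemma:components}. First I would observe the basic inclusion $B(y_0,r/4)\subset B$: if $y_0\in\tfrac12 B$ and $z\in B(y_0,r/4)$, then $|z-x_0|\le |z-y_0|+|y_0-x_0|<r/4+r/2<r$. This is the one geometric fact used throughout.

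For the first bullet, I would just rescale both terms defining $b\beta$. For any $y\in B(y_0,r/4)\cap\pom\subset B\cap\pom$, the hypothesis $b\beta_{\pom}(B,L_B)<\eps$ gives $\dist(y,L_B)<\eps r$, so $\dist(y,L_B)/(r/4)<4\eps$; the same argument applied to $Y\in L_B\cap B(y_0,r/4)\subset L_B\cap B$ bounds the second sup by $4\eps$. Summing and using $\eps<\tfrac{1}{8}\eps_0$ yields $b\beta_{\pom}(B(y_0,r/4),L_B)<8\eps<\eps_0$, which is comfortably enough to invoke Lemma \ref{lemma:components} on $B(y_0,r/4)$ and gives the stated bound $<\tfrac12\eps_0$.

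For the second bullet, I would apply Lemma \ref{lemma:components} directly to $B$ itself (which is legal since $b\beta_{\pom}(B,L_B)<\eps<\eps_0$), obtaining convex components $B^+\subset\Omega$ and $B^-\subset\R^{n+1}\setminus\overline\Omega$ with $B\setminus\Uc_{\eps r}(L_B)=B^+\cup B^-$. Then
\[
\Omega\cap B(y_0,r/4)\setminus\Uc_{\eps r}(L_B)\;=\;B^+\cap B(y_0,r/4),
\]
because any point in the left side lies in $B\setminus\Uc_{\eps r}(L_B)=B^+\cup B^-$, and being in $\Omega$ forces it into $B^+$ (since $B^-\subset\R^{n+1}\setminus\overline\Omega$). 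This displayed set is an intersection of two convex sets, hence convex.

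The third bullet is a short contradiction argument, and this is the only step where one has to be careful with the triangle inequality. Take $Y\in\Omega\cap B(y_0,r/4)\setminus\Uc_{2\eps r}(L_B)$ and let $z\in\pom$ realize $\delta(Y)=|Y-z|$. If $\delta(Y)<\eps r$, then
\[
|z-x_0|\le |z-Y|+|Y-y_0|+|y_0-x_0|<\eps r+\tfrac14 r+\tfrac12 r<r,
\]
so $z\in B\cap\pom$ and the hypothesis on $B$ gives $\dist(z,L_B)<\eps r$. Combining,
\[
\dist(Y,L_B)\le |Y-z|+\dist(z,L_B)<\eps r+\eps r=2\eps r,
\]
contradicting $Y\notin\Uc_{2\eps r}(L_B)$. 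Hence $\delta(Y)\ge\eps r$. I do not expect any real obstacle here; the only thing to watch is the bookkeeping of the constants (the factor $4$ vs $8$ in bullet (1), and confirming $\eps<1/4$ is implied by $\eps<\tfrac{1}{8}\eps_0\le\tfrac18$), both of which fall out of the hypothesis $\eps<\tfrac{1}{8}\eps_0$.
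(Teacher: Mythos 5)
Your overall plan tracks the paper's: the inclusion $B(y_0,r/4)\subset B$ is exactly the paper's one geometric fact, bullet two comes from Lemma \ref{lemma:components}, and bullet three is the same triangle-inequality chase through $\dist(\cdot,L_B)$. Bullet two you treat a bit differently and, arguably, more cleanly: you apply Lemma \ref{lemma:components} to the large ball $B$ itself and identify $\Omega\cap B(y_0,r/4)\setminus\Uc_{\eps r}(L_B)$ as $B^+\cap B(y_0,r/4)$, whereas the paper applies Lemma \ref{lemma:components} to $B(y_0,r/4)$ (using bullet one to justify this). Both work; yours has the mild advantage that bullet two does not depend on bullet one. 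Bullet three you phrase as a contradiction while the paper argues directly, but the content (the nearest boundary point must lie in $B$, then $\dist(Y,L_B)\le \delta(Y)+\eps r$) is the same.

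There is, however, a real gap in bullet one. You bound each of the two suprema in the definition of $b\beta_{\pom}$ individually by $\eps$ (saying ``the hypothesis $b\beta_{\pom}(B,L_B)<\eps$ gives $\dist(y,L_B)<\eps r$''), then rescale each by $4$ and add, obtaining $8\eps$ --- not the $4\eps$ the lemma claims. This throws away a factor of two: the hypothesis controls the \emph{sum} of the two terms by $\eps$, not each term by $\eps$. The correct scaling is to factor out the $4$ from the entire expression:
\[
b\beta_{\pom}\bigl(B(y_0,\tfrac r4),L_B\bigr)
= 4\Bigl(\sup_{y\in B(y_0,r/4)\cap\pom}\tfrac{\dist(y,L_B)}{r}+\sup_{Y\in L_B\cap B(y_0,r/4)}\tfrac{\delta(Y)}{r}\Bigr)
\le 4\,b\beta_{\pom}(B,L_B)<4\eps,
\]
using $B(y_0,r/4)\subset B$ to shrink the domains of both suprema simultaneously. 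Your subsequent assertion that ``$8\eps<\eps_0$ \dots gives the stated bound $<\tfrac12\eps_0$'' does not follow: $\eps<\tfrac18\eps_0$ gives $8\eps<\eps_0$, not $8\eps<\tfrac12\eps_0$. The constant $4\eps$ is what the paper actually cites later (in the proof of Theorem \ref{theorem:local_john_harnack}, Part 4, where $4\eps<\tfrac12\min\{\ldots\}$ is used to verify the hypotheses of Lemma \ref{lemma:path}), so the stated bound is not merely cosmetic. The fix is one line and your argument otherwise stands.
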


\begin{proof}
  The second claim follows from the first claim combined with Lemma \ref{lemma:components}, and the first claim follows from the definition of $\beta$-numbers and the facts $B(y_0,r/4) \subset B$ and $b\beta_{\pom}(B,L_B) < \eps < \tfrac{1}{8}\eps_0$ in a straightforward way:
  \begin{align*}
    b\beta_{\pom}(B,L_B)
    &= \sup_{y \in B \cap \pom} \frac{\dist(y,L_B)}{r} + \sup_{Y \in L_B \cap B} \frac{\delta(Y)}{r} \\
    &\ge  \frac{1}{4} \sup_{y \in B(y_0,r/4) \cap \pom} \frac{\dist(y,L_B)}{\frac{r}{4}} + \sup_{Y \in L_B \cap B(y_0,r/4)} \frac{\delta(Y)}{\frac{r}{4}}
    = \frac{1}{4} b\beta_{\pom}(B(y_0,r/4),L).
  \end{align*}
  For the third claim, let $Y \in \Omega \cap B(y_0,r/4) \setminus \Uc_{2\eps r}(L_B)$. Since $y_0 \in \pom$, we know that $\delta(Y) < \tfrac{1}{4}r$. On the other hand, for any point $Z \in \R^{n+1} \setminus B$ we have
  \begin{align*}
    r \le |x_0 - Z| \le |x_0 - y_0| + |y_0 - Y| + |Y - Z| < \frac{1}{2}r + \frac{1}{4}r + |Y - Z|,
  \end{align*}
  and thus, $|Y - Z| > \tfrac{1}{4}r$. In particular, we have $\delta(Y) = \inf_{z \in \pom \cap B} |Y - z|$. Let $z_Y \in \pom \cap B$ be a point such that $\delta(Y) = |Y - z_Y|$. Since $Y \notin \Uc_{2\eps r}(L_B)$, we know that $\dist(Y,L_B) \ge 2\eps r$, and since $b\beta_{\pom}(B,L_B) < \eps$, we know that $\dist(z_Y,L_B) < \eps r$. In particular,
  \begin{align*}
    2\eps r \le \dist(Y,L_B) \le |Y - z_Y| + \dist(z_Y,L_B) < |Y - z_Y| + \eps r = \delta(Y) + \eps r,
  \end{align*}
  and therefore $\delta(Y) \ge \eps r$, as claimed.
\end{proof}

\section{Local John and exterior corkscrews imply Harnack chains}
\label{section:local_john_harnack}

In this section, we prove Theorem \ref{theorem:local_john_harnack}, i.e., that local John condition together with exterior corkscrew condition implies the existence of Harnack chains. Let $\Omega \subset \R^{n+1}$ be an open set with $n$-ADR boundary $\pom$, and suppose that $\Omega$ satisfies the local $D_1$-John condition and exterior corkscrew condition. Througout the section, $\D$ is a dyadic system on $\pom$.

\begin{proof}[Proof of Theorem \ref{theorem:local_john_harnack}]
  Let $X,Y \in \Omega$ with $\delta(X), \delta(Y) \ge \rho > 0$ and $|X-Y| < \Lambda \rho$ for $\Lambda \ge 1$. We start by noticing that by Theorem \ref{theorem:corkscrew_ur} we know that $\pom$ is UR, and by Lemma \ref{lemma:corkscrews_harmonic_measure} we know that harmonic measure belongs to the class $\text{weak-}A_\infty(\sigma)$.
  
  We will construct a path between $X$ and $Y$ that stays far away from the boundary and use this path to construct the Harnack chain between $X$ and $Y$. To ensure that we stay away from the boundary in a quantitative way, we have to be careful with the construction. This will make the construction quite technical and therefore we divide the proof into a few different parts.
  
  The argument we present below works as it is in the case $\diam(\pom) = \infty$. We discuss the other cases in the end of the proof.
  
  \textbf{Part 1:} \emph{choosing suitable cubes for $X$ and $Y$ for Lemma \ref{lemma:connective_set}.} Take a point $z_X \in \pom$ such that $\delta(X) = |X - z_X|$. Let $c_0$ and $C_0$ be the constants from Lemma \ref{lemma:bourgain}. Now, by Lemma \ref{lemma:bourgain}, since $X \in B(z_X, c_0 \cdot 2\delta(X)/c_0)$, we know that $\omega^X(\Delta(z_X, 2\delta(X)/c_0)) \ge 1/C_0$. Let us cover $\Delta(z_X, 2\delta(X)/c_0)$ by dyadic cubes $Q_i$ of the same side lengths such that $\ell(Q_i) \approx 2\delta(X)/c_0$ and $X \notin 4B_{Q_i}$ for any $i$. There are at most $c_n$ of these types of cubes $Q_i$. Since $\omega^X(\Delta(z_X, 2\delta(X)/c_0)) \ge 1/C_0$ and the cubes $Q_i$ cover $\Delta(z_X, 2\delta(X)/c_0)$, we know that there exists a cube $Q_X \in \{Q_i\}_i$ such that $\omega^X(Q_X) \ge (c_nC_0)^{-1}$. In addition, the cube $Q_X$ satisfies $\ell(Q_X) \approx \delta(X) \approx \dist(X,Q_X)$ with uniformly bounded implicit constants since
  \begin{enumerate}
    \item[$\bullet$] one of the cubes $Q_i$ contains the point $z_X$ and $\delta(X) = |X - z_X|$,
    \item[$\bullet$] there is only a uniformly bounded number of the cubes $Q_i$, and
    \item[$\bullet$] $\ell(Q_i) \approx 2\delta(X)/c_0$ for every $i$.
  \end{enumerate}
  Similarly, we can choose a cube $Q_Y$ that has the same properties but with respect to $Y$ instead of $X$.
  
  \textbf{Part 2:} \emph{choosing a local John point.} Let us consider the ball $B(z_X,D_1 r_0)$ where
  \begin{align}
    \label{defin:radius_r0} r_0 \coloneqq C_n \cdot \max\{\delta(X),\delta(Y),|X-Y|\}
  \end{align}
  for a large enough dimensional constant $C_n$ such that $Q_X, Q_Y \subset B(z_X, D_1 r_0)$. Let $Z_0 \in \Omega$ be a local John point for the ball $B(z_X, D_1 r_0)$. By the local John condition, we know that $B(Z_0, D_1 r_0/D_1) = B(Z_0,r_0) \subset \Omega$ and there exists $D_1$-non-tangential paths from the points of $\Delta(z_X, D_1 r_0)$ to $Z_0$. In particular, there exist $D_1$-non-tangential paths from the points on $Q_X$ and $Q_Y$ to $Z_0$.
  
  \textbf{Part 3:} \emph{choosing starting points for paths.} Let us consider the cube $Q_X$. By the choice of $Q_X$ and Lemma \ref{lemma:connective_set}, we know that there exist constants $\alpha \in (0,1]$ and $D_2 \ge 1$ (independent of $X$ and $Q_X$) and a subset $A_X \subset Q_X$ such that
  \begin{enumerate}
    \item[$\bullet$] $\sigma(A_X) \ge \alpha \, \sigma(Q_X)$, and
    \item[$\bullet$] there exist $D_2$-non-tangential paths from the points on $A_X$ to $X$.
  \end{enumerate}
  Let $\eps_0 > 0$ be as in Lemma \ref{lemma:components} and set
  \begin{align}
    \label{defin:choice_of_eps} \eps \coloneqq \frac{1}{8}\min\left\{\frac{1}{10}\eps_0, \frac{1}{12D_1}, \frac{1}{12D_2}\right\}.
  \end{align}
  By the Bilateral Weak Geometric Lemma (Lemma \ref{lemma:bwgl}) and Lemma \ref{lemma:subset_carleson}, we know that there exists a cube $R_X$ such that $b\beta_{\pom}(2B_{R_X}) < \eps$, $\sigma(R_X \cap A_X) > 0$ and $\ell(R_X) \approx \ell(Q_X)$ where the implicit constant depends only on $\alpha$ and the Carleson packing norm of the collection of cubes $Q$ such that $b\beta_{\pom}(2B_Q) > \eps$. Let us choose any point $\widetilde{z}_X \in R_X \cap A_X$. Similarly, we can choose sets $A_Y$ and $R_Y$ and a point $\widetilde{z}_Y$ for the point $Y$.
  
  \textbf{Part 4:} \emph{constructing paths between the local John point and $X$ and $Y$.} Let us recap:
  \begin{enumerate}
    \item[$\bullet$] $R_X \subset \pom$ is a dyadic cube such that $\ell(R_X) \approx \delta(X)$,
    \item[$\bullet$] $\eps > 0$ is a number defined in \eqref{defin:choice_of_eps} and we have $b\beta_{\pom}(2B_{R_X}) < \eps$,
    \item[$\bullet$] $\widetilde{z}_X \in R_X$ is a point such that there exists a $D_2$-non-tangential path from $\widetilde{z}_X$ to $X$,
    \item[$\bullet$] $Z_0 \in \Omega$ is the local John point for the ball $B(z_X,D_1 r_0)$, where $|X - z_X| = \delta(X)$ and $r_0 > 0$ is the radius defined in \eqref{defin:radius_r0}, and $\widetilde{z}_X \in B(z_X,D_1 r_0)$.
  \end{enumerate}
  Let $\gamma_1$ be a $D_1$-non-tangential path from $\widetilde{z}_X$ to $Z_0$ and $\gamma_2$ be a $D_2$-non-tangential path from $\widetilde{z}_X$ to $X$. Let $L_X$ be a hyperplane such that $b\beta_{\pom}(2B_{R_X},L_X) < \eps$ and consider the ball $B(\widetilde{z}_X,\ell(R_X)/2)$. Since we know that $\widetilde{z}_X \in R_X \subset B_{R_X} \subset 2B_{R_X}$, Lemma \ref{lemma:subball} gives us
  \begin{align*}
    b\beta_{\pom}(B(\widetilde{z}_X,\ell(R_X)/2),L_X) < 4\eps < \frac{1}{2} \min\left\{\frac{1}{10}\eps_0, \frac{1}{12D_1}, \frac{1}{12D_2}\right\}.
  \end{align*}
  By Lemma \ref{lemma:path} (applied for $b\beta_{\pom}(B(\widetilde{z}_X,\ell(R_X)/2),L_X) < 4\eps$), we know that both $\gamma_1$ and $\gamma_2$ intersect
  \begin{align*}
    B(\widetilde{z}_X,\ell(R_X)/2) \setminus \Uc_{2 \cdot 4\eps \cdot \ell(R_X)/2}(L_X) =  B(\widetilde{z}_X,\ell(R_X)/2) \setminus \Uc_{2\eps \cdot 2\ell(R_X)}(L_X).
  \end{align*}
  Let $Z_1 \in \gamma_1$ and $X_1 \in \gamma_2$ be any points such that $Z_1, X_1 \in B(\widetilde{z}_X,\ell(R_X)/2) \setminus \Uc_{2\eps \cdot 2\ell(R_X)}(L_X)$, and let $\gamma_3$ be the line segment connecting $Z_1$ to $X_1$. By Lemma \ref{lemma:subball}, we know that $\gamma_3$ is fully contained in $B(\widetilde{z}_X,\ell(R_X)/2) \setminus \Uc_{2\eps \cdot 2\ell(R_X)}(L_X)$ and we have
  \begin{align}
    \label{estimate:on_gamma3} \delta(\widehat{X}) \ge \eps \cdot 2\ell(R_X) \gtrsim \delta(X)
  \end{align}
  for every $\widehat{X} \in \gamma_3$, where the implicit constant depends on $\eps_0$, $D_1$, $D_2$ and the structural constants appearing in the proof. Since $\gamma_3$ is line segment that is fully contained in $B(\widetilde{z}_X,\ell(R_X)/2) \setminus \Uc_{2\eps \cdot 2\ell(R_X)}(L_X)$, we know that
  \begin{align}
    \label{estimate:length_gamma3} \ell(\gamma_3) \le \ell(R_X) \lesssim \delta(X).
  \end{align}
  See Figure \ref{figure:path_construction} for an illustration of the situation.
  
  \begin{figure}[ht]
    \includegraphics[scale=0.75]{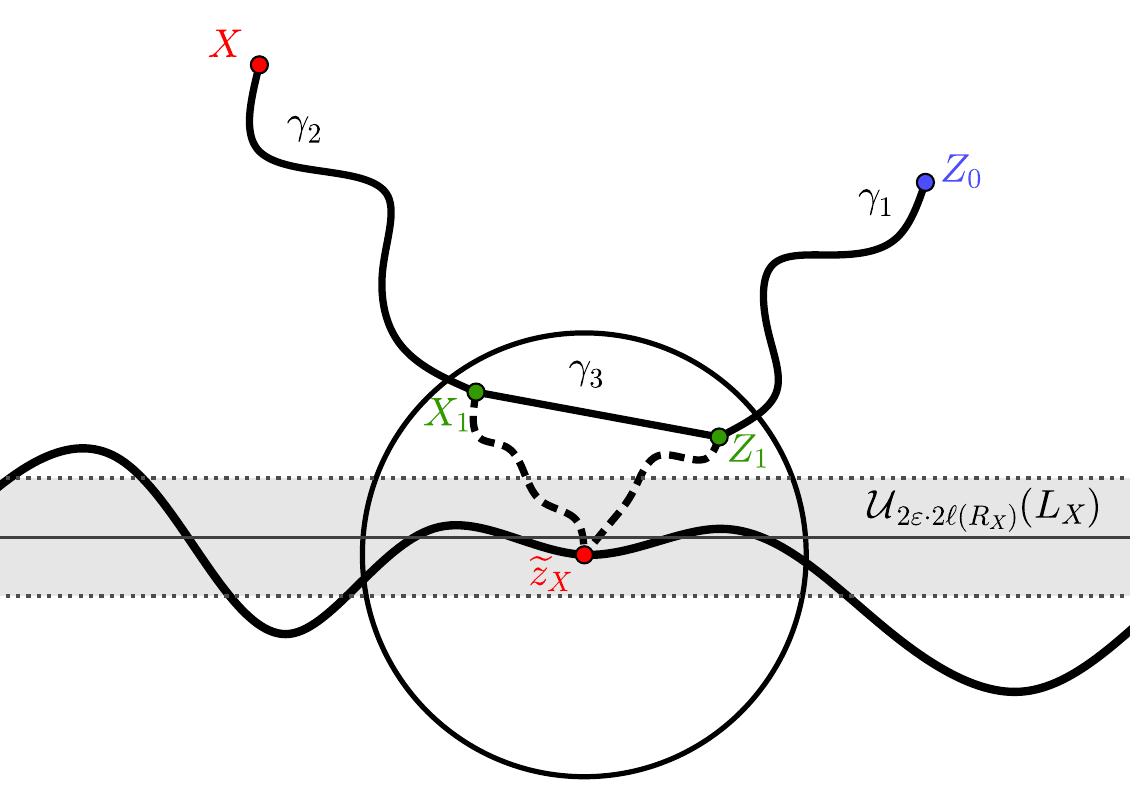}
    \caption{We construct a path between $X$ and the local John point $Z_0$ that stays far away from the boundary the following way. By a careful choice of a point $\widetilde{z}_X \in \pom$, we know that there exists a non-tangential path $\gamma_1$ from $\widetilde{z}_X$ to $Z_0$ (given by the local John condition) and a non-tangential path $\gamma_2$ from $\widetilde{z}_X$ to $X$ (given the weak-$A_\infty$ property of harmonic measure through Lemma \ref{lemma:connective_set}). By the BWGL, there exists a hyperplane $L_X$ such that $L_X$ approximates $B \cap \pom$ well for a suitable $\eps > 0$ and a ball $B = B(\widetilde{z}_X,\ell(R_X)/2)$, where $R_X$ is a dyadic cube containing $\widetilde{z}_X$ such that $\ell(R_X) \approx \delta(X)$. By applications of the BWGL, we know that there exist points $Z_1 \in \gamma_1 \cap B \cap \Omega$ and $X_1 \in \gamma_2 \cap B \cap \Omega$ that do not lie on the strip $\mathcal{U}_{2 \eps \cdot 2\ell(R_X)}(L_X)$, i.e., they lie reasonably far away from the boundary. Because $\Omega \cap B \setminus \Uc_{2\eps \cdot 2\ell(R_X)}(L_X)$ is convex, we can connect $Z_1$ and $X_1$ to each other with a line segment $\gamma_3$. We can now travel from $Z_0$ to $X$ by using pieces of the paths $\gamma_1$, $\gamma_3$ and $\gamma_2$.}
    \label{figure:path_construction}
  \end{figure}

  We now build a path $\gamma_X$ from $Z_0$ to $X$ by glueing together (after rescaling) the reversed part of $\gamma_1$ that travels from from $Z_1$ to $Z_0$, the whole $\gamma_3$ (from $Z_1$ to $X_1$) and the part of $\gamma_2$ that travels from $X_1$ to $X$. Similarly, we can choose a hyperplane $L_Y$ for $R_Y$ and points $Z_2, Y_1 \in B(\widetilde{z}_Y,\ell(R_Y)/2) \setminus \Uc_{2\eps \cdot 2\ell(R_Y)}(L_Y)$ and construct a path $\gamma_Y$ from $Z_0$ to $Y$ that passes through $Z_2$ and $Y_1$.
  
  \textbf{Part 5:} \emph{constructing the Harnack chains.} Let us consider the path $\gamma_X$. Since $\gamma_2$ is a non-tangential path (from $\widetilde{z}_X$ to $X$) and $\delta(X_1) \approx \delta(X)$, we know that $\ell(\gamma_2) \lesssim \delta(X)$ and $\delta(\widehat{X}) \approx \delta(X)$ for every $\widehat{X} \in \gamma_2(X_1,X)$. By \eqref{estimate:length_gamma3}, we know that $\ell(\gamma_3) \lesssim \delta(X)$, and by \eqref{estimate:on_gamma3}, $\delta(\widehat{X}) \approx \delta(X)$ for every $\widehat{X} \in \gamma_3$. Thus, for a suitable uniform implicit constant, we may cover $\gamma_X \setminus \gamma_1$ by a uniformly bounded number of balls $B_i$ with radii $r_i \approx \delta(X)$ satisfying $\dist(B_i,\pom) \approx \diam(B_i)$. As for $\gamma_1$, we notice that since $\gamma_1$ is a non-tangential path (from $\widetilde{z}_X$ to $Z_0$) given by the local John condition and $\delta(Z_1) \approx \delta(X)$, we have
  \begin{align*}
    \ell(\gamma_1(\widetilde{z}_X,Z_1)) \lesssim \delta(Z_1) \approx \delta(X) \ \ \text{ and } \ \ \ell(\gamma_1) \lesssim r_0 = C_n \cdot \max\{\delta(X),\delta(Y),|X-Y|\},
  \end{align*}
  and therefore $\delta(\widehat{X}) \gtrsim \delta(X)$ for all $\widehat{X} \in \gamma_1(Z_1,Z_0)$.
  In particular, we can cover $\gamma_X \setminus (\gamma_2 \cup \gamma_3)$ by $N_0 \lesssim r_0 / \delta(X)$ balls $B_k$ of radii $r_k \approx \delta(X)$ such that $\dist(B_k,\pom) \approx \diam(B_k)$ for each $k$. Recall that $\delta(X), \delta(Y) \ge \rho > 0$ and $|X-Y| < \Lambda \rho$ for $\Lambda \ge 1$. Let us consider different cases:
  \begin{enumerate}
    \item[$\bullet$] Suppose that $r_0 \lesssim |X-Y|$. Now we have
    \begin{align*}
      N_0 \lesssim \frac{|X-Y|}{\delta(X)} = \frac{\rho}{\delta(X)} \Lambda \le \Lambda.
    \end{align*}
    
    \item[$\bullet$] Suppose that $r_0 = C_n \delta(X)$. Then
    \begin{align*}
      N_0 \lesssim \frac{C_n \delta(X)}{\delta(X)} = C_n \le C_n \Lambda
    \end{align*}
    
    \item[$\bullet$] Suppose that $r_0 = C_n \delta(Y)$ and $\delta(Y) \gg |X-Y|$. Then, by the triangle inequality, $\delta(X) \approx \delta(Y)$ and
    \begin{align*}
      N_0 \lesssim \frac{C_n \delta(Y)}{\delta(X)} \approx \frac{C_n \delta(X)}{\delta(X)} = C_n \le C_n \Lambda.
    \end{align*}
  \end{enumerate}
  By almost identical arguments, we know that same estimates hold $\gamma_Y$. Thus, we can connect $X$ to $Y$ by taking the chain of balls $B_i$ that covers $\gamma_X$ and $\gamma_Y$. These balls satisfy $\dist(B_i,\pom) \approx \diam(B_i)$ for every $i$ with possibly different implicit constants. We may choose a constant $\widetilde{C} \ge 1$ such that
  \begin{align*}
    \frac{1}{\widetilde{C}} \diam(B_i) \le \dist(B_i,\pom) \le \widetilde{C} \diam(B_i)
  \end{align*}
  for every $i$ since we used only a finite number of different constants in the construction (six, to be more precise, since we built $\gamma_X$ and $\gamma_Y$ using three pieces for each path with uniform implicit constants for each piece). We needed a uniformly bounded number of balls to cover the two out of three pieces of $\gamma_X$ (same for $\gamma_Y$) and $N \lesssim \Lambda$ balls to cover the last piece of $\gamma_X$ (same for $\gamma_Y$). Thus, the number of balls $B_i$ is bounded by $\Cc \Lambda$, where $\Cc$ is a large enough constant depending only on $n$ and the ADR, UR, local John, weak-$A_\infty$ and corkscrew constants. Hence, $(B_i)_i$ is a Harnack chain between $X$ and $Y$. This completes the proof for the case $\diam(\pom) = \infty$.
  
  \
  
  Let us then consider the remaining two cases. Suppose that $\diam(\pom) < \infty$ and $\diam(\Omega) < \infty$. Now, if $\max\{\delta(X),\delta(Y),|X-Y|\} \ll \diam(\pom)$, things work just as earlier. Thus, we may assume that $\max\{\delta(X),\delta(Y),|X-Y|\} \approx \diam(\pom)$. Since $\diam(\pom) < \infty$, there exists a point $Z_0 \in \Omega$ such that for any $z \in \pom$ there exists a $D_1$-non-tangential path from $z$ to $Z_0$. Now the previous proof works when we simply choose this ''global`` local John point instead of the point we chose in Part 2.
  
  \
  
  Finally, suppose that $\diam(\pom) < \infty$ and $\diam(\Omega) = \infty$. Let us consider the following three cases:
  \begin{enumerate}
    \item[$\bullet$] If $\max\{\delta(X),\delta(Y),|X-Y|\} \ll \diam(\pom)$, we proceed as in the case ''$\diam(\pom) = \infty$``.
    
    \item[$\bullet$] If $\delta(X) \lesssim \diam(\pom)$, $\delta(Y) \lesssim \diam(\pom)$ and $|X-Y| \approx \diam(\pom)$, we proceed as in the case ''$\diam(\pom) < \infty$ and $\diam(\Omega) < \infty$``.
    
    \item[$\bullet$] If $\delta(X) \gg \diam(\pom)$ and $\delta(Y) \gg \diam(\pom)$, we can construct a Harnack chain from $X$ to $Y$ in the simple geometry of $\R^{n+1} \setminus B(Z,s)$ for $Z \in \Omega$ and $s \approx \diam(\pom)$.
  \end{enumerate}
   Thus, we may assume that $\delta(X) \lesssim \diam(\pom)$ and $\delta(Y) \gg \diam(\pom)$. The previous procedure does not work directly in this case because we cannot apply Lemma \ref{lemma:bourgain} and Lemma \ref{lemma:connective_set} for $\delta(Y) \gg \diam(\pom)$. Instead, we connect $Y$ to a point that is close enough to the boundary and then connect this point to $X$.
   
   Let $\widehat{y} \in \pom$ be a point such that $\delta(Y) = |Y - \widehat{y}|$. Let us consider the line segment $L$ with end points $\widehat{y}$ and $Y$. Since $L$ is a line segment and $\delta(Y) = |Y-\widehat{y}|$, we have $\delta(Z) = |Z - \widehat{y}|$ for any $Z \in L$. Let us take a point $\widehat{Y} \in L$ such that $\delta(\widehat{Y}) = \delta(X)$. Now we can use the earlier procedure to construct a Harnack chain $(B_i)_i$ from $X$ to $\widehat{Y}$, possibly using a ''global`` local John point as we did in the case ''$\diam(\pom) < \infty$ and $\diam(\Omega) < \infty$``. Since $\delta(X) = \delta(\widehat{Y}) \lesssim \diam(\pom)$ and $\delta(Y) \gg \diam(\pom)$, the length of the chain $(B_i)_i$ depends only on
   \begin{align*}
     \frac{|X - \widehat{Y}|}{\delta(X)} \lesssim \frac{|X - Y|}{\delta(X)} \le \frac{\rho}{\delta(X)} \Lambda \le \Lambda.
   \end{align*}
   We then continue this chain from $\widehat{Y}$ to $Y$ by covering the line segment from $\widehat{Y}$ to $Y$ using balls $\widehat{B}_k$ with radii $r_k \approx \delta(\widehat{Y}) = \delta(X)$ such that $\dist(\widehat{B}_k,\pom) \approx \diam(\widehat{B}_k)$. The number of balls $\widehat{B}_k$ that we need is approximately
  \begin{align*}
    \frac{\ell(L)}{\delta(X)}
    \approx \frac{\delta(Y)}{\delta(X)}
    \approx \frac{|X-Y|}{\delta(X)}
    \le \frac{\rho}{\delta(X)} \Lambda
    \le \Lambda,
  \end{align*}
  where we used that $\delta(X) \lesssim \diam(\pom)$ and $\delta(Y) \gg \diam(\pom)$ (and therefore we have $\delta(Y) \approx |X-Y|$). Hence, combining the chains $(B_i)_i$ and $(\widehat{B}_k)_k$ gives us a Harnack chain from $X$ to $Y$. This completes the proof of the last case.
\end{proof}

\section{Weak $(1,1)$ version of the Hofmann--Mitrea--Taylor Poincar\'e inequality, and quasiconvexity}
\label{section:hmt_and_quasiconvex}

Let $\Omega$ be a $2$-sided chord-arc domain. In this section, we consider some parts of the Hofmann--Mitrea--Taylor theory that we need for the proof of Theorem \ref{theorem:boundary_poincare}. Recall the definitions of the tangential gradient $\nablat f$, the Hofmann--Mitrea--Taylor Sobolev space $L^p_1(\pom)$ and the Hofmann--Mitrea--Taylor tangential derivatives $\partial_{\smallt,j,k} f$ and gradient $\nablahmt f$ in Section \ref{subsection:gradients}. It is straightforward to check that for a compactly supported Lipschitz function $f$ on $\pom$ we have $f \in L^p_1(\pom)$ for every $1 < p < \infty$ and hence, the Hofmann--Mitrea--Taylor gradient $\nabla_{\HMT} f$ exists.

\begin{remark}
  \label{remark:compact_support}
  In this section and in Section \ref{section:poincare}, we mostly consider compactly supported Lipschitz functions but this is enough for our purposes: by the results of Keith \cite{keith} (see Theorem \ref{theorem:keith} below) and Mourgoglou and the second named author \cite{mourgogloutolsa2} (see Lemma \ref{lemma:properties_of_hmt_objects} and Lemma \ref{lemma:norm_of_tangential_gradient} below), verifying Theorem \ref{theorem:boundary_poincare} for compactly supported Lipschitz functions implies Corollary \ref{corollary:poincare}, which then allows us to give up the assumption about compact support for Theorem \ref{theorem:boundary_poincare}. Assuming that our Lipschitz functions are compactly supported ensures that the Hofmann--Mitrea--Taylor gradients exist and we can use the machinery in \cite{hofmannmitreataylor} and \cite{mourgogloutolsa2} without additional considerations.
\end{remark}

Let us start by recalling two lemmas from \cite{mourgogloutolsa2}. First, when considering compactly supported Lipschitz functions, the norm of the Hofmann--Mitrea--Taylor gradient agrees with the norm of the tangential gradient almost everywhere:

\begin{lemma}[{\cite[Lemma 6.4]{mourgogloutolsa2}}]
  \label{lemma:properties_of_hmt_objects}
  Let $f$ be a compactly supported Lipschitz function on $\pom$. Then
  \begin{align}
    \label{identity:gradients} |\nablat f| = |\nablahmt f|
  \end{align}
  $\sigma$-almost everywhere.
\end{lemma}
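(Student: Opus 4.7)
The plan is to express both sides of \eqref{identity:gradients} in terms of an arbitrary compactly supported Lipschitz extension $\widetilde f \colon \ree \to \R$ of $f$ (obtained for example via McShane's extension), and then observe that the Euclidean norms agree pointwise $\sigma$-a.e.

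For $\nablat f$: by Rademacher's theorem $\widetilde f$ is differentiable $\mathcal L^{n+1}$-a.e., and $\pom$ is $n$-rectifiable by Theorem \ref{theorem:corkscrew_ur}. A standard argument with Lipschitz parametrizations of $\pom$ then shows that for $\sigma$-a.e.\ $x \in \pom$ the restriction $\widetilde f|_{x + T_x \pom}$ is differentiable at $x$, and that $\nablat f(x)$ equals the orthogonal projection of $\nabla \widetilde f(x)$ onto $T_x \pom$, i.e.\
\[
\nablat f(x) = \nabla \widetilde f(x) - \bigl(\nu(x) \cdot \nabla \widetilde f(x)\bigr)\, \nu(x) \qquad \sigma\text{-a.e.},
\]
so $|\nablat f|^2 = |\nabla \widetilde f|^2 - (\nu \cdot \nabla \widetilde f)^2$ $\sigma$-a.e.

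For $\nablahmt f$: the key intermediate claim is the pointwise identity
\[
\partial_{\smallt,j,k} f = \nu_j\, \partial_k \widetilde f - \nu_k\, \partial_j \widetilde f \qquad \sigma\text{-a.e.}
\]
I would prove it by applying the divergence theorem on $\Omega$ (a set of locally finite perimeter under the standing 2-sided CAD assumption, by \cite[Corollary 3.14]{hofmannmitreataylor}) to two different Lipschitz vector fields tested against $\varphi \in C^\infty_{\mathrm c}(\ree)$. The field $G$ with $G_j = \widetilde f\,\partial_k\varphi$, $G_k = -\widetilde f\,\partial_j\varphi$, and other components zero has $\mathrm{div}\,G = (\partial_j\widetilde f)(\partial_k\varphi) - (\partial_k\widetilde f)(\partial_j\varphi)$ and boundary flux $f(\nu_j\partial_k\varphi - \nu_k\partial_j\varphi)$. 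The field $H$ with $H_j = (\partial_k\widetilde f)\,\varphi$, $H_k = -(\partial_j\widetilde f)\,\varphi$, and other components zero has the same divergence (once the mixed second-order terms cancel) and boundary flux $(\nu_j\partial_k\widetilde f - \nu_k\partial_j\widetilde f)\,\varphi$. Equating the two boundary integrals and invoking the defining duality of $\partial_{\smallt,j,k} f$ from Definition \ref{defin:hmt_gradient} yields the claim. Plugging it into $(\nablahmt f)_j = \sum_k \nu_k\, \partial_{\smallt,j,k} f$ and using $|\nu|=1$ $\sigma$-a.e., I obtain
\[
(\nablahmt f)_j = \nu_j\,(\nu\cdot\nabla\widetilde f) - \partial_j \widetilde f \qquad \sigma\text{-a.e.,}
\]
whence $|\nablahmt f|^2 = |\nabla \widetilde f|^2 - (\nu\cdot\nabla\widetilde f)^2 = |\nablat f|^2$ $\sigma$-a.e.\ by the first step.

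The main obstacle is justifying the computation with the field $H$ when $\widetilde f$ is only Lipschitz on a set $\Omega$ of merely locally finite perimeter: the mixed-partial cancellation in $\mathrm{div}\,H$ is not literal at this regularity. The expected remedy is to mollify $\widetilde f$ to obtain smooth approximations $\widetilde f_\varepsilon$, carry out both identities for $\widetilde f_\varepsilon$ (where the cancellation is classical and the divergence theorem is standard), and pass to the limit using $\widetilde f_\varepsilon \to \widetilde f$ uniformly and $\nabla \widetilde f_\varepsilon \to \nabla \widetilde f$ in $L^1_{\mathrm{loc}}$, together with $\sigma(\pom \setminus \partial_*\Omega) = 0$ (Theorem \ref{theorem:corkscrew_ur}) to identify surface integrals over the topological boundary with those over the reduced boundary.
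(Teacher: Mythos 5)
The paper does not prove this lemma; it imports it verbatim from \cite[Lemma 6.4]{mourgogloutolsa2}, so there is no internal proof to compare against. Judged on its own terms, your algebraic skeleton is correct and recovers the sharper statement $\nablahmt f = -\nablat f$ that the paper mentions in passing (minor nit: with your conventions $\operatorname{div} H = -\operatorname{div} G$, not the same divergence, but the sign washes out in the final identity).

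The genuine gap is the passage from smooth $\widetilde f$ to Lipschitz $\widetilde f$, and it is deeper than the mixed-partial cancellation you flag. Both of your intermediate formulas, $\nablat f = \nabla \widetilde f - (\nu\cdot\nabla\widetilde f)\nu$ and $\partial_{\smallt,j,k} f = \nu_j \partial_k\widetilde f - \nu_k\partial_j\widetilde f$, use the \emph{full} gradient $\nabla\widetilde f$ evaluated $\sigma$-a.e.\ on $\pom$. For a merely Lipschitz extension, Rademacher only gives $\mathcal L^{n+1}$-a.e.\ differentiability, which says nothing on the Lebesgue-null set $\pom$; and mollification does not rescue this, because $\nabla\widetilde f_\varepsilon \to \nabla\widetilde f$ in $L^1_{\loc}(\R^{n+1})$ carries no information about convergence of traces on $\pom$. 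In the divergence-theorem computation, the $G$-side limit only reproduces the defining duality you started from, while the $H$-side limit $\int_{\pom}(\nu_j\partial_k\widetilde f_\varepsilon - \nu_k\partial_j\widetilde f_\varepsilon)\varphi\,d\sigma$ requires an actual trace-convergence statement that you have not supplied. The missing ingredient is that $\nu_j\partial_k - \nu_k\partial_j$ is a directional derivative along $\nu_j e_k - \nu_k e_j \in T_x\pom$, hence an \emph{intrinsic tangential} derivative; by the rectifiability and Lipschitz structure on $\pom$ (via \cite[Thm.~10.2, 11.4]{maggi}, cited in the paper), this quantity is determined $\sigma$-a.e.\ by $f = \widetilde f|_{\pom}$ alone and can be identified with $\nu_j(\nablat f)_k - \nu_k(\nablat f)_j$, which then cancels the ill-defined $\nabla\widetilde f$-dependence from both sides. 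Without making that identification precise the argument does not close.
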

In fact, \cite[Lemma 6.4]{mourgogloutolsa2} shows us that $\nablat f = -\nablahmt f$ almost everywhere, but we only need the comparability of the norms. We note that \cite[Lemma 6.4]{mourgogloutolsa2} is formulated for bounded domains but a routine inspection of the proof shows us that it holds for compactly supported Lipschitz functions also in unbounded domains.

Furthermore, for Lipschitz functions, the norm of the tangential gradient (and hence the norm of the Hofmann--Mitrea--Taylor gradient) agrees almost everywhere with the ''local Lipschitz constant`` function:

\begin{lemma}[{\cite[Lemma 2.2]{mourgogloutolsa2}}]
  \label{lemma:norm_of_tangential_gradient}
  Let $f$ be a Lipschitz function on $\pom$. Then
  \begin{align}
    \label{bound:norm_of_tangential_gradient}
    |\nablat f(x)| = \limsup_{\pom \ni y \to x} \frac{|f(x) - f(y)|}{|x-y|} \approx \limsup_{r \to 0} \fint_{\Delta(x,r)} \frac{|f(x) - f(y)|}{|x-y|} \, d\sigma(y)
  \end{align}
  for $\sigma$-a.e. $x \in \pom$.
\end{lemma}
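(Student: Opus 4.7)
The plan is to reduce the infinitesimal behaviour of $f$ on $\pom$ to a linearization on the approximate tangent plane, exploiting that $\pom$ is uniformly rectifiable by Theorem~\ref{theorem:corkscrew_ur}. First I would extend $f$ to a Lipschitz function $\widetilde{f} : \R^{n+1} \to \R$ preserving the Lipschitz constant (via McShane), and then restrict attention to the $\sigma$-a.e.\ set of $x \in \pom$ where the approximate tangent plane $T := T_x \pom$ exists, the tangential gradient $\nablat f(x)$ is well-defined as the gradient of $\widetilde f|_{x+T}$ at $x$, and the rescaled surface measures $r^{-n}\sigma(x + r\,\cdot)$ converge weakly to a positive constant multiple of $\Hc^n \llcorner T$. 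All three properties hold $\sigma$-a.e.\ by the results from \cite{maggi} cited right after Definition~\ref{defin:tangential:gradient}.

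For the first equality, my plan is to decompose $y - x = v + w$ with $v \in T$ and $w \in T^\perp$, and to observe that for any $\theta > 0$ the ``good'' set $\{y \in \pom : |w| \le \theta|v|\}$ has $\Hc^n$-density one at $x$. On the good set, combining the tangential differentiability of $\widetilde{f}|_{x+T}$ at $x$ with the crude normal estimate $|\widetilde{f}(y) - \widetilde{f}(x+v)| \le \Lip(\widetilde{f})\,|w|$ should give
\begin{equation*}
    \frac{|f(y) - f(x)|}{|y-x|} \;\le\; |\nablat f(x)| + C\theta + o(1)
\end{equation*}
as $y \to x$; letting then $\theta \to 0$ yields the $\le$ direction. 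For the reverse inequality I would pick a unit vector $v^* \in T$ with $\langle \nablat f(x), v^*\rangle = |\nablat f(x)|$ and use the flat tangent measure to produce a sequence $y_k \in \pom$ with $y_k \to x$ and $(y_k - x)/|y_k - x| \to v^*$; along such a sequence the ratio tends to $|\nablat f(x)|$.

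For the comparability with the integral average, the $\lesssim$ direction follows by integrating the same pointwise upper bound on the good set and observing that the bad set contributes $o(1)$ to the average because its relative $\sigma$-measure vanishes and the integrand is bounded by $\Lip(\widetilde{f})$. For the $\gtrsim$ direction, my plan is to restrict the integration to a thin angular sector $K \subset T$ about $v^*$, so that the $\sigma$-relative measure of the sector in $\Delta(x,r)$ is bounded below by a positive constant $c_K > 0$ for all small $r$, while the integrand on it is close to $|\nablat f(x)|$. The hard part will be making this lower directional density bound quantitative: the approximate tangent plane alone only gives density one in the whole cone around $x+T$, so one must use the full weak convergence of the blow-up of $\sigma$ to a positive multiple of $\Hc^n \llcorner T$ together with the $n$-ADR assumption to prevent mass inside the tangent cone from concentrating away from $v^*$.
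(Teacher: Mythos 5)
The paper does not contain a proof of this statement; it is cited verbatim from \cite[Lemma 2.2]{mourgogloutolsa2}, so there is no in-text argument to compare against. Judged on its own merits, your linearization via the approximate tangent plane is the right framework, and the parts concerning the integral comparability and the lower bound in the pointwise part (producing a sequence $y_k\to x$ with $(y_k-x)/|y_k-x|\to v^*$ and evaluating the difference quotient along it) are sound modulo routine bookkeeping. There is, however, a genuine gap in your argument for the upper bound in the first, \emph{pointwise} equality.

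You bound $|f(y)-f(x)|/|y-x|$ only for $y$ in the ``good'' cone $\{y:\,|w|\le\theta|v|\}$, and you justify neglecting the rest by observing that the good set has $\Hc^n$-density one at $x$. But density one is a measure-theoretic statement, while $\limsup_{\pom\ni y\to x}$ is pointwise: it sees \emph{every} sequence $y_k\to x$ in $\pom$, including sequences lying entirely outside the good set, along which the quotient could be comparable to $\Lip(f)$ independently of $|\nablat f(x)|$. Without ADR this actually happens: take $E=(\R\times\{0\})\cup\bigcup_k E_k$ with $E_k$ a circle of radius $2^{-2k}$ centred at $(2^{-k},2^{-k-1})$, and $f$ Lipschitz with $f\equiv 0$ on $\R\times\{0\}$ and $f\equiv 2^{-k-1}$ on $E_k$; then the approximate tangent line $T_0E=\R\times\{0\}$ exists and $\nablat f(0)=0$, yet $\limsup_{E\ni y\to 0}|f(y)|/|y|\gtrsim 1$. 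What closes the gap in the present setting is the Ahlfors--David regularity of $\pom$, which at $\sigma$-a.e.\ $x$ upgrades the \emph{approximate} tangent plane to a \emph{strong} one: if $y_k\in\pom\cap B(x,r_k)$ with $r_k\to0$ and $\dist(y_k,x+T_x\pom)\ge\eps r_k$, then ADR places $\gtrsim(\eps r_k)^n$ of $\sigma$-mass in $B(y_k,\eps r_k/2)\subset B(x,2r_k)$ at distance $\gtrsim\eps r_k$ from $x+T_x\pom$, contradicting the weak convergence of $r^{-n}\sigma(x+r\,\cdot)$ to $\Hc^n|_{T_x\pom}$. With this in hand, $\pom\cap B(x,r)$ lies in the good cone for all small $r$, and your estimate then controls the full $\limsup$. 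You should insert this ADR step explicitly; the density-one observation alone suffices for the integral part of the lemma (where a small bad set contributes $o(1)$), but not for the pointwise identity.
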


As a part of their extensive work, Hofmann, Mitrea and Taylor proved the following weak $(p,p)$-Poincar\'e inequality with a tail:

\begin{theorem}[{\cite[Proposition 4.13]{hofmannmitreataylor}}]
  \label{theorem:hmt_poincare}
  Let $1 < p < \infty$. There exists a constant $C = C(\Omega,p)$ such that for every $f \in L^p_1(\pom)$, $x \in \pom$ and $r > 0$ we have
  \begin{align}
    \nonumber \left( \fint_{\Delta} |f - \langle f \rangle_{\Delta}|^p \, d\sigma \right)^{1/p} 
    & \le Cr \left( \fint_{5\Delta} |\nablahmt f|^p \, d\sigma \right)^{1/p} \\
    \label{ineq:hmt_poincare} &\quad + Cr \sum_{j=2}^{\infty} \frac{2^{-j}}{\sigma(2^j \Delta)} \int_{2^j \Delta \setminus 2^{j-1} \Delta} |\nablahmt f| \, d\sigma,
  \end{align}
  where $\Delta \coloneqq \Delta(x,r)$.
\end{theorem}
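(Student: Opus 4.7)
The plan is to use the Hofmann--Mitrea--Taylor layer potential machinery to represent $f(\xi)-f(\eta)$ via a boundary integral involving $\nablahmt f$, and then to estimate this integral by decomposing $\pom$ into the local region $5\Delta$ and the dyadic annuli $A_j:=2^j\Delta\setminus 2^{j-1}\Delta$ for $j\ge 2$. By density of compactly supported Lipschitz functions in $L^p_1(\pom)$, it suffices to handle such $f$, for which $\nablahmt f$ is a bona fide $L^p$ function.

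The crux is a boundary Green-type identity. Applying Green's third identity to the harmonic extension of $f$ (which has good nontangential behaviour because $\Omega$ is a CAD) and then integrating by parts via the HMT duality $\int_{\pom} f\,\partial_{\smallt,k,j}\varphi\,d\sigma=\int_{\pom}\varphi\,\partial_{\smallt,j,k}f\,d\sigma$ against a suitably regularized version of $E(\cdot-\xi)-E(\cdot-\eta)$ (with $E$ the fundamental solution of $-\Delta$ in $\R^{n+1}$) produces a pointwise representation $f(\xi)-f(\eta)=\int_{\pom}\vec K(\xi,\eta;z)\cdot\nablahmt f(z)\,d\sigma(z)$; averaging in $\eta\in\Delta$ yields
\[
f(\xi)-\langle f\rangle_\Delta=\int_{\pom}\vec K_\Delta(\xi,z)\cdot\nablahmt f(z)\,d\sigma(z),\qquad \vec K_\Delta(\xi,z):=\fint_{\Delta}\vec K(\xi,\eta;z)\,d\sigma(\eta),
\]
where $\vec K$ is built from $E$, its first and second derivatives, and the outer normal $\nu(z)$.

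For the local piece $z\in 5\Delta$ the map $g\mapsto \int_{5\Delta}\vec K_\Delta(\xi,z)\cdot g(z)\,d\sigma(z)$ is, after extracting an $r$-scale factor produced by the averaging in $\eta$, a Calder\'on--Zygmund-type operator of order $-(n-1)$, and its $L^p(\pom)$-boundedness (for $1<p<\infty$) follows from the David--Semmes singular integral theory on the UR set $\pom$ (uniform rectifiability being supplied by Theorem \ref{theorem:corkscrew_ur}); this yields the first term on the right of \eqref{ineq:hmt_poincare}. For the tail $z\in A_j$ the combined smoothness of $\partial E$ away from the diagonal, the averaging in $\eta\in\Delta$, and the cancellation built into the tangential combinations $\nu_j\partial_k-\nu_k\partial_j$ give the decisive pointwise bound
\[
|\vec K_\Delta(\xi,z)|\;\lesssim\;\frac{r^2}{|x-z|^{n+1}}\;\approx\;\frac{r\cdot 2^{-j}}{\sigma(2^j\Delta)},\qquad \xi\in\Delta,\; z\in A_j.
\]
Multiplying by $|\nablahmt f(z)|$, integrating over $A_j$ and summing in $j\ge 2$ produces the tail of \eqref{ineq:hmt_poincare}; integrating in $\xi\in\Delta$ and using Minkowski's inequality (or the $L^p$ boundedness directly for the local piece) gives the final $L^p$-form.

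The hardest step is setting up the representation formula rigorously: the HMT derivatives $\partial_{\smallt,j,k}f$ are defined only through duality against $C^\infty_c(\R^{n+1})$, so one cannot directly plug in the singular test function $\varphi(\cdot)=E(\cdot-\xi)-E(\cdot-\eta)$, and a careful regularization at the poles together with principal-value arguments for the arising singular integrals on the UR set $\pom$ is required. The second delicate point is extracting the second-order cancellation responsible for the $2^{-j}$ gain in the tail: it comes from the near-vanishing of $\fint_{\Delta}(\eta-x)\,d\sigma(\eta)$ together with the tangential algebraic structure $\nu_j\partial_k-\nu_k\partial_j$, and this is precisely where the $2$-sided CAD hypothesis (through uniform rectifiability of $\pom$ and $\sigma(\pom\setminus\partial_*\Omega)=0$, both provided by Theorem \ref{theorem:corkscrew_ur}) is used essentially.
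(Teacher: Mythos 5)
Your plan does not follow the Hofmann--Mitrea--Taylor route. The paper cites Theorem~\ref{theorem:hmt_poincare} from \cite{hofmannmitreataylor} and adapts the same machinery in its proof of the weak $(1,1)$ version (Lemma~\ref{lemma:hmt1_poincare}); that proof proceeds through the jump relation for the \emph{double layer potential}, namely $f(x)=\Dc^+f(x)-\Dc^-f(x)$, then chooses $a_\Delta=u(X_\Delta^+)-u(X_\Delta^-)$ where $u=\Dc f-X\cdot h_\Delta$ and $X_\Delta^\pm$ are interior and exterior corkscrew points, and controls $|u^\pm(x)-u(X_\Delta^\pm)|$ by the mean value theorem along the non-tangential paths furnished by the $2$-sided local John condition, landing on the non-tangential maximal function $r\,N_*(\chi_{B(\xi,Ar)}|\nabla u|)(x)$. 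The identity $\nabla\Dc f(X)=\big(\sum_i\int_{\pom}\partial_i\Ec(X-y)\,\partial_{\smallt,j,i}f(y)\,d\sigma(y)\big)_j$ (formula (3.6.29) in \cite{hofmannmitreataylor}) is invoked only at the interior point $X\in\Omega$, where the kernel is smooth, which is exactly what makes the integration by parts against the HMT duality legitimate. The split into a local piece (treated via $L^2$-boundedness of the Riesz transform on the UR set, hence weak $(1,1)$ for the maximal truncation, plus Hardy--Littlewood) and a tail (kernel smoothness) happens \emph{after} one has passed to $\nabla u$ at interior points.

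Your proposal instead posits a boundary representation formula $f(\xi)-f(\eta)=\int_{\pom}\vec K(\xi,\eta;z)\cdot\nablahmt f(z)\,d\sigma(z)$ with $\xi,\eta\in\pom$, obtained by pairing the HMT derivatives against the singular test function $E(\cdot-\xi)-E(\cdot-\eta)$. This is precisely the step that the HMT argument is designed to avoid, and you flag the difficulty yourself but do not resolve it: the definition of $\partial_{\smallt,j,k}f$ is by duality against $C^\infty_{\text{c}}(\R^{n+1})$ test functions, and a regularization-plus-principal-value argument for such pairings on a general UR boundary is highly nontrivial and is not part of the HMT toolkit. In effect, you are attempting to prove a boundary ``fundamental theorem of calculus'' from scratch, whereas the actual proof manufactures the comparison of $f$ at two points by detouring through a single interior corkscrew point and using the local John paths to control the detour. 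Two further consequences of bypassing this structure: (i) the factor of $r$ that in the real proof comes from the length of the non-tangential paths (and is packaged into $N_*(\chi_{B(\xi,Ar)}|\nabla u|)$) is, in your sketch, attributed vaguely to ``averaging in $\eta$'' and to the antisymmetry $\nu_j\partial_k-\nu_k\partial_j$, without a mechanism; and (ii) you never use the corkscrew/local John hypothesis, which is not an optional convenience but the geometric engine of the proof (indeed, the HMT Poincar\'e inequality is stated in \cite{hofmannmitreataylor} under a local $(D_0,R_0)$-John hypothesis precisely because these paths are what make the argument run). So while the local/tail decomposition and the final arithmetic of scales are consistent with the statement, the central representation formula and the way the $r$-factors are produced constitute a genuine gap.
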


Some remarks related to the formulation of Theorem \ref{theorem:hmt_poincare} are in order. In \cite{hofmannmitreataylor}, Theorem \ref{theorem:hmt_poincare} is formulated assuming only that $\pom$ is Ahlfors--David regular and $\Omega$ satisfies the $2$-sided local $(D_0,R_0)$-John condition (recall Definition \ref{defin:local_local_john}), and the result holds for all $r \in (0,R_0)$ instead of all $r \in (0,\diam(\pom))$. As we noted in Example \ref{example:local_local_john}, if $R_0$ is not large enough, the $2$-sided local $(D_0,R_0)$-John condition is not strong enough to imply that $\pom$ is connected. Since Heinonen--Koskela type weak Poincar\'e inequalities (recall Definition \ref{defin:weak_poincare}) imply connectivity (see e.g. \cite[Theorem 17.1]{cheeger}), we know that the Hofmann--Mitrea--Taylor Poincar\'e inequalities may hold even when Heinonen--Koskela type weak Poincar\'e inequalities fail.

This being said, if $\Omega$ is a $2$-sided chord-arc domain (as we assumed in the beginning of this section), then the Hofmann--Mitrea--Taylor Poincar\'e inequalities hold as in Theorem \ref{theorem:hmt_poincare} and they self-improve to Heinonen--Koskela type Poincar\'e inequalities. The first claim is due to \cite[Lemma 3.13]{hofmannmitreataylor}: any NTA domain satisfies the local $(D_0,R_0)$-John condition where $R_0$ is the upper bound for the scales of the corkscrew conditions. Since we assume that the corkscrew conditions hold for any $0 < r < \diam(\pom)$, we get the local $D_0$-John condition. The second claim follows from Corollary \ref{corollary:poincare} which we prove using a Hofmann--Mitrea--Taylor type Poincar\'e estimate (see Lemma \ref{lemma:hmt1_poincare} below) in the next section.

For the case $p > 1$, we can use directly the estimate \eqref{ineq:hmt_poincare} together with the arguments in the next section to obtain a weak $p$-Poincar\'e inequality. However, for the case $p=1$, we need to revisit estimate \eqref{ineq:hmt_poincare} since an inspection of its proof shows us that a simple limiting argument does not work. Because of this, we prove the following weak $(1,1)$ type version of the Hofmann--Mitrea--Taylor Poincar\'e inequality:

\begin{lemma}
 \label{lemma:hmt1_poincare}
  There exists  a constant $C = C(\Omega)$ such that the following holds. For every compactly supported Lipschitz function $f$ on $\pom$ and every $\xi \in \pom$, $r > 0$ and $\Delta \coloneqq \Delta(\xi,r)$, there exists $a_\Delta \in \R$ such that
  \begin{align}
    \nonumber \sup_{\lambda>0} \lambda \,\sigma\big(\big\{y\in\Delta \colon |f(y) - a_\Delta|>\lambda\big\}\big)
    & \le Cr  \int_{5\Delta} |\nablahmt f| \, d\sigma \\
    \label{ineq:hmt1_poincare} &\quad+ Cr \sum_{j=2}^{\infty} 2^{-j(n+1)} \int_{2^j \Delta \setminus 2^{j-1} \Delta} |\nablahmt f| \, d\sigma.
  \end{align}
\end{lemma}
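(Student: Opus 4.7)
The plan is to revisit the argument behind Theorem \ref{theorem:hmt_poincare} (i.e.\ \cite[Proposition 4.13]{hofmannmitreataylor}) and run it directly at the endpoint $p=1$, where any use of $L^p$-bounded operators with $p$-dependent norms (in particular fractional integrals of order $1$) must be replaced by their weak-$(1,1)$ or Kolmogorov--type analogues. More precisely, following the Hofmann--Mitrea--Taylor layer-potential identity of \cite{hofmannmitreataylor}---which is applicable here since a compactly supported Lipschitz $f$ belongs to $L^p_1(\pom)$ for every $1 < p < \infty$ so that $\nablahmt f$ is well-defined---we aim to produce a constant $a_\Delta$ (essentially an appropriate layer-potential value at the center $\xi$) and a pointwise estimate
\begin{equation*}
  |f(y) - a_\Delta| \le C \int_{5\Delta} \frac{|\nablahmt f(z)|}{|y-z|^{n-1}}\, d\sigma(z) + C \, \mathcal{T}(\Delta, \nablahmt f), \qquad y \in \Delta,
\end{equation*}
where $\mathcal{T}$ is a quantity independent of $y$ that satisfies $\mathcal{T} \lesssim \sum_{j \ge 2} 2^{-j(n+1)} r^{-n} \int_{2^j \Delta \setminus 2^{j-1}\Delta} |\nablahmt f|\, d\sigma$. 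The tail decay factor $2^{-j(n+1)} r^{-n} \approx 2^{-j}/\sigma(2^j \Delta)$ is exactly the one appearing in the tail of \eqref{ineq:hmt_poincare} and is extracted via the same cancellation (subtracting reference values of the kernel at $\xi$) that underlies the $(p,p)$ argument in \cite[Proposition 4.13]{hofmannmitreataylor}.

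With the pointwise bound above in hand, \eqref{ineq:hmt1_poincare} splits cleanly into a local and a tail contribution. For the local part, the Ahlfors--David regularity of $\pom$ gives $\sup_{z \in 5\Delta} \int_\Delta |y-z|^{1-n}\, d\sigma(y) \lesssim r$, whence by Fubini (a Kolmogorov--type inequality),
\begin{equation*}
  \Big\| \int_{5\Delta} |y-z|^{1-n} |\nablahmt f(z)|\, d\sigma(z) \Big\|_{L^1(\Delta)} \lesssim r \int_{5\Delta} |\nablahmt f|\, d\sigma,
\end{equation*}
which is even stronger than the $L^{1,\infty}$ bound needed. For the tail, since $\mathcal{T}$ is independent of $y \in \Delta$, we have trivially $\lambda\, \sigma(\{y \in \Delta : C\mathcal{T} > \lambda\}) \le C\sigma(\Delta)\,\mathcal{T}$, and by ADR we get $\sigma(\Delta)\, \mathcal{T} \lesssim r \sum_{j \ge 2} 2^{-j(n+1)} \int_{2^j \Delta \setminus 2^{j-1}\Delta} |\nablahmt f|\, d\sigma$. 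The two estimates combine through $\sigma(\{|A|+|B| > \lambda\}) \le \sigma(\{|A| > \lambda/2\}) + \sigma(\{|B| > \lambda/2\})$ to yield \eqref{ineq:hmt1_poincare}.

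The main obstacle is Step 1: extracting the pointwise estimate with the sharp $2^{-j(n+1)}$ tail decay. As noted in the text preceding the lemma, the $L^p$ constants in the proof of Theorem \ref{theorem:hmt_poincare} blow up as $p \to 1$, so a simple limiting argument fails, and one must re-examine the layer-potential decomposition to isolate the local and tail kernels by hand, verifying, using the cancellation from subtracting appropriate reference values of the kernel at $\xi$, that the tail obeys the stated decay. Once this is done, the Kolmogorov/layer-cake estimates above are routine and deliver the weak-$(1,1)$ bound.
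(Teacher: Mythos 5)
The proposal takes a genuinely different route from the paper. The paper replaces $|u^\pm(y) - u(X_\Delta^\pm)|$ by $r\,N_*(\chi_{B(\xi,Ar)}|\nabla u|)(y)$ via the mean value theorem (a length-times-sup bound along the local John path), then bounds $N_*(\chi_{B(\xi,Ar)}|\nabla u|)(y)$ pointwise by $M(\chi_{2A\Delta}\nablahmt f)(y) + \sum_{i,j}\Rc_{i,\sigma,*}(\chi_{2A\Delta}\partial_{\smallt,j,i}f)(y)$ plus a $y$-independent tail, and finally invokes the weak-$(1,1)$ boundedness of $M$ and of the maximal truncated Riesz transform $\Rc_{\sigma,*}$ on $\pom$ (the latter coming from $L^2$-boundedness of Riesz transforms on UR sets plus Calder\'on--Zygmund theory). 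Your route instead aims for a pointwise Riesz-potential bound $|f(y) - a_\Delta| \lesssim \int_{5\Delta}|\nablahmt f(z)|\,|y-z|^{1-n}\,d\sigma(z) + \mathcal{T}$ with $\mathcal{T}$ independent of $y$, from which the (in fact strong) $L^1$ estimate follows by Fubini and ADR. That pointwise bound would follow from estimating the full line integral $\int_\gamma |\nabla u|$ along the John path rather than using length-times-sup, combined with the trivial interior kernel bound $|\nabla u(\gamma(s))| \lesssim \int |\nablahmt f(z)|\,|\gamma(s)-z|^{-n}\,d\sigma(z) + \text{tail}$ and the Fubini computation $\int_0^r (s + |y-z|)^{-n}\,ds \lesssim |y-z|^{1-n}$. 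If carried through, this is a plausible and arguably more elementary argument that sidesteps the weak-$(1,1)$ theory for $\Rc_{\sigma,*}$ and actually gives a stronger (strong $L^1$) conclusion for the local term, whereas the paper only obtains weak-$(1,1)$.

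However, the pointwise Riesz-potential estimate is where essentially all the content of the lemma lies, and you explicitly flag it as ``the main obstacle'' without carrying it out. To close the gap you would need to: (a) justify that the non-tangential boundary value satisfies $u^+(y) - u(X_\Delta^+) = \int_\gamma \nabla u\cdot ds$ for $\sigma$-a.e.\ $y$, which requires verifying that $\int_\gamma |\nabla u| < \infty$; (b) perform the Fubini computation carefully, with attention to the non-tangentiality estimate $|\gamma(s) - z| \gtrsim s + |y-z|$ along the local John path and to the radius $A$ and the constant in the local John condition; (c) check the tail cancellation coming from the choice of $h_\Delta$ (subtracting $\partial_i\Ec(\xi-y)$) to obtain the $2^{-j(n+1)}$ decay; and (d) handle the case $n=1$, where $|y-z|^{1-n}$ becomes a logarithm. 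As written this is a plan, not a proof, even though the plan is sound.
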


The proof of Lemma \ref{lemma:hmt1_poincare} is similar to the proof of estimate \eqref{ineq:hmt_poincare} which uses heavily the machinery built in \cite{hofmannmitreataylor}. For the convenience of the reader, we provide the key arguments below. For the proof, we recall the definitions of the Riesz transform $\Rc_\sigma$, the maximal truncation of the Riesz transform $\Rc_{\sigma,*}$ and the double layer potential $\Dc$ of a suitable function $f$ on $\pom$: for $\eps > 0$, $X \in \R^{n+1}$ and $Z \in \Omega$, we set
\begin{align*}
  \Rc_{\sigma,\eps} f(X) &\coloneqq \frac{1}{C_n} \int_{\pom \setminus B(X,\eps)} \frac{X-y}{|X-y|^{n+1}} f(y) \, d\sigma(y), \\
  \Rc_\sigma f(X) &\coloneqq \lim_{\eps \to 0} \Rc_{\sigma,\eps} f(X), \\
  \Rc_{\sigma,*} f(X) &\coloneqq \sup_{\eps > 0} |\Rc_{\sigma,\eps} f(X)|, \\
  \Dc f(Z) &\coloneqq \frac{1}{C_n} \int_{\pom} \frac{\nu(y) \cdot (y - Z)}{|Z - y|^{n+1}} f(y) \, d\sigma(y)
\end{align*}
where $\nu$ is the measure-theoretic outer unit normal of $\Omega$ (see (2.2.11) in \cite{hofmannmitreataylor}) and $C_n$ is the surface area of the unit sphere in $\R^{n+1}$. We extend $\Dc f$ to the whole $\R^{n+1} \setminus \pom$ by changing the direction of the normal $\nu$ for points $X \in \text{int} \, \Omega^{\text{c}}$.

\begin{proof}
  Let $f$ be a compactly supported Lipschitz function on $\pom$, $\xi \in \pom$ and $0 < r < \diam(\pom)$. Let $\Delta \coloneqq \Delta(\xi,r) = B(\xi,r) \cap \pom$. By the theory of layer potentials we know that for $\sigma$-a.e.\ $x \in \pom$, we have
  \begin{align*}
    f(x) = \Dc^+ f(x) - \Dc^-f(x),
  \end{align*}
  where $\Dc^+ f(x)$ and $\Dc^-f(x)$ are the inner and outer non-tangential limits of $\Dc f$ at $x$, respectively (see e.g. \cite[Section 3.3]{hofmannmitreataylor} and apply the results separately for $\Omega$ and $\text{int} \, \Omega^{\text{c}}$). These non-tangential limits exist $\sigma$-almost everywhere. For a constant vector $h_\Delta$ to be fixed below, we consider the function
  \begin{align*}
    u(X) \coloneqq \Dc f(X) - X\cdot h_\Delta,\quad X\in\R^{n+1}\setminus\pom.
  \end{align*}
  Then, by the $\sigma$-a.e. existence of the limits $\Dc^{\pm} f$, the inner and outer non-tangential limits $u^\pm(x)$ of $u$ exist for $\sigma$-a.e.\ $x\in\pom$. Thus, we have
  \begin{align*}
    f(x) = u^+(x) - u^-(x) \quad\mbox{ for $\sigma$-a.e.\ $x\in\pom$.}
  \end{align*}

  To prove the estimate \eqref{ineq:hmt1_poincare}, we choose
  \begin{align*}
    a_\Delta = u(X_{\Delta}^+) - u(X_{\Delta}^-),
  \end{align*}
  where $X_{\Delta}^+$ and $X_{\Delta}^-$ are interior and exterior local John points inside $B(\xi,r)$, respectively. Since $B(X_\Delta^{\pm},cr) \subset B(\xi,r) \setminus \pom$, we know that $\delta(X_\Delta^\pm) \approx r$. Now, for any $\lambda>0$, we have
  \begin{align}
    \nonumber \sigma\big(\big\{x\in\Delta \colon |f(x) - a_\Delta|>\lambda\big\}\big) & \leq 
    \sigma\big(\big\{x\in\Delta \colon |u^+(x) - u(X_{\Delta}^+)|>\lambda/2\big\}\big) \\
    \label{ineq:level_set} &\quad + 
    \sigma\big(\big\{x\in\Delta \colon |u^-(x) - u(X_{\Delta}^-)|>\lambda/2\big\}\big).
  \end{align}
  We only estimate the first term on the right hand side of \eqref{ineq:level_set} since the second one is estimated similarly. For any $x \in \Delta$, let $\gamma_x^+$ be a non-tangential path in $\Omega$ from $x$ to $X_{\Delta}^+$. Such a non-tangential path with a uniform constant exists for every $x \in \Delta$ by the local John condition. Since $\gamma_x^+$ is a non-tangential path with a uniform constant, we know that $\gamma_x^+ \subset B(\xi,Ar)$ for some fixed $A \ge 1$. Then, for any $Y\in \gamma_x^+\cap\Omega$, the mean value theorem gives us
  \begin{align*}
    |u(Y) - u(X_\Delta^+)|\leq \Hc^1(\gamma_x^+)\,\sup_{Z\in \gamma_x^+ \cap \Omega}|\nabla u(Z)| \lesssim r\,N_*(\chi_{B(\xi,Ar)}|\nabla u|)(x),
  \end{align*}
  where $N_*$ is the non-tangential maximal operator for suitable aperture constant $\alpha > 1$ as defined in \eqref{defin:non-tangential_max} (recall that a $D$-non-tangential path from $x \in \pom$ to $X \in \Omega$ travels inside the cone $\Gamma_D(x)$, like we discussed in the beginning of Section \ref{section:john_conditions}). Letting $Y \to x$ then gives us
  \begin{align*}
    |u^+(x) - u(X_\Delta^+)| \lesssim r\,N_*(\chi_{B(\xi,Ar)}|\nabla u|)(x).
  \end{align*}
  Thus,
  \begin{align}
    \label{estimate:level_set2} \sigma\big(\big\{x\in\Delta \colon |u^+(x) - u(X_{\Delta}^+)|>\lambda/2\big\}\big)
    \lesssim \sigma\big(\big\{x\in\Delta \colon r\,N_*(\chi_{B(\xi,Ar)}|\nabla u|)(x)>c\lambda\big\}\big).
  \end{align}
  
  Let us then estimate the right hand side of \eqref{estimate:level_set2}. We notice that for all $X \in \Gamma_\alpha(x) \cap B(\xi,Ar)$ it holds that $\nabla u(X) = \nabla \Dc f(X) - h_\Delta$. By (3.6.29) in \cite{hofmannmitreataylor}, we know that for all $X \in \Omega$ it holds that
  \begin{align*}
    \nabla \Dc f(X) = \bigg( \sum_i \int_{\pom} \partial_i \Ec(X-y)\,\partial_{\smallt,j,i} f(y)\,d\sigma(y) \bigg)_{1\leq j \leq n+1},
  \end{align*}
  where $\Ec$ is the fundamental solution to the Laplacian in $\R^{n+1}$, that is
  \begin{align*}
    \Ec(X) \coloneqq \left\{ \begin{array}{cl}
                               \dfrac{1}{C_n (1-n)} \dfrac{1}{|X|^{n-1}}, &\text{ if } n \ge 2, \\
                               \dfrac{1}{2\pi} \log|X|, &\text{ if } n = 1,
                             \end{array} \right. 
  \end{align*}
  for $X \in \R^{n+1} \setminus \{0\}$, where $C_n$ is the surface area of the unit sphere in $\R^{n+1}$. Thus, choosing
  \begin{align*}
    h_\Delta \coloneqq \bigg( \sum_i \int_{\pom\setminus 2A\Delta} \partial_i\Ec(\xi-y)\,\partial_{\smallt,j,i} f(y)\,d\sigma(y) \bigg)_{1\leq j \leq n+1}
  \end{align*}
  gives us
  \begin{align}
    \nonumber \nabla u(X) &= \bigg(\sum_i \int_{2A\Delta} \partial_i\Ec(X-y)\,\partial_{\smallt,j,i} f(y)\,d\sigma(y)\bigg)_{1\leq j \leq n+1}\\
    \nonumber & \quad + \bigg(\sum_i \int_{\pom\setminus2A\Delta} \big(\partial_i\Ec(X-y)- \partial_i\Ec(\xi-y)\big)\,\partial_{\smallt,j,i} f(y)\,d\sigma(y)\bigg)_{1\leq j \leq n+1}\\
    \nonumber & = \bigg(\sum_i \Rc_{i,\sigma}(\chi_{2A\Delta}\,\partial_{\smallt,j,i} f)(X)\bigg)_{1\leq j \leq n+1} \\
    \nonumber &\quad + \bigg(\sum_i \Rc_{i,\sigma}(\chi_{(2A\Delta)^{\text{c}}}\,\partial_{\smallt,j,i} f)(X)- \Rc_{i,\sigma}( \chi_{(2A\Delta)^{\text{c}}}\,\partial_{\smallt,j,i}f)(\xi)\bigg)_{1\leq j \leq n+1}\\
    \label{eq:i_and_ii} &\eqqcolon I(X) + II(X),
  \end{align}
  where $\Rc_{i,\sigma}$ stands for the $i$-th component of the Riesz transform $\Rc_\sigma$. We then have
  \begin{align*}
    \sigma\big(\big\{x\in\Delta \colon r\,N_*(\chi_{B(\xi,Ar)}|\nabla u|)(x)>c\lambda\big\}\big) &\leq
    \sigma\big(\big\{x\in\Delta \colon r\,N_*(\chi_{B(\xi,Ar)}|I|)(x)>c\lambda/2\big\}\big) \\
    &\quad + \sigma\big(\big\{x\in\Delta \colon r\,N_*(\chi_{B(\xi,Ar)}|II|)(x)>c\lambda/2\big\}\big).
  \end{align*}
  
  We first estimate the term $\sigma(\{x\in\Delta \colon r\,N_*(\chi_{B(\xi,Ar)}|I|)(x)>c\lambda/2\})$. Let $x \in \Delta$ and $Y \in \Gamma_\alpha(x) \cap B(\xi,Ar)$, where $\Gamma_\alpha(x)$ is the cone at $x$ with aperture $\alpha$ (recall \eqref{defin:cone}). Let $\eps = \eps_{x,Y} \coloneqq 2|x-Y|$ and $\Delta_\eps \coloneqq \Delta(x,\eps)$. We then have
  \begin{align}
    |I(Y)| &\le \sum_{i,j} |\Rc_{i,\sigma}(\chi_{2A\Delta}\,\partial_{\smallt,j,i} f)(Y)|\notag \\
    &\le \sum_{i,j} |\Rc_{i,\sigma}(\chi_{2A\Delta\cap \Delta_\eps}\,\partial_{\smallt,j,i} f)(Y)|\notag \\
    &\quad \quad + \sum_{i,j} |\Rc_{i,\sigma}(\chi_{2A\Delta\setminus \Delta_\eps}\,\partial_{\smallt,j,i} f)(Y) - \Rc_{i,\sigma}(\chi_{2A\Delta\setminus \Delta_\eps}\,\partial_{\smallt,j,i} f)(x)|\notag\\
    &\quad \quad
    + \sum_{i,j} |\Rc_{i,\sigma}(\chi_{2A\Delta\setminus \Delta_\eps}\,\partial_{\smallt,j,i} f)(x)|.\label{ineq:spp}
  \end{align}
  Since $\dist(Y,\pom)\approx\varepsilon$, the first term on the right hand side of \eqref{ineq:spp} satisfies
  \begin{align*}
    \sum_{i,j} |\Rc_{i,\sigma}(\chi_{2A\Delta\cap \Delta_\eps}\,\partial_{\smallt,j,i} f)(Y)|
    \lesssim \sum_{i,j} \int_{\Delta_\eps} \frac{|\chi_{2A\Delta}\,\partial_{\smallt,j,i} f|}{\varepsilon^n}\,d\sigma
    &\approx \sum_{i,j} \fint_{\Delta_\eps} |\chi_{2A\Delta}\,\partial_{\smallt,j,i} f| \,d\sigma \\
    &\le M(\chi_{2A\Delta} |\nabla_{\HMT} f|)(x),
  \end{align*}
  where we used Ahlfors--David regularity for the estimate $\eps^n \approx \sigma(\Delta(x,\eps))$ and $M$ stands for the centered maximal Hardy--Littlewood operator on $\pom$. For the second term on the right hand side of \eqref{ineq:spp}, we get
  \begin{align*}
    \sum_{i,j} |&\Rc_{i,\sigma}(\chi_{2A\Delta\setminus \Delta_\eps}\,\partial_{\smallt,j,i} f)(Y) - \Rc_{i,\sigma}(\chi_{2A\Delta\setminus \Delta_\eps}\,\partial_{\smallt,j,i} f)(x)|\\
    &\overset{\text{(A)}}{\lesssim} \int_{2A\Delta\setminus \Delta_\eps} \frac {|Y-x|}{|x-y|^{n+1}}\,|\nablahmt f(y)|\,d\sigma(y) \\
    &\lesssim \int_{\pom \setminus \Delta_\eps} \frac{\eps}{|x-y|^{n+1}} \, |\chi_{2A\Delta}(y) \nablahmt f(y)| \, d\sigma(y) \\
    &\overset{\text{(B)}}{\lesssim} \sum_{k=1}^\infty \int_{2^k \Delta_\eps \setminus 2^{k-1} \Delta_\eps} \frac{\eps}{(2^k \eps)^{n+1}} \, |\chi_{2A\Delta}(y) \nablahmt f(y)| \, d\sigma(y) \\
    &\overset{\text{(C)}}{\lesssim} \sum_{k=1}^{\infty} \frac{2^{-k}}{\sigma(2^k \Delta_\eps)} \int_{2^k \Delta_\eps \setminus 2^{k-1} \Delta_\eps} |\chi_{2A\Delta}\nablahmt f| \, d\sigma \\
    &\lesssim M(\chi_{2A\Delta}\nabla_{\HMT} f)(x),
  \end{align*}
  where we used
  \begin{enumerate}
    \item[(A)] the mean value theorem for the Riesz kernel functions $X = (X_1, X_2, \ldots, X_{n+1}) \mapsto \tfrac{X_i}{|X|^{n+1}}$ and the estimate $|\nabla \tfrac{X_i}{|X|^{n+1}}| \lesssim \tfrac{1}{|X|^{n+1}}$,
    
    \item[(B)] the fact that $x$ is the center point of $\Delta_\eps$ and hence, $|x-y| \approx 2^k \eps$ for $y \in 2^k \Delta_\eps \setminus 2^{k-1} \Delta_\eps$,
    
    \item[(C)] Ahlfors--David regularity.
  \end{enumerate}
  In addition, the last term on the right hand side of \eqref{ineq:spp} is bounded above by the sum $\sum_{i,j} \Rc_{i,\sigma,*}(\chi_{2A\Delta}\,\partial_{\smallt,j,i} f)(x)$, where $\Rc_{i,\sigma,*}$ stands for the maximal truncation of the Riesz transform defined using only the $i$-th component of $\Rc_\sigma$. Thus,
  \begin{align*}
    |I(Y)| \lesssim M(\chi_{2A\Delta}\nabla_{\HMT} f)(x) + \sum_{i,j} \Rc_{i,\sigma,*}(\chi_{2A\Delta}\,\partial_{\smallt,j,i} f)(x),
  \end{align*}
  and so
  \begin{align*}
    \sigma\big(\big\{x\in\Delta \colon r\,N_*(\chi_{B(\xi,2r)}|I|)(x)>c\lambda/2\big\}\big)
    & \leq \sigma\big(\big\{x\in\Delta \colon M(\chi_{2A\Delta}\nabla_{\HMT} f)(x)>c'\lambda/r\big\}\big)\\
    & \!\! +\!
    \sum_{i,j} \sigma\big(\big\{x\in\Delta \colon \Rc_{i,\sigma,*}(\chi_{2A\Delta}\,\partial_{\smallt,j,i} f)(x)> c'\lambda/r\big\}\big).
  \end{align*}
  Since $\pom$ is uniformly rectifiable, $\Rc_\sigma$ is bounded from $L^2(\sigma)$ to $L^2(\sigma)$ \cite{davidsemmes_singular}, and therefore $\Rc_{i,\sigma,*}$ is of weak type $(1,1)$ with respect to $\sigma$ by classical Calder\'on--Zygmund type techniques (see e.g. \cite[Section 5]{grafakos}). This and the weak type $(1,1)$ of the Hardy--Littlewood maximal operator combined with the previous estimates then give us
  \begin{align}
    \label{eq:aki7} \sigma\big(\big\{x\in\Delta:\,r\,N_*(\chi_{B(\xi,Ar)}|I|)(x)>c\lambda/2\big\}\big)\lesssim \frac r\lambda \int_{2A\Delta} |\nablahmt f|\,d\sigma.
  \end{align}

  Let us then consider the term $II(X)$ in \eqref{eq:i_and_ii}. For $x \in \Delta$ and $X \in \Gamma_\alpha(x) \cap B(\xi,Ar)$, the same arguments as with the middle sum in \eqref{ineq:spp} give us
  \begin{align*}
    |II(X)| \lesssim \int_{\pom\setminus2A\Delta} \frac r{|\xi-y|^{n+1}}\,|\nablahmt f(y)|\,d\sigma(y)\lesssim
    \sum_{j=2}^{\infty} \frac{2^{-j}}{\sigma(2^j\Delta)} \int_{2^j \Delta \setminus 2^{j-1} \Delta} |\nablahmt f| \, d\sigma.
  \end{align*}
  Therefore,
  \begin{align}
    \label{eq:aki8} \sigma\big(\big\{x\in\Delta \colon r\,N_*(\chi_{B(\xi,2r)}|II|)(x)>c\lambda/2\big\}\big)
    \lesssim  \frac r\lambda\,\sigma(\Delta) \sum_{j=2}^{\infty} 2^{-j(n+1)} \int_{2^j \Delta \setminus 2^{j-1} \Delta} |\nablahmt f| \, d\sigma.
  \end{align}
  Combining \eqref{eq:aki7} and \eqref{eq:aki8} with \eqref{estimate:level_set2} and other previous estimates gives us
  \begin{align*}
    \sigma\big(\big\{x\in\Delta:\,|u^+(x) - u(X_{\Delta}^+)|>\lambda/2\big\}\big)
    & \lesssim \frac r\lambda  \int_{2A\Delta} |\nablahmt f| \, d\sigma \\
    &\quad+ \frac r\lambda \sum_{j=2}^{\infty} 2^{-j(n+1)} \int_{2^j \Delta \setminus 2^{j-1} \Delta} |\nablahmt f| \, d\sigma.
  \end{align*}
  By a straightforward covering argument, the right hand side of the preceding inequality is comparable to the right hand side of \eqref{ineq:hmt1_poincare}, with the comparability constant depending on $A$. By similar arguments, one can check that the same estimate holds replacing $u^+$ by $u^-$ and $X_{\Delta}^+$ by $X_{\Delta}^-$. The claim follows then by applying the inequalities for $u^+$ and $u^-$ to \eqref{ineq:level_set}.
\end{proof}

For the proof of the case $\diam(\pom) < \infty$ in Theorem \ref{theorem:boundary_poincare}, we give a short proof of the fact that Theorem \ref{theorem:hmt_poincare} implies quasiconvexity for bounded $2$-sided chord-arc domains:

\begin{lemma}
  \label{lemma:boundary_quasiconvex}
  Suppose that $\Omega$ is a $2$-sided chord-arc domain such that $\diam(\pom) < \infty$. Then the boundary $\pom$ is quasiconvex.
\end{lemma}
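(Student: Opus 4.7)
The plan is to leverage the fact that when $\diam(\pom) < \infty$, Theorem \ref{theorem:hmt_poincare} admits a clean ``global'' form: for any $\xi \in \pom$ and $r = \diam(\pom)$, the surface ball $\Delta = \Delta(\xi, r)$ contains all of $\pom$, so the tail series in \eqref{ineq:hmt_poincare} is empty. Combined with Lemmas \ref{lemma:properties_of_hmt_objects} and \ref{lemma:norm_of_tangential_gradient}, this will yield a Lipschitz-function Poincar\'e inequality
\begin{align*}
    \fint_{\pom} |f - \langle f \rangle_{\pom}|^p \, d\sigma \le C \diam(\pom)^p \fint_{\pom} |\nablat f|^p \, d\sigma
\end{align*}
for every Lipschitz $f \colon \pom \to \R$ and every $1 < p < \infty$.

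As a warm-up I would use this to show that $\pom$ is connected. If $\pom = A \sqcup B$ were a nontrivial partition into disjoint closed subsets, then since $\pom$ is compact $\dist(A, B) > 0$, so $f = \chi_A$ would be Lipschitz on $\pom$ with pointwise local Lipschitz constant (hence $|\nablat f|$) equal to $0$ everywhere. The inequality above would then force $f$ to be $\sigma$-a.e.\ constant, a contradiction.

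For the quasiconvexity claim I would argue by contradiction. Given $x, y \in \pom$, let $d_{\pom}(x,y) \in [0,\infty]$ denote the infimum of lengths of rectifiable paths in $\pom$ from $x$ to $y$, and suppose $d_{\pom}(x,y) > M|x-y|$ for $M$ a very large constant to be chosen. At the scale $r \approx |x-y|$, I would consider the test function
\begin{align*}
    f(z) := \min\bigl(d_{\pom}(x, z),\,Mr\bigr),
\end{align*}
which is $1$-Lipschitz in the intrinsic path metric on $\pom$ and satisfies $f(x) = 0$ and $f(y) = Mr$. The $2$-sided chord-arc structure together with uniform rectifiability and the Bilateral Weak Geometric Lemma (Lemma \ref{lemma:bwgl}) ensures that at $\sigma$-a.e.\ point $z \in \pom$, $\pom$ is well-approximated by a Lipschitz graph, whence $\pom$ is locally quasiconvex at $z$ with a uniform constant. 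This forces $|\nablat f(z)| \le C_1$ $\sigma$-a.e.\ for an absolute $C_1$. Plugging $f$ into the Poincar\'e inequality (the global form above when $|x-y| \approx \diam(\pom)$, or the full Theorem \ref{theorem:hmt_poincare} at scale $r$ otherwise, controlling the tail via the $L^\infty$-bound $\|f\|_\infty \le Mr$) will yield an oscillation estimate incompatible with $f(y) - f(x) = Mr$ once $M$ exceeds a certain absolute constant.

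The main obstacle is the pointwise bound $|\nablat f(z)| \le C_1$ at $\sigma$-a.e.\ point: although uniform rectifiability gives almost-everywhere Lipschitz-graph structure, one still needs to verify that the regularity of $\pom$ at the relevant scale is enough to make $f$ genuinely Lipschitz in the Euclidean metric, not merely in the intrinsic one. Handling this rigorously in the presence of the Carleson-sparse ``bad'' set from the BWGL is the delicate point, and is the step where the full strength of the $2$-sided chord-arc assumption (as opposed to mere uniform rectifiability) really enters.
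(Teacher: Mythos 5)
Your proposal diverges substantially from the paper's argument, and it contains a genuine gap at precisely the step you flag as ``the delicate point.'' The crucial claim is that $|\nablat f(z)| \le C_1$ for $\sigma$-a.e.\ $z$, where $f(z) = \min(d_{\pom}(x,z), Mr)$. By Lemma \ref{lemma:norm_of_tangential_gradient}, $|\nablat f(z)|$ is the local Euclidean Lipschitz constant of $f$, which is controlled by $\limsup_{w\to z} d_{\pom}(z,w)/|z-w|$ — that is, by a \emph{local quasiconvexity constant} at $z$. Establishing this uniformly for a.e.\ $z$ is essentially a pointwise version of the statement you are trying to prove. Your appeal to the BWGL does not close this circle: Carleson sparseness of the bad cubes $\{Q : b\beta_{\pom}(2B_Q) > \eps\}$ gives, for a.e.\ $z$, that bad scales become rare in a density sense, not that \emph{all} sufficiently small scales at $z$ are good. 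And even at a scale where $b\beta_{\pom}$ is small, the set $\pom \cap B(z,r)$ can be disconnected while lying close to a hyperplane, so flatness of the $\beta$-number does not by itself yield a short path in $\pom$ between nearby points. In fact, if one could establish that $f$ is globally $C_1$-Lipschitz in the Euclidean metric (which is what an a.e.\ gradient bound together with path-integration would give, but path-integration again presupposes the quasiconvexity one wants), then $f(y)-f(x)\le C_1|x-y|=C_1 r$ would already contradict $f(y)-f(x)=Mr$ directly and the Poincar\'e inequality would be superfluous — a symptom of the circularity.

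The paper sidesteps this entirely by invoking the Durand-Cartagena--Jaramillo--Shanmugalingam criterion (Theorem \ref{theorem:poincare_quasiconvex}): one only has to verify, for an \emph{arbitrary} bounded locally $1$-Lipschitz $f$, that some functional $a_f(\Delta)$ dominating the $L^1$-oscillation satisfies $a_f(\Delta) \le Cr$. For such $f$, the bound $|\nablahmt f| \le 1$ a.e.\ is immediate from Lemmas \ref{lemma:properties_of_hmt_objects} and \ref{lemma:norm_of_tangential_gradient} — no test-function construction or local connectivity argument is needed — and the HMT Poincar\'e inequality (Theorem \ref{theorem:hmt_poincare} with $p=2$) then supplies the functional. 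The ``price'' of circumventing the delicate point is paid once, inside the cited black box. If you want to make your direct argument rigorous you would essentially have to reprove that theorem; and a secondary loose end in your sketch is that the $L^p$ oscillation bound does not by itself control the pointwise difference $|f(y)-f(x)|$ without a telescoping/Lebesgue-point argument (compare Lemma \ref{lemma:pointwise_bound}), though this part is fixable.
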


Notice that Corollary \ref{corollary:poincare} implies stronger connectivity properties than the conclusion of Lemma \ref{lemma:boundary_quasiconvex} (recall Corollary \ref{corollary:annular_quasiconvexity}) but we need Lemma \ref{lemma:boundary_quasiconvex} to prove Corollary \ref{corollary:poincare}. Lemma \ref{lemma:boundary_quasiconvex} follows almost directly from the results reviewed in this section when we combine them with the following result of Durand-Cartagena, Jaramillo and Shanmugalingam:

\begin{theorem}[{\cite[Theorem 3.6]{durandcartagenaetal}}]
  \label{theorem:poincare_quasiconvex}
  Let $(X,d,\mu)$ be a complete metric measure space with a doubling measure $\mu$. Suppose that for every bounded Lipschitz function $f$ that is locally $1$-Lipschitz there exists a functional $a_f \colon \Bc \to [0,\infty)$ such that
  \begin{align*}
    \fint_B |f - \langle f \rangle_B| \, d\mu \le a_f(B) \le C r_B,
  \end{align*}
  where $\Bc$ is the collection of all open balls in $(X,d)$, $C$ is a uniform constant and $r_B$ is the radius of the ball $B$. Then the space $(X,d)$ is quasiconvex.
\end{theorem}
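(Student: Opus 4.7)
The plan is to deduce quasiconvexity by producing, for every pair $x,y\in X$, $\delta$-chains from $x$ to $y$ of total length $\lesssim d(x,y)$ uniformly in $\delta>0$, and then extracting an actual rectifiable curve in the limit $\delta\to 0$. Fix $x\in X$ and $\delta>0$, and define the $\delta$-chain potential
\[
u_\delta(z)\ :=\ \inf\Bigl\{\textstyle\sum_{i=0}^{N-1} d(z_i,z_{i+1})\ :\ x=z_0,z_1,\ldots,z_N=z,\ d(z_i,z_{i+1})<\delta\Bigr\},
\]
with $u_\delta(z)=+\infty$ if no such chain exists. Extending a chain by one $\delta$-step shows that $E:=\{u_\delta<\infty\}$ is clopen with $\mathrm{dist}(E,X\setminus E)\geq\delta$, and that $u_\delta$ is $1$-Lipschitz on every $\delta$-ball inside $E$. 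Consequently, for each $M>0$ the truncation $u_\delta^M:=\min(u_\delta,M)$ is bounded by $M$, locally $1$-Lipschitz, and globally Lipschitz (with constant $\max(1,2M/\delta)$), so the hypothesis applies to it. I would first rule out $X\setminus E\neq\emptyset$: the bounded locally $1$-Lipschitz function $M\chi_{X\setminus E}$ (locally constant on each clopen piece and hence locally $1$-Lipschitz) would, on a ball $B$ straddling the two pieces (produced via doubling so that both $E\cap B$ and $(X\setminus E)\cap B$ carry a definite fraction of $\mu(B)$), have mean oscillation $\gtrsim M$, contradicting the hypothesis bound $\leq Cr_B$ once $M$ is taken large enough. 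Hence $u_\delta$ is finite everywhere.

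Next, fix $y\in X$ and set $R:=3d(x,y)$, $M:=2u_\delta(y)$, $f:=u_\delta^M$, so that $f(x)=0$ and $f(y)=u_\delta(y)$. Applying the hypothesis to $f$ on the concentric balls $B_k:=B(x,2^{-k}R)$, telescoping, and using doubling,
\[
\bigl|\langle f\rangle_{B_0}-f(x)\bigr|\ \leq\ \sum_{k\geq 0}\bigl|\langle f\rangle_{B_k}-\langle f\rangle_{B_{k+1}}\bigr|\ \lesssim\ \sum_{k\geq 0}C\cdot 2^{-k}R\ \lesssim\ d(x,y),
\]
where continuity of $f$ ensures $\langle f\rangle_{B_k}\to f(x)$. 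The analogous telescoping at $y$, together with the estimate $|\langle f\rangle_{B_0}-\langle f\rangle_{B(y,R)}|\lesssim d(x,y)$ (both balls are doubling-comparable and sit inside $B(x,2R)$, on which the hypothesis caps the mean oscillation of $f$), yields $u_\delta(y)=|f(y)-f(x)|\leq C'd(x,y)$ with $C'$ depending only on $C$ and the doubling constant, and \emph{independent of $\delta$}.

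To finish, letting $\delta\to 0$ gives $d_\ell(x,y)\leq C'd(x,y)$, where $d_\ell$ is the length pseudo-metric. Since $\mu$ is doubling and $X$ is complete, $X$ is proper, and a standard Arzel\`a--Ascoli argument applied to constant-speed reparameterizations of near-minimizing $\delta_n$-chains (confined to the totally bounded set $\overline{B(x,(C'+1)d(x,y))}$) extracts a uniformly convergent subsequence whose limit is a rectifiable curve from $x$ to $y$ of length $\leq(C'+1)d(x,y)$; this is the desired quasiconvexity. I expect the main obstacle to be the first stage (forcing $E=X$), because it requires exploiting doubling to manufacture a single ball in which both $E$ and its complement carry a substantial fraction of the measure, and then leveraging the freedom to send the truncation height $M\to\infty$ against the mean-oscillation bound; by contrast, the telescoping in the second stage is routine once the correct class of test functions $u_\delta^M$ is in hand.
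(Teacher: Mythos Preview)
The paper does not contain a proof of this theorem: it is quoted verbatim from \cite[Theorem~3.6]{durandcartagenaetal} and used as a black box in the proof of Lemma~\ref{lemma:boundary_quasiconvex}. There is therefore nothing in the present paper to compare your argument against.

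That said, your outline is essentially the standard proof (and, modulo presentation, the one in \cite{durandcartagenaetal}): one tests the mean-oscillation hypothesis against the truncated $\delta$-chain potentials $u_\delta^M$, telescopes along a chain of shrinking balls to obtain $u_\delta(y)\le C'\,d(x,y)$ uniformly in $\delta$, and then passes to the limit using properness of $X$ (complete $+$ doubling). Two small points are worth tightening. First, in the clopen-splitting step you should record explicitly that $M\chi_{X\setminus E}$ is globally Lipschitz (with constant $M/\delta$, since $\dist(E,X\setminus E)\ge\delta$), because the hypothesis is stated for functions that are both Lipschitz and locally $1$-Lipschitz; the contradiction then comes from fixing a single ball $B$ meeting both $E$ and $X\setminus E$ with definite measure fractions (via doubling, with constants depending on $\delta$ and the chosen points, but independent of $M$) and letting $M\to\infty$. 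Second, in the final compactness step a $\delta$-chain is a discrete sequence, not a curve, so ``constant-speed reparametrization'' is a slight abuse: the clean version parametrizes each $\delta_n$-chain as a piecewise-constant map $\gamma_n\colon[0,L]\to\overline{B(x,L)}$ by cumulative step length, notes that $d(\gamma_n(s),\gamma_n(t))\le|s-t|+2\delta_n$, and extracts (by a diagonal argument on a countable dense set of times) a pointwise limit that is $1$-Lipschitz and joins $x$ to $y$. With these clarifications your argument is complete.
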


\begin{proof}[Proof of Lemma \ref{lemma:boundary_quasiconvex}]
  Let $f$ be a bounded Lipschitz function on $\pom$ such that $f$ is locally $1$-Lipschitz, and let $\Bc = \{\Delta(x,r) \colon x \in \pom, r < \diam(\pom)\}$. Since $\diam(\pom) < \infty$, we know that $\nablahmt f$ exists (recall Remark \ref{remark:compact_support}). Let us define the functional $a_f \colon \Bc \to \R$ by setting
  \begin{align*}
    a_f(\Delta) = C_2 r \left( \fint_{\Delta} |\nablahmt f|^2 \, d\sigma \right)^{1/2}
    + C_2 r \sum_{j=2}^{\infty} \frac{2^{-j}}{\sigma(2^j \Delta)} \int_{2^j \Delta \setminus 2^{j-1} \Delta} |\nablahmt f| \, d\sigma
  \end{align*}
  for every $\Delta = \Delta(x,r) \in \Bc$, where $C_2$ is the constant given by Theorem \ref{theorem:hmt_poincare} for the arbitrary choice $p = 2$. By part (1) of Lemma \ref{lemma:properties_of_hmt_objects}, the functional $a_f$ is well-defined, and the fact that $a_f(\Delta) < \infty$ for each $\Delta \in \Bc$ follows from the argument below. Now, by Lemma \ref{lemma:properties_of_hmt_objects}, Lemma \ref{lemma:norm_of_tangential_gradient}, and the fact that $f$ is locally $1$-Lipschitz, we know that $|\nablahmt f| \le 1$ almost everywhere. In particular, for $\Delta = \Delta(x,r) \in \Bc$ we have
  \begin{align}
    \label{estimate:functional} a_f(\Delta)
    \le C_2 r + C_2 r \sum_{j=2}^{\infty} \frac{2^{-j}}{\sigma(2^j \Delta)} \sigma(2^j \Delta \setminus 2^{j-1} \Delta)
    \le C_2 r + \sum_{j=2}^{\infty} 2^{-j}
    \le 2 C_2 r.
  \end{align}
  Thus, we now only need to notice that by H\"older's inequality and the estimate \eqref{estimate:functional}, we have
  \begin{align*}
    \fint_\Delta |f - \langle f \rangle_\Delta| \, d\sigma
    \le \left( \fint_\Delta |f - \langle f \rangle_\Delta|^2 \, d\sigma \right)^{1/2}
    \le a_f(\Delta)
    \le 2 C_2 r
  \end{align*}
  for any $\Delta = \Delta(x,r) \in \Bc$, and the claim follows from Theorem \ref{theorem:poincare_quasiconvex}.
\end{proof}

\section{Weak $1$-Poincar\'e inequality for boundaries of $2$-sided chord-arc domains}
\label{section:poincare}

Let $\Omega$ be a $2$-sided chord-arc domain. In this section, we prove Theorem \ref{theorem:boundary_poincare} and Corollary \ref{corollary:poincare} with the help of some tools from the literature. As a simple consequence of Theorem \ref{theorem:boundary_poincare} and some results in the literature, we also show that the tail in the Hofmann--Mitrea--Taylor weak Poincar\'e inequality (Theorem \ref{theorem:hmt_poincare}) can be removed, at least when $\Omega$ is a bounded $2$-sided chord-arc domain (see Corollary \ref{corollary:poincare_for_hmt_gradient}).

Instead of proving a Poincar\'e type inequality directly, we use the following result to reduce the proof to a pointwise estimate:

\begin{theorem}[{\cite[part of Theorem 8.1.7]{heinonenetal}}]
  \label{theorem:poincare_pointwise}
  Let $(X,d,\mu)$ be a metric measure space with a doubling measure $\mu$ and $V$ be a Banach space. Suppose that $1 \le p < \infty$, $u \colon X \to V$ is integrable on balls and $g \colon X \to [0,\infty]$ is measurable. Then the following two conditions equivalent:
  \begin{enumerate}
    \item[(a)] there exist constants $C, \lambda \ge 1$ such that
    \begin{align*}
      \fint_B \left| u(x) - \langle u \rangle_B \right| \, d\mu(x) \le C \diam(B) \left( \fint_{\lambda B} g(x)^p \, d\mu(x) \right)^{1/p}
    \end{align*}
    for every open ball $B$ in $X$,
    
    \item[(b)] there exist constants $C, \lambda \ge 1$ such that
    \begin{align*}
      |u(x) - u(y)| \le C d(x,y) \left( M_{\lambda d(x,y)}(g^p)(x) + M_{\lambda d(x,y)} (g^p)(y) \right)^{1/p}
    \end{align*}
    for almost all $x,y \in X$, where $M_R$ is the $R$-truncated centered Hardy--Littlewood maximal operator on $\pom$, $M_R f(z_0) \coloneqq \sup_{r < R} \fint_{\Delta(z_0,r)} |f(z)| \, d\sigma(z)$.
  \end{enumerate}
\end{theorem}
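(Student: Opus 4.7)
The statement is a classical equivalence between an integral $(1,p)$-Poincar\'e inequality and a pointwise "Haj\l asz-type" bound via a truncated maximal function, and I would prove the two implications separately: (a)$\Rightarrow$(b) by a shrinking-chain argument in the spirit of Haj\l asz--Koskela, and (b)$\Rightarrow$(a) by a direct integration followed by a maximal function estimate.

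\textbf{Direction (a)$\Rightarrow$(b).} Fix two Lebesgue points $x,y\in X$ of $u$ (the non-Lebesgue points form a $\mu$-null set because $\mu$ is doubling), and set $d\coloneqq d(x,y)$. I would consider the telescoping chains $B_k^x\coloneqq B(x,2^{-k}d)$ and $B_k^y\coloneqq B(y,2^{-k}d)$, $k\ge 0$. By the Lebesgue differentiation theorem $u(x)=\lim_{k\to\infty}\langle u\rangle_{B_k^x}$, so
\begin{align*}
|u(x)-\langle u\rangle_{B_0^x}|
\le \sum_{k\ge 0}|\langle u\rangle_{B_{k+1}^x}-\langle u\rangle_{B_k^x}|
\le C(\mu)\sum_{k\ge 0}\fint_{B_k^x}|u-\langle u\rangle_{B_k^x}|\,d\mu,
\end{align*}
where the second step uses that $B_{k+1}^x\subset B_k^x$ have comparable $\mu$-measure by doubling. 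Applying (a) to each $B_k^x$ and noting that $\lambda B_k^x\subset B(x,\lambda d)$, every average $\fint_{\lambda B_k^x}g^p\,d\mu$ is dominated by $M_{\lambda d}(g^p)(x)$; summing the resulting geometric series yields $|u(x)-\langle u\rangle_{B_0^x}|\le Cd\,(M_{\lambda d}(g^p)(x))^{1/p}$, and symmetrically for $y$. The two starting averages are bridged via the common ball $\widetilde B\coloneqq B(x,2d)\supset B_0^x\cup B_0^y$: doubling and one more application of (a) give $|\langle u\rangle_{B_0^x}-\langle u\rangle_{B_0^y}|\le Cd\,(M_{2\lambda d}(g^p)(x))^{1/p}$. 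The triangle inequality then produces (b) with constant $2\lambda$.

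\textbf{Direction (b)$\Rightarrow$(a).} Fix $B=B(z,r)$. Since $d(x,y)\le 2r$ for $x,y\in B$, (b) and Jensen's inequality for the concave map $t\mapsto t^{1/p}$ give, after integrating over $B\times B$ and exploiting symmetry in $(x,y)$,
\begin{align*}
\fint_B|u-\langle u\rangle_B|\,d\mu
\le Cr\left(\fint_B M_{2\lambda r}(g^p)(x)\,d\mu(x)\right)^{1/p}.
\end{align*}
For $x\in B$ and $r'<2\lambda r$ the ball $B(x,r')$ lies inside $\lambda' B\coloneqq(1+2\lambda)B$, so $M_{2\lambda r}(g^p)(x)\le M(g^p\chi_{\lambda' B})(x)$. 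For $p>1$ the required bound $\fint_B (M(g^p\chi_{\lambda'B}))^{1/p}\,d\mu\lesssim(\fint_{\lambda'B}g^p\,d\mu)^{1/p}$ follows from Kolmogorov's inequality applied to the weak-$(1,1)$ operator $M$ at exponent $q=1/p\in(0,1)$: it gives $\int_B M(g^p\chi_{\lambda'B})^{1/p}d\mu\le C\mu(B)^{1-1/p}\|g^p\chi_{\lambda'B}\|_{L^1}^{1/p}$, and dividing by $\mu(B)$ and using the doubling estimate $\mu(B)\approx\mu(\lambda' B)$ closes the argument.

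\textbf{Main obstacle: the endpoint $p=1$.} The Kolmogorov step above fails precisely at $q=1$, so the case $p=1$ needs a separate argument. Here I would not integrate (b) naively, but instead use (b) together with the weak-$(1,1)$ of $M$ in distributional form: for $x\in B$, from $|u(x)-u(y)|\le 2Cr(M_{2\lambda r}(g)(x)+M_{2\lambda r}(g)(y))$ one derives
\begin{align*}
\mu\bigl(\{x\in B:|u(x)-\langle u\rangle_B|>\tau\}\bigr)
\lesssim \frac{r}{\tau}\int_{\lambda'B}g\,d\mu
\end{align*}
by choosing $\langle u\rangle_B$ as a suitable median-type constant and using the weak type $(1,1)$ bound for $M$ applied to $g\chi_{\lambda'B}$ (this is the same philosophy as the proof of Lemma~\ref{lemma:hmt1_poincare} earlier in the paper). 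A standard layer-cake/truncation upgrade then converts this weak-type oscillation bound into the strong $L^1$ average bound in (a). The genuine technical difficulty of the proof is precisely this endpoint and the care needed so that the constants $C$ and $\lambda$ in (a) and (b) track each other correctly through the chaining and the maximal function estimate.
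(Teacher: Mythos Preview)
The paper does not prove this theorem: it is quoted verbatim from \cite[Theorem 8.1.7]{heinonenetal} as an auxiliary tool and no argument for it is given anywhere in the present paper. Hence there is no ``paper's own proof'' to compare your proposal against.

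For what it is worth, your sketch follows the standard route (which is essentially the one in the cited reference). The direction (a)$\Rightarrow$(b) via telescoping over shrinking balls and the Lebesgue differentiation theorem is correct as written. In the direction (b)$\Rightarrow$(a) for $p>1$ there is a small inconsistency: your displayed formula, obtained ``by Jensen'', has the exponent $1/p$ \emph{outside} the integral $\fint_B M_{2\lambda r}(g^p)\,d\mu$, and bounding that quantity would require $L^1$-boundedness of $M$, which of course fails. The Kolmogorov step you then invoke actually controls $\fint_B \bigl(M(g^p\chi_{\lambda'B})\bigr)^{1/p}\,d\mu$, i.e.\ with the power \emph{inside}. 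So you should drop the Jensen step and pass directly from the pointwise bound to $\fint_B (M_{2\lambda r}(g^p))^{1/p}\,d\mu$ using subadditivity of $t\mapsto t^{1/p}$; then Kolmogorov closes the estimate exactly as you describe. Your endpoint $p=1$ strategy (weak-type oscillation estimate plus a truncation upgrade to the strong $L^1$ bound) is the correct one and is indeed the nontrivial part of the argument.
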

These types of characterizations with respect to pointwise inequalities originate from \cite{heinonenkoskela}.

Thus, to prove Theorem \ref{theorem:boundary_poincare}, it is enough for us to prove the following lemma (recall also Remark \ref{remark:compact_support}):

\begin{lemma}
  \label{lemma:pointwise_bound}
  Suppose that $u$ is a compactly supported Lipschitz function on $\pom$. There exists a universal constant $C \ge 1$ such that
  \begin{align*}
    |u(x) - u(y)| \le C |x-y| \left( M_{C|x-y|}(|\nablat u|)(x) + M_{C|x-y|}(|\nablat u|)(y) \right)
  \end{align*}
  for all points $x,y \in \pom$ for which the tangential gradient $\nablat u$ exists and \eqref{bound:norm_of_tangential_gradient} holds.
\end{lemma}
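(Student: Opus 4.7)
The plan is to deduce the pointwise estimate from the weak-type $(1,1)$ Hofmann--Mitrea--Taylor Poincar\'e inequality (Lemma~\ref{lemma:hmt1_poincare}) via a dyadic telescoping argument on shrinking surface balls at $x$ and at $y$. Set $r \coloneqq |x-y|$ and $g \coloneqq |\nablahmt u|$, which agrees with $|\nablat u|$ $\sigma$-almost everywhere by Lemma~\ref{lemma:properties_of_hmt_objects}. For $k \ge 0$ consider the nested balls $\Delta_k^x \coloneqq \Delta(x, 2^{1-k} r)$ and $\Delta_k^y \coloneqq \Delta(y, 2^{1-k} r)$, and the common parent $\Delta_\ast \coloneqq \Delta(x, 4r)$ containing both $\Delta_0^x$ and $\Delta_0^y$. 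Lemma~\ref{lemma:hmt1_poincare} yields a constant $a_\Delta$ for each such ball together with a weak-$L^1$ bound on $u - a_\Delta$ there, whose associated norm $W_\Delta/\sigma(\Delta)$ splits into a local piece comparable to $r_\Delta \fint_{5\Delta} g\, d\sigma$ and a tail $r_\Delta \sum_{j \ge 2} 2^{-j} \fint_{2^j \Delta} g\, d\sigma$.

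Chebyshev's inequality applied to these weak-$L^1$ bounds produces, inside every $\Delta_k^x$, a subset of measure at least $\sigma(\Delta_k^x)/2$ on which $|u - a_{\Delta_k^x}| \lesssim W_{\Delta_k^x}/\sigma(\Delta_k^x)$. Continuity of $u$ combined with Lebesgue differentiation of $g$ (which gives $W_{\Delta_k^x}/\sigma(\Delta_k^x) \to 0$ at $\sigma$-a.e.\ $x$) then forces $a_{\Delta_k^x} \to u(x)$. Intersecting the good sets for $a_{\Delta_k^x}$ and $a_{\Delta_{k+1}^x}$ inside $\Delta_{k+1}^x$, which by ADR has measure comparable to $\Delta_k^x$, produces the standard chain estimate
\begin{align*}
  |a_{\Delta_{k+1}^x} - a_{\Delta_k^x}| \lesssim \frac{W_{\Delta_k^x}}{\sigma(\Delta_k^x)} + \frac{W_{\Delta_{k+1}^x}}{\sigma(\Delta_{k+1}^x)},
\end{align*}
and the analogous intersection argument on $\Delta_\ast$ yields $|a_{\Delta_0^x} - a_{\Delta_0^y}| \lesssim W_{\Delta_\ast}/\sigma(\Delta_\ast)$. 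Summing the telescope on the two sides and adding the comparison through $\Delta_\ast$ dominates $|u(x) - u(y)|$ by a sum of $W$-terms.

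It remains to bound this sum by $C r\bigl(M_{Cr}(g)(x) + M_{Cr}(g)(y)\bigr)$. The local averages $r_k \fint_{5\Delta_k^x} g$ telescope directly to $\lesssim r M_{Cr}(g)(x)$ once $C$ is taken large enough. The main obstacle is the double sum from the tails, $\sum_k r_k \sum_{j \ge 2} 2^{-j} \fint_{2^j \Delta_k^x} g\, d\sigma$: interchanging the order of summation and regrouping by the macroscopic radius $\rho = 2^{j+1-k} r$ makes the inner $k$-sum contract geometrically through $\sum_k 2^{-2k} < \infty$, reducing the tail to $\lesssim r \sum_{s \in \Z} c_s \fint_{\Delta(x, 2^s r)} g\, d\sigma$ for coefficients $c_s$ decaying as $|s| \to \infty$. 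Averages at scales $2^s r \le Cr$ are absorbed into $r M_{Cr}(g)(x)$; the far tail $s > \log_2 C$ is controlled using the compact support of $u$, via the decay $\fint_{\Delta(x, 2^s r)} g \lesssim \|g\|_1 (2^s r)^{-n}$ together with the matching lower bound $M_{Cr}(g)(x) \gtrsim (Cr)^{-n} \|g\|_1$ that holds whenever $\Delta(x, Cr)$ already contains $\operatorname{supp} g$, while the complementary configuration (one of $x, y$ far from $\operatorname{supp} u$) is either trivial because $u(x) = u(y) = 0$ or is handled by the symmetric estimate at the other point. The principal technical difficulty is precisely this far-tail step, and it dictates the size of the universal constant $C$ appearing in the conclusion.
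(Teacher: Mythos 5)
Your strategy of applying Lemma \ref{lemma:hmt1_poincare} directly to $u$, telescoping the constants $a_{\Delta_k^x}$, $a_{\Delta_k^y}$, and then regrouping the resulting double sum by macroscopic scale is fine up to the point where the far tail must be compared to the \emph{truncated} maximal function $M_{C|x-y|}$. There your argument has a genuine gap. After the regrouping you are left with $r\sum_{s}c_s\fint_{\Delta(x,2^sr)}g\,d\sigma$ with geometrically decaying $c_s$, and you split into $2^sr\le Cr$ (fine) and $2^sr>Cr$. For the second range you invoke the decay $\fint_{\Delta(x,2^sr)}g\lesssim \|g\|_1(2^sr)^{-n}$ and the lower bound $M_{Cr}(g)(x)\gtrsim (Cr)^{-n}\|g\|_1$, but the latter holds only when $\Delta(x,Cr)\supset\operatorname{supp} g$. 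You then describe the complementary configuration as ``one of $x,y$ far from $\operatorname{supp} u$,'' which is simply not what the complement is: whenever $|x-y|$ is small compared to $\diam(\operatorname{supp} u)$ and $x,y$ sit in the middle of the support, $\Delta(x,Cr)$ does \emph{not} contain $\operatorname{supp} g$, and yet $u(x),u(y)$ need not vanish and the symmetric estimate at $y$ fails for exactly the same reason. In that regime the intermediate scales $Cr<2^sr<\diam(\operatorname{supp} g)$ contribute averages of $g$ that a truncated maximal function at scale $Cr$ has no a priori control over; a concrete example is $u$ constant $>0$ on $\Delta(x,500Cr)$ and dropping to $0$ on an outer annulus, where the left-hand side is $0$ but your far tail is a strictly positive multiple of $r$.

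The paper avoids this by building the localization into the function before applying the HMT estimate: first truncate $u$ to $\widetilde u\in[u(x),u(y)]$, which both forces $|\nablat\widetilde u|\le|\nablat u|$ and, crucially, bounds $\|\widetilde u\|_\infty$ by $|u(x)-u(y)|$; then multiply by a smooth cutoff $\varphi_{2^\alpha R}$ supported in $B(x,2^{\alpha+1}R)$. The tail of the HMT estimate for $v=\varphi\widetilde u$ then truly terminates at scale $\approx 2^{\alpha}R$, and the only price is the product-rule error $\widetilde u\,\nablat\varphi$, which on the transition annulus is $\lesssim |u(x)-u(y)|/(2^\alpha R)$ and after integration contributes only $\lesssim|u(x)-u(y)|/2^\alpha$ — small enough to be absorbed into the left-hand side by choosing $\alpha$ large and fixed. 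This truncation-plus-cutoff device is the core idea your proposal is missing; without it the far tail genuinely cannot be dominated by $M_{C|x-y|}$.
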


Let $u$ be a compactly supported Lipschitz function on $\pom$. As we noted in Section \ref{subsection:gradients}, the tangential gradient $\nablat u$ exists for almost every point $x \in \pom$. By Lemma \ref{lemma:norm_of_tangential_gradient}, we know that \eqref{bound:norm_of_tangential_gradient} holds for almost every point $x \in \pom$. Thus, the points in Lemma \ref{lemma:pointwise_bound} are almost all points in $\pom$, as is required in Theorem \ref{theorem:poincare_pointwise}.

In the proof of Lemma \ref{lemma:pointwise_bound}, we use a smooth cutoff function for balls. The construction of the function uses the usual mollifier technique. For the convenience of the reader -- particularly because we need a quantitative bound for the norm of the gradient -- we give the key details below.

Let us start by defining the standard mollifier. We set $\eta \colon \R^{n+1} \to \R$,
\begin{align*}
  \eta(X) = \left\{ \begin{array}{cl}
                      c_n e^{-1 / (1 - |X|^2)}, &\text{if } |X| < 1 \\
                      0, &\text{if } |X| \ge 1
                    \end{array} \right. ,
\end{align*}
where the constant $c_n$ is chosen so that $\int_{\R^{n+1}} \eta(X) \, dX = 1$. For any $\kappa > 0$, we set
\begin{align*}
  \eta_\kappa(X) = \frac{1}{\kappa^{n+1}} \eta\left(\frac{X}{\kappa}\right).
\end{align*}
Notice that $\text{supp} \, \eta_\kappa \subset B(0,\kappa)$. Using the standard mollifier, we define the smooth cutoff function $\varphi_\kappa$ for the ball $B(0,\kappa)$ characteristic function using convolutions: we set $\varphi_\kappa \colon \R^{n+1} \to \R$,
\begin{align}
  \label{defin:cutoff_function} \varphi_\kappa(X) = \eta_{\kappa} * \chi_{B(0,\kappa)}(X).
\end{align}
Thus, we have
\begin{align*}
  \varphi_\kappa(X) 
  &= \int_{\R^{n+1}} \eta_{\kappa}(X-Y) \, \chi_{B(0,\kappa)}(Y) \, dY \\
  &= \int_{\R^{n+1}} \eta_{\kappa}(Y) \, \chi_{B(0,\kappa)}(X-Y) \, dY = \int_{B(0,\kappa)} \eta_{\kappa}(Y) \, \chi_{B(0,\kappa)}(X-Y) \, dY.
\end{align*}
From this representation, we see that $\varphi_\kappa \equiv 1$ on $B(0,\kappa)$ and $\varphi_\kappa \equiv 0$ on $\R^{n+1} \setminus B(0,2\kappa)$. By the standard theory of mollifiers (see e.g. \cite[p.~123--124]{evansgariepy}), we know that $\varphi_\kappa$ is smooth and it satisfies
\begin{align}
  \label{gradient_integral} \nabla \varphi_\kappa(X) = \int_{\R^{n+1}} \nabla_X \eta_{\kappa}(X-Y) \, \chi_{B(0,\kappa)}(Y) \, dY.
\end{align}
In particular, by the construction, the smoothness and compact support of $\eta$ and \eqref{gradient_integral}, for $X = (X_1, X_2, \ldots, X_{n+1})$ we have
\begin{align*}
  \left| \frac{\partial_i}{\partial X_i} \varphi_\kappa(X) \right| 
  &= \left| \int_{\R^{n+1}} \frac{\partial_i}{\partial X_i} \, \eta_{\kappa}(X-Y) \chi_{B(0,\kappa)}(Y) \, dY \right| \\
  &\le \int_{\R^{n+1}} \left| \frac{\partial_i}{\partial X_i} \eta_{\kappa}(X-Y) \right| \, dY = \frac{1}{\kappa} \int_{\R^{n+1}} \left| \frac{\partial_i}{\partial X_i} \eta(X-Y) \right| \, dY \le \frac{C}{\kappa},
\end{align*}
where the constant $C$ does not depend $\kappa$. Thus, we have $|\nabla \varphi_\kappa(X)| \lesssim \tfrac{1}{\kappa}$. We note that this implies that $\varphi_\kappa$ is a compactly supported Lipschitz function. Therefore the tangential gradient of $\varphi_\kappa$ exists for almost every point $x \in \pom$, and \eqref{estimate:gradient_norm} and \eqref{lemma:norm_of_tangential_gradient} imply that
\begin{align}
  \label{estimate:gradient_norm} |\nablat \varphi_\kappa(x)| \lesssim \frac{1}{\kappa}.
\end{align}
By translating and adjusting the constant $\kappa$, we can construct a smooth cutoff function for any ball $B(X,r)$ in $\R^{n+1}$.

\begin{proof}[Proof of Lemma \ref{lemma:pointwise_bound}]
  Let us fix points $x,y \in \pom$ such that the tangential gradient exists at $x$ and $y$ and \eqref{bound:norm_of_tangential_gradient} holds for $x$ and $y$.
  
  We prove the bound by using truncation  and localization arguments which help us to control the values of the function inside balls $\Delta(x,c|x-y|)$ and $\Delta(y,c|x-y|)$ and allow us to deal with the tail in the right hand side of inequality \eqref{ineq:hmt1_poincare}. 
   We first assume that $\diam(\pom) = \infty$. Without loss of generality, we may assume that $u(x) = 0 < u(y)$. In particular, we have $|u(x) - u(y)| = u(y)$. We set
  \begin{align*}
    \widetilde{u}(z) \coloneqq \left\{ \begin{array}{cl}
                                         u(z), &\text{if } \ u(x) < u(z) < u(y) \\
                                         0,    &\text{if } \ u(z) \le 0 \\
                                         u(y), &\text{if } \ u(z) \ge u(y)
                                       \end{array} \right. .
  \end{align*}
  Notice that $\widetilde{u}$ is a Lipschitz function since we get it by truncating the Lipschitz function $u$ from above and below. Thus, by Lemma \ref{lemma:norm_of_tangential_gradient}, we have
  \begin{align*}
    |\nablat u(z_0)| = \limsup_{\pom \ni z \to z_0} \frac{|u(z_0) - u(z)|}{|z_0-z|}
    \ \ \ \text{and} \ \ \ 
    |\nablat \widetilde{u}(z_0)| = \limsup_{\pom \ni z \to z_0} \frac{|\widetilde{u}(z_0) - \widetilde{u}(z)|}{|z_0-z|}
  \end{align*}
  for $\sigma$-a.e. $z_0 \in \pom$. Let $z_0 \in \pom$ be a point such that the previous holds. By continuity, we have the following:
  \begin{enumerate}
    \item[i)] If $u(z_0) < u(x) = 0$ or $u(z_0) > u(y)$, then $|\nablat \util(z_0)| = 0 \le |\nablat u(z_0)|$.
    
    \item[ii)] If $u(x) < u(z_0) < u(y)$, then $|\nablat \util(z_0)| = |\nablat u(z_0)|$.
    
    \item[iii)] If $u(z_0) = u(x) = 0$, then there exists $r = r_{z_0} > 0$ such that if $|z_0-z| < r$, then $u(z) < u(y)$. For such $z$, 
    \begin{enumerate}
      \item[$\bullet$] if $u(z) > u(x)$, then $|\util(z_0) - \util(z)| = |u(z_0) - u(z)|$, and
      \item[$\bullet$] if $u(z) \le u(x)$, then $|\util(z_0) - \util(z)| = 0 \le |u(z_0) - u(z)|$.
    \end{enumerate}
    In particular, we have $|\nablat \util(z_0)| \le |\nablat u(z_0)|$.
    
    \item[iv)] If $u(z_0) = u(y)$, then $|\nablat \util(z_0)| \le |\nablat u(z_0)|$ by a similar argument as in iii).
  \end{enumerate}
  Thus, for almost every $z_0 \in \pom$, the tangential gradients exist for $u$ and $\util$ and we have
  \begin{align}
    \label{bound:tangential_gradients} |\nablat \util(z_0)| \le |\nablat u(z_0)|.
  \end{align}
  
  Let us then start processing $|u(x) - u(y)|$. We denote
  \begin{align*}
    R \coloneqq |x-y|, \ \
    \Delta_0 \coloneqq \Delta(x,R), \ \
    \Delta_0' \coloneqq \Delta(y,R), \ \
    \Delta_j \coloneqq 2^{-j} \Delta_0, \ \ \text{and} \ \
    \Delta_j' \coloneqq 2^{-j} \Delta_0'.
  \end{align*}
  Let $\varphi = \varphi_{2^\alpha R}$ be a smooth cutoff function for the ball $B(x,2^\alpha R)$ as in \eqref{defin:cutoff_function} (after translation, for the choice $\kappa = 2^\alpha R$) for large $\alpha \in \N$ to be fixed later, and let $v \coloneqq \varphi\,\util$. By Lemma \ref{lemma:hmt1_poincare}, there exist numbers $a_{\Delta_j} \in \R$ such that
  \begin{align}\label{eq:aki1}
    \sup_{\lambda>0} \lambda \,\sigma\big(\big\{z\in\Delta_j \colon |v(z) - a_{\Delta_j}|>\lambda\big\}\big)
    \le Cr_j  \,\sigma(\Delta_j)\,S_j,
  \end{align}
  where $r_j \coloneqq 2^{-j}R$ and
  \begin{align}
   \label{defin:sj} S_j \coloneqq \fint_{5\Delta_j} |\nablahmt v | \, d\sigma + \sum_{k=2}^{\infty} \frac{2^{-k}}{\sigma(2^k\Delta_j)} \int_{2^k \Delta_j \setminus 2^{k-1} \Delta_j} |\nablahmt v| \, d\sigma.
  \end{align}
  An analogous estimate holds when we replace $\Delta_j$, $a_{\Delta_j}$ and $S_j$ by $\Delta_j'$, $a_{\Delta_j'}$ and $S_j'$, respectively, where $S_j'$ is defined as $S_j$ with $\Delta_j'$ in place of $\Delta_j$. We claim now that
  \begin{align}
    \label{eq:aki2} \lim_{j\to\infty} a_{\Delta_j}=v(x) = \util(x) = u(x)=0.
  \end{align}
  Indeed, for any $\lambda \in(0,|a_{\Delta_j}- \langle v\rangle_{\Delta_j}|)$ and any $z\in\Delta_j$, we have
  \begin{align*}
    \lambda < |a_{\Delta_j}- \langle v\rangle_{\Delta_j}| \le |v(z) - a_{\Delta_j}| + |v(z) - \langle v\rangle_{\Delta_j}|.
  \end{align*}
  Hence, by \eqref{eq:aki1} and Chebyshev's inequality, we get
  \begin{align*}
    \sigma(\Delta_j) 
    &\le \sigma\big(\big\{z\in\Delta_j:\,|v(z) - a_{\Delta_j}|>\lambda/2\big\} + 
         \sigma\big(\big\{z\in\Delta_j:\,|v(z) -  \langle v\rangle_{\Delta_j}|>\lambda/2\big\} \\
    &\lesssim \frac1\lambda\,r_j  \,\sigma(\Delta_j)\,S_j + \frac1\lambda\int_{\Delta_j}|v -  \langle v\rangle_{\Delta_j}|\,d\sigma
  \end{align*}
  Therefore,
  \begin{align}
    \label{ineq:lambda} \lambda\lesssim r_j  \,S_j + \fint_{\Delta_j}|v -  \langle v\rangle_{\Delta_j}|\,d\sigma.
  \end{align}
  Since $v$ is a compactly supported Lipschitz function, Lemma \ref{lemma:properties_of_hmt_objects} and Lemma \ref{lemma:norm_of_tangential_gradient} give us $|S_j|\lesssim\|\nablahmt v\|_{L^\infty(\sigma)}<\infty$, and thus $r_j  \,S_j\to 0$ as $j\to\infty$. By the continuity of $v$, we also know that $\fint_{\Delta_j}|v(z) -  \langle v\rangle_{\Delta_j}|\,d\sigma\to0$ as $j\to\infty$. Then, choosing $\lambda = |a_{\Delta_j}- \langle v\rangle_{\Delta_j}|/2$ gives us \eqref{eq:aki2} by \eqref{ineq:lambda} and continuity of $v$.
 
  Analogously to \eqref{eq:aki2}, we also have
  \begin{align*}
    \lim_{j\to\infty} a_{\Delta_j'}=v(y)=\util(y) = u(y).
  \end{align*}
  Thus,
  \begin{align*}
    u(y) = |u(x) - u(y)|
    &= |v(x) - v(y)| \\
    &= \left| \left( \sum_{j=0}^\infty \left( a_{\Delta_{j+1}} - a_{\Delta_j} \right) + a_{\Delta_0} \right) 
    - \left( \sum_{j=0}^\infty \left( a_{\Delta_{j+1}'} - a_{\Delta_j'} \right) + a_{\Delta_0'} \right) \right| \\
    &\le \sum_{j=0}^\infty \left| a_{\Delta_{j+1}} - a_{\Delta_j} \right|
    + \sum_{j=0}^\infty \left| a_{\Delta_{j+1}'} - a_{\Delta_j'} \right| + \left| a_{\Delta_0} - a_{\Delta_0'} \right| \\
    &\eqqcolon I + II + III.
  \end{align*}
  Let us consider the sum $I$ first. We analyze $I$ by applying \eqref{eq:aki1} again for $|a_{\Delta_j}- a_{\Delta_{j+1}}|$. For any $\lambda \in (0,|a_{\Delta_j}- a_{\Delta_{j+1}}|)$ and any $z\in\Delta_j$, we have
  \begin{align*}
    \lambda < |a_{\Delta_j}- a_{\Delta_{j+1}}| \leq |v(z) - a_{\Delta_j}| + |v(z) - a_{\Delta_{j+1}}|,
  \end{align*}
  and therefore \eqref{eq:aki1} gives us
  \begin{align*}
    \sigma(\Delta_j)
    &\leq \sigma\big(\big\{z\in\Delta_j \colon |v(z) - a_{\Delta_j}|>\lambda/2\big\} + 
    \sigma\big(\big\{z\in\Delta_{j+1} \colon |v(z) -  a_{\Delta_{j+1}}|>\lambda/2\big\}\\
    &\lesssim \frac1\lambda\,r_j  \,\sigma(\Delta_j)\,S_j + \frac1\lambda\,r_{j+1}  \,\sigma(\Delta_{j+1})\,S_{j+1}
    \lesssim \frac1\lambda\,r_j  \,\sigma(\Delta_j)\,S_j.
  \end{align*}
  Thus, $\lambda\lesssim r_j \,S_j.$ Since this holds for all $\lambda\in(0,|a_{\Delta_j}- a_{\Delta_{j+1}}|)$, we have
  \begin{align}
    \label{eq:adjj} |a_{\Delta_j} - a_{\Delta_{j+1}}|\lesssim r_j \,S_j.
  \end{align}
  This then gives us
  \begin{align*}
    I
    &\lesssim \sum_{j=0}^\infty r_j\,S_j \\
    &\overset{\text{(A)}}{\lesssim} \sum_{j=0}^\infty \left( 2^{-j}R \fint_{5 \Delta_j} |\nablat (\util\varphi)| \, d\sigma 
    + 2^{-j}R \sum_{k=2}^{j+\alpha+1} 2^{-k} \fint_{2^k \Delta_j} |\nablat (\util\varphi)| \, d\sigma \right) \\
    &\overset{\text{(B)}}{\le} \sum_{j=0}^\infty 2^{-j}R \sum_{k=2}^{j+\alpha+1} 2^{-k}  \fint_{2^k \Delta_j} |\nablat\util| \, d\sigma \\
    &\hspace{2cm} + \sum_{j=0}^\infty 2^{-j}R \cdot 2^{-j-\alpha-1}  \frac{1}{\sigma(2^{\alpha+1}\Delta_0)} \int_{2^{\alpha+1}\Delta_0 \setminus 2^\alpha \Delta_0} |\util(z) \nablat\varphi(z)|\, d\sigma(z) \\
    &\overset{\text{(C)}}{\lesssim} R \cdot M_{2^{\alpha+2}R}(\nablat \util)(x) + \frac{\util(y)}{2^\alpha},
  \end{align*}
  where we used
  \begin{enumerate}
    \item[(A)] the definition of $S_j$ (see \eqref{defin:sj}), Lemma \ref{lemma:properties_of_hmt_objects} and the fact that $\varphi$ vanishes outside $2^{\alpha+1} \Delta_0$,
    \item[(B)] Ahlfors--David regularity, the product rule for gradients and the properties of the cutoff function $\varphi$: $\varphi \le 1$ everywhere and $\varphi$ is constant on $2^{\alpha} \Delta_0$ and outside $2^{\alpha+1} \Delta_0$,
    \item[(C)] the fact that $2^k \Delta_j \subset 2^{\alpha+1} \Delta_0$ for all the relevant $k$ and $j$, the definition of the truncated Hardy--Litlewood maximal operator, the fact that $\util(z) \le \util(y)$ for all $z$, and \eqref{estimate:gradient_norm}.
  \end{enumerate}
  Using the same techniques gives us the bound
  \begin{align*}
    II \lesssim R \cdot M_{2^{\alpha+2}R}(\nablat \util)(y) + \frac{\util(y)}{2^\alpha}.
  \end{align*}  
  As for $III$, we notice that
  \begin{align*}
    III
    \leq \big| a_{\Delta_0} - a_{2\Delta_0}\big| + \big|a_{2\Delta_0} -  a_{\Delta_0'} \big| 
    \lesssim r_0\,S_0.
  \end{align*}
  Indeed, the fact that $| a_{\Delta_0} - a_{2\Delta_0}| \lesssim r_0\,S_0$ follows from \eqref{eq:adjj}, and an analogous estimate holds for the term $|a_{2\Delta_0} -  a_{\Delta_0'}|$. Thus, the estimate obtained above for the term $I$ is also valid for $III$:
  \begin{align*}
    III \lesssim \sum_{j=0}^\infty r_j\,S_j
    \lesssim R \cdot M_{2^{\alpha+2}R}(\nablat \util)(x) + \frac{\util(y)}{2^\alpha}.
  \end{align*}

  Note that none of the implicit constants for the bounds for $I$, $II$ and $III$ depends on $\alpha$. Thus, recalling that $u(x) = 0 < u(y) = \util(y)$ and $R = |x-y|$, there exists a constant $C$ such that
  \begin{align*}
    u(y) = |u(x) - u(y)| 
    &\le I + II + III \\
    &\le CR(M_{2^{\alpha+2}R} \left(\nablat \util)(x) + M_{2^{\alpha+2}R}(\nablat \util)(y) \right) + \frac{C}{2^\alpha} \util(y) \\
    &\overset{\eqref{bound:tangential_gradients}}{\le} 2C|x-y| (M_{2^{\alpha+2}R} \left(\nablat u)(x) + M_{2^{\alpha+2}R}(\nablat u)(y) \right) + \frac{C}{2^\alpha} u(y).
  \end{align*}
  By choosing large enough $\alpha$, we may absorb $\frac{C}{2^\alpha} u(y)$ to the left-hand side. This completes the proof for the case $\diam(\pom) = \infty$.
  
  Let us then assume that $\diam(\pom) < \infty$. We have to consider this case separately because in the bounded case there might not exist cutoff functions with gentle enough gradient slope. However, the proof is still based on the previous case. The proof works as  previously if $|x-y| \ll \diam(\pom)$ and thus, we may assume that $|x-y| \approx \diam(\pom)$. Now, by Lemma \ref{lemma:boundary_quasiconvex}, we know that there exists a path $\gamma_{x,y}$ from $x$ to $y$ in $\pom$ such that $\ell(\gamma_{x,y}) \le C_0 |x-y| \approx C_0 \cdot \diam(\pom)$. Using a covering argument for $\gamma_{x,y}$, we find a uniformly bounded number of points $z_0, z_1, \ldots, z_J \in \pom$ with $z_0 = x$, $z_J = y$ and $|z_j - z_{j+1}| \ll \diam(\pom)$. We get
  \begin{align*}
    |u(x) - u(y)|
    &\le \sum_{j=0}^{J-1} |u(z_j) - u(z_{j+1})| \\
    &\lesssim \sum_{j=0}^{J-1} |z_j - z_{j+1}| \left( M_{C|z_j - z_{j+1}|}(|\nablat u|)(z_j) + M_{C|z_j - z_{j+1}|}(|\nablat u|)(z_{j+1}) \right) \\
    &\lesssim |x-y| \left( M_{\widetilde{C}|x-y|}(|\nablat u|)(x) + M_{\widetilde{C}|x-y|}(|\nablat u|)(y) \right),
  \end{align*}
  which is what we wanted.
\end{proof}

Corollary \ref{corollary:poincare} follows now immediately when we combine Theorem \ref{theorem:boundary_poincare} and Lemma \ref{lemma:norm_of_tangential_gradient} with the following key result of Keith (which is an improvement of an earlier result of Heinonen and Koskela \cite[Theorem 1.1]{heinonenkoskela_note}):

\begin{theorem}[{\cite[Theorem 2]{keith}}]
  \label{theorem:keith}
  Let $(X,d,\mu)$ be a complete metric measure space with a doubling measure $\mu$ and $1 \le p < \infty$. Then the following conditions are equivalent:
  \begin{enumerate}
    \item[(a)] the space $(X,d,\mu)$ supports a weak $p$-Poincar\'e inequality,
    \item[(b)] there exist constants $C, \lambda \ge 1$ such that
    \begin{align*}
      \fint_B \left| u(x) - \langle u \rangle_B \right| \, d\mu(x) \le C \diam(B) \left( \fint_{\lambda B} \left( \Lip  u(x) \right)^p \, d\mu(x) \right)^{1/p}
    \end{align*}
    for every compactly supported Lipschitz function $u$ and every ball $B$ in $X$, where
    \begin{align*}
      \Lip u(x) \coloneqq \limsup_{\substack{y \to x \\ y \neq x}} \frac{|f(x) - f(y)|}{d(x,y)}.
    \end{align*}
  \end{enumerate}
\end{theorem}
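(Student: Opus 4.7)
The plan is to prove the equivalence by establishing the two directions separately; (a) $\Rightarrow$ (b) is essentially formal, while (b) $\Rightarrow$ (a) is the substantive content and proceeds via Lipschitz discretization combined with the pointwise characterization of Poincar\'e inequalities (Theorem \ref{theorem:poincare_pointwise}).

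For the direction (a) $\Rightarrow$ (b), the observation is that for any compactly supported Lipschitz function $u$ on $X$, the pointwise slope $\Lip u$ is a Borel-measurable (after redefinition on a null set) upper gradient of $u$. Indeed, for any rectifiable curve $\gamma$ parametrized by arc length, the composition $u \circ \gamma$ is Lipschitz with derivative bounded almost everywhere by $\Lip u \circ \gamma$, whence $|u(x) - u(y)| \le \int_\gamma \Lip u \, ds$. Applying the weak $p$-Poincar\'e inequality from (a) with the upper gradient $\rho = \Lip u$ immediately yields (b).

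For (b) $\Rightarrow$ (a), the plan is to use Theorem \ref{theorem:poincare_pointwise} to reduce (a) to the pointwise maximal-function bound
\begin{align*}
|u(x) - u(y)| \lesssim d(x,y) \bigl( M_{\lambda d(x,y)}(g^p)(x) + M_{\lambda d(x,y)}(g^p)(y) \bigr)^{1/p}
\end{align*}
for $\mu$-a.e.\ $x,y \in X$, whenever $u \in L^1_\loc$ and $g \in L^p_\loc$ is an upper gradient of $u$. To obtain this, one constructs Lipschitz discretizations $u_\varepsilon$ of $u$ at scales $\varepsilon > 0$: fix a maximal $\varepsilon$-separated net $\{x_i\} \subset X$ and a subordinate Lipschitz partition of unity $\{\varphi_i\}$ of slope $\lesssim 1/\varepsilon$, then set $u_\varepsilon \coloneqq \sum_i \langle u \rangle_{B(x_i,\varepsilon)} \varphi_i$. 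The crucial pointwise estimate is
\begin{align*}
\Lip u_\varepsilon(x) \lesssim \bigl( M_{C\varepsilon}(g^p)(x) \bigr)^{1/p}
\end{align*}
for $\mu$-a.e.\ $x$, obtained by bounding $|\langle u \rangle_{B(x_i,\varepsilon)} - \langle u \rangle_{B(x_j,\varepsilon)}|$ for neighboring centers via chains of connecting rectifiable curves and the upper gradient inequality. Applying (b) to $u_\varepsilon$ combined with Theorem \ref{theorem:poincare_pointwise} (in the Lipschitz setting, with $\Lip u_\varepsilon$ in the role of $g$) produces the pointwise bound at scale $\varepsilon$ for $u_\varepsilon$; letting $\varepsilon \to 0$ via Lebesgue differentiation and the weak-type bound for the maximal operator recovers the desired inequality for $u$.

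The main obstacle is establishing the Lipschitz-slope bound $\Lip u_\varepsilon \lesssim (M_{C\varepsilon}(g^p))^{1/p}$. Here completeness of $(X,d,\mu)$ enters crucially: one needs sufficiently many rectifiable curves joining nearby points of the net so that averages $\langle u \rangle_{B(x_i,\varepsilon)}$ and $\langle u \rangle_{B(x_j,\varepsilon)}$ can be compared via integrals of $g$. The strategy is to integrate the upper gradient inequality over a Fubini-type family of curves connecting $B(x_i,\varepsilon) \times B(x_j,\varepsilon)$, and control the result by integrals of $g$ over a slightly enlarged ball using doubling, which produces the maximal function bound after averaging. The proper interaction of doubling, completeness and the $p$-integrability of $g$ is what makes the Lipschitz characterization equivalent to the full Heinonen--Koskela Poincar\'e inequality; without completeness, the upper gradient notion is too weak for the equivalence to hold.
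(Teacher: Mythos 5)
This statement is not proved in the paper: it is cited verbatim from Keith's article \cite{keith} as a known result, so there is no in-paper argument to compare against. Your proposal should therefore be judged against what is actually required to make the implication work.

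The direction (a) $\Rightarrow$ (b) is essentially correct: for a compactly supported Lipschitz $u$, the function $\Lip u$ is Borel (it can be written as an infimum over rational $r$ of lower semicontinuous suprema, so no ``redefinition on a null set'' is actually needed), and the chain rule argument along an arc-length-parametrized rectifiable curve gives $|(u \circ \gamma)'(t)| \le \Lip u(\gamma(t))$ a.e., so $\Lip u$ is an upper gradient. Feeding this into (a) gives (b).

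The direction (b) $\Rightarrow$ (a), which is the whole content of Keith's theorem, has a genuine gap in your sketch. You propose to compare $\langle u \rangle_{B(x_i,\varepsilon)}$ and $\langle u \rangle_{B(x_j,\varepsilon)}$ by ``integrating the upper gradient inequality over a Fubini-type family of curves connecting $B(x_i,\varepsilon)\times B(x_j,\varepsilon)$.'' But you never establish that such a family of curves exists, and in a general complete doubling space it need not: without further assumptions the upper gradient condition can be completely vacuous (no rectifiable curves join distinct points, e.g.\ in a snowflaked space), and yet (b) could still be a nontrivial hypothesis. Having a ``rich enough'' family of curves to compare averages of $u$ via integrals of $g$ is precisely what a Poincar\'e inequality (or a modulus lower bound) guarantees, so invoking it here is circular. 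The key non-formal step --- and the one your sketch elides --- is that hypothesis (b), applied to suitable test Lipschitz functions, already forces the space to be quasiconvex (indeed the paper itself quotes exactly this phenomenon as Theorem \ref{theorem:poincare_quasiconvex}, via \cite{durandcartagenaetal}); only after quasiconvexity is in hand does one have the curves needed for the chaining estimate $\bigl|\langle u\rangle_{B(x_i,\varepsilon)} - \langle u\rangle_{B(x_j,\varepsilon)}\bigr| \lesssim \varepsilon\, (M_{C\varepsilon}(g^p))^{1/p}$ and hence the bound on $\Lip u_\varepsilon$. Once that is repaired, the remaining steps --- apply Theorem \ref{theorem:poincare_pointwise} to $u_\varepsilon$, let $\varepsilon\to 0$ with Lebesgue differentiation and the weak $(1,1)$ boundedness of the maximal operator --- are reasonable in outline and are in the spirit of Keith's argument.

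In short: (a) $\Rightarrow$ (b) is fine; for (b) $\Rightarrow$ (a) you must first extract quasiconvexity (or an equivalent curve-richness statement) from the Lipschitz Poincar\'e inequality before the discretization and chaining can even be set up, and this step is missing.
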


As a consequence of Theorem \ref{theorem:boundary_poincare}, we get the following improvement of Theorem \ref{theorem:hmt_poincare} in bounded $2$-sided chord-arc domains:

\begin{corollary}
  \label{corollary:poincare_for_hmt_gradient}
  Let $\Omega \subset \R^{n+1}$ be a bounded $2$-sided chord-arc domain, and let $1 < p < \infty$. There exists a constant $C = C(\Omega,p)$ such that for every $f \in L^p_1(\pom)$, $x \in \pom$ and $r > 0$ we have
  \begin{align*}
    \left( \fint_{\Delta} |f - \langle f \rangle_{\Delta}|^p \, d\sigma \right)^{1/p} \le C r \left( \fint_{\Lambda \Delta} |\nabla_{\HMT} f |^p \, d\sigma \right)^{1/p},
  \end{align*}
  where $\Delta \coloneqq \Delta(x,r)$ and $\Lambda$ is the constant from Theorem \ref{theorem:boundary_poincare}.
\end{corollary}

It is likely that the boundedness assumption is not necessary for Corollary \ref{corollary:poincare_for_hmt_gradient} but since the density results in \cite[Section 4.3]{hofmannmitreataylor} are stated in the case where the boundary $\pom$ is compact, we only consider this case. We do not consider the case $p = 1$ since the theory of the Hofmann--Mitrea--Taylor Sobolev spaces $L^p_1(\pom)$ has been developed only for $1 < p < \infty$.

\begin{proof}[Proof of Corollary \ref{corollary:poincare_for_hmt_gradient}]
  Let $f \in L^p_1(\pom)$, $x \in \pom$, $r > 0$ and $\Delta \coloneqq \Delta(x,r)$. By \cite[Corollary 4.28]{hofmannmitreataylor}, we know that Lipschitz functions form a dense subset of $L^p_1(\pom)$ when $L^p_1(\pom)$ is equipped with the natural Sobolev-type norm (see \cite[Section 3.6]{hofmannmitreataylor} for details). In particular, since $\Omega$ is a bounded $2$-sided chord-arc domain, there exists a sequence of compactly supported Lipschitz functions $(f_k)$ such that $f_k \to f$ and $\nabla_{\HMT} f_k \to \nabla_{\HMT} f$ in $L^p(\pom)$. By Theorem \ref{theorem:boundary_poincare}, the functions $f_k$ satisfy a weak $1$-Poincar\'e inequality with respect to the tangential gradient, which implies a weak $(p,p)$-Poincar\'e inequality with respect to tangential gradient by H\"older's inequality and \cite[Corollary 9.14]{heinonenetal}. These observations together with Lemma \ref{lemma:properties_of_hmt_objects} give us
  \begin{align*}
    \left(\fint_\Delta |f_k - \langle f_k \rangle_\Delta|^p \, d\sigma \right)^{1/p} \le C r \left( \fint_{\Lambda \Delta} |\nabla_{\smallt} f_k|^p \, d\sigma \right)^{1/p} = C r \left(\fint_{\Lambda \Delta} |\nabla_{\HMT} f_k|^p \, d\sigma \right)^{1/p}.
  \end{align*}
  Letting $k \to \infty$ then gives us
  \begin{align*}
    \left( \fint_\Delta |f - \langle f \rangle_\Delta|^p \, d\sigma \right)^{1/p} \le C r \left( \fint_{\Lambda \Delta} | \nabla_{\HMT} f|^p \, d\sigma \right)^{1/p}
  \end{align*}
  by standard $L^p$ convergence arguments, which is what we wanted.
\end{proof}

\section{A counterexample and questions}
\label{section:questions}

By Corollary \ref{corollary:poincare}, we know that the boundary of any $2$-sided chord-arc domain supports weak Heinonen--Koskela type Poincar\'e inequalities. It is natural to ask the following:
\begin{enumerate}
  \item[1)] Can Corollary \ref{corollary:poincare} be reversed, i.e., if $\Omega \subset \R^{n+1}$ is an open set with $n$-dimensional Ahlfors--David regular boundary such that $\pom$ supports weak Poincar\'e inequalities, is $\Omega$ a $2$-sided chord-arc domain?
  
  \item[2)] Can the assumptions of Corollary \ref{corollary:poincare} be weakened in the obvious way, i.e., if $\Omega \subset \R^{n+1}$ is a ($1$-sided) chord-arc domain with connected boundary, does $\pom$ support weak Poincar\'e inequalities?
\end{enumerate}
In the second question, the connectivity assumption for $\pom$ is necessary since weak Poincar\'e inequalities imply connectivity (see e.g. \cite[Theorem 17.1]{cheeger}). However, the answer to both of these questions is no. For the first question, this follows from Example \ref{example:non-2-sided-cad_poincare} below. For the second question, this follows from the example constructed by Mourgoglou and the second named author in \cite[Section 10]{mourgogloutolsa2}. They construct a chord-arc domain with connected boundary such that the tangential regularity problem for the Laplacian is not solvable in $L^p$ for any $1 \le p < \infty$ (recall Definition \ref{defin:regularity_problem}). The boundary of this chord-arc domain cannot support weak Poincar\'e inequalities by \cite[Theorem 1.2]{mourgogloutolsa2}.

For our example, we need some results in the literature. In particular, we need the following result of Heinonen and Koskela about how weak Poincar\'e inequalities survive under unions. We formulate the result only in our context but we note that the result holds more generally for certain types of unions of Ahlfors--David regular metric spaces.

\begin{theorem}[{\cite[special case of Theorem 6.15]{heinonenkoskela}}]
  \label{theorem:glueing}
  Let $E_1, E_2 \subset \R^{n+1}$ be two $n$-ADR sets such that both $E_1$ and $E_2$ support a weak $p$-Poincar\'e inequality for some $1 \le p < \infty$, and $\Hc^n(E_1 \cap E_2 \cap B(x,r)) \ge c r^n$ for all $x \in E_1 \cap E_2$ and all $0 < r < \min\{\diam(E_1),\diam(E_2)\}$. Then also the union $E_1 \cup E_2$ supports a weak $p$-Poincar\'e inequality when we equip $E_1 \cup E_2$ with the metric $d$ that equals the usual Euclidean distance individually on $E_1$ and $E_2$ and
  \begin{align}
    \label{defin:new_metric} d(x,y) \coloneqq \inf_{Z \in E_1 \cap E_2} |X - Z| + |Z - Y|
  \end{align}
  for $X \in E_1 \setminus E_2$ and $Y \in E_2 \setminus E_1$.
\end{theorem}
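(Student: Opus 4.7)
The plan is to verify the weak $p$-Poincar\'e inequality on $(E_1 \cup E_2, d, \mu)$ (with $\mu = \Hc^n|_{E_1 \cup E_2}$) directly, by applying the Poincar\'e inequalities on $E_1$ and $E_2$ individually and gluing the resulting estimates through the common piece $E_1 \cap E_2$. Two preliminary observations are essential. First, since $d$ restricted to each $E_i$ coincides with the Euclidean metric, every $d$-rectifiable path contained in $E_i$ is also a Euclidean-rectifiable path in $E_i$, so a $d$-upper gradient $\rho$ of $f$ on $E_1 \cup E_2$ automatically restricts to a Euclidean upper gradient of $f|_{E_i}$ on $E_i$. Second, the ADR property of each $E_i$ makes $\mu$ doubling on $(E_1 \cup E_2, d)$: any $d$-ball $B_d(x,r)$ satisfies $\mu(B_d(x,r) \cap E_i) \lesssim r^n$, and $\mu(B_d(x,r) \cap E_i) \gtrsim r^n$ whenever $x \in E_i$, so $\mu(B_d(x,r)) \approx r^n$.

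Now fix a $d$-ball $B = B_d(x_0, r)$ and assume without loss of generality that $x_0 \in E_1$. In the easy case $B \cap (E_2 \setminus E_1) = \emptyset$, the ball $B$ coincides with the Euclidean $E_1$-ball $B_{\mathrm{Eucl}}(x_0, r) \cap E_1$, and the Poincar\'e inequality on $E_1$ applies directly (replacing $\fint_{\lambda B \cap E_1} \rho^p \, d\mu$ by $\fint_{\lambda B} \rho^p \, d\mu$ via the ADR lower bound on $E_1$). Otherwise, the definition \eqref{defin:new_metric} of $d$ forces the existence of a bridge point $z_0 \in E_1 \cap E_2 \cap B$: any $y \in B \cap (E_2 \setminus E_1)$ is paired with some $z_0 \in E_1 \cap E_2$ satisfying $|x_0 - z_0| + |z_0 - y| < r$, so that $z_0 \in B$. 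The ADR hypothesis on $E_1 \cap E_2$ then supplies a subset $\widetilde{B} \subset B_{\mathrm{Eucl}}(z_0, \delta r) \cap E_1 \cap E_2 \subset 2B$ with $\mu(\widetilde{B}) \gtrsim r^n$ for a small uniform $\delta$. Setting $a := \langle f \rangle_{\widetilde{B}}$ and introducing, for each $i = 1, 2$, the auxiliary Euclidean $E_i$-ball $B_i := B_{\mathrm{Eucl}}(z_0, 2r) \cap E_i \subset 3B$ (which contains both $\widetilde{B}$ and $B \cap E_i$), I combine the weak $p$-Poincar\'e inequality on $E_i$ applied to $B_i$ with the mass comparability $\mu(\widetilde{B}) \approx \mu(B_i) \approx \mu(B) \approx r^n$ to obtain
\begin{align*}
\frac{1}{\mu(B)} \int_{B \cap E_i} |f - a| \, d\mu \lesssim r \left( \fint_{\lambda' B} \rho^p \, d\mu \right)^{1/p}.
\end{align*}
Summing over $i = 1, 2$ and using $\fint_B |f - \langle f \rangle_B| \, d\mu \le 2 \fint_B |f - a| \, d\mu$ then delivers the required inequality.

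The main obstacle I anticipate is handling the degenerate scenario in which one of $B \cap E_i$ carries only a small fraction of the mass of $B$; but this is precisely the case for which the lower ADR bound on $E_1 \cap E_2$ is designed, as it guarantees that the bridge set $\widetilde{B}$ has $\mu$-mass comparable to that of $B$ regardless of how unbalanced the two sides are. All remaining steps reduce to routine comparisons of averages of $\rho^p$ between $E_i$-balls and $d$-balls of comparable radius, which are consequences of doubling. The argument works uniformly in $p \ge 1$ since the exponent is not touched when the bridge is traversed.
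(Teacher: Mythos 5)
The paper does not prove this statement; it imports it directly from Heinonen--Koskela, so there is no in-paper argument to compare against. Your proof is essentially the standard proof of that gluing theorem, and the core of it is sound: since $d$ agrees with the Euclidean metric on each $E_i$, a $d$-upper gradient restricts to a Euclidean upper gradient of $f|_{E_i}$; whenever $B=B_d(x_0,r)$ meets $E_2\setminus E_1$ the definition of $d$ produces a bridge point $z_0\in E_1\cap E_2\cap B$; and the telescoping of averages through $\widetilde B$, using $\widetilde B\subset B_i$ and $\mu(\widetilde B)\approx\mu(B_i)\approx r^n$, gives $|\langle f\rangle_{B_i}-a|\lesssim\fint_{B_i}|f-\langle f\rangle_{B_i}|\,d\Hc^n$, after which the $E_i$-Poincar\'e inequality and the comparability of $\mu(\lambda B_i)$ with $\mu(\lambda'B)$ close the estimate. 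The inclusions $B\cap E_i\subset B_i\subset 3B$ and $\lambda B_i\subset \lambda'B$ that you implicitly need all check out.

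The one point you overstate is that $\mu(\widetilde B)\gtrsim r^n$ holds ``regardless of how unbalanced the two sides are'': the hypothesis on $E_1\cap E_2$ yields this only for $r\lesssim\min\{\diam(E_1),\diam(E_2)\}$. If, say, $\diam(E_2)\ll r\lesssim\diam(E_1)$, the best available bridge set has $\mu(\widetilde B)\approx\diam(E_2)^n\ll r^n$. The argument still survives in that regime, because then $B_2=E_2$ has total mass $\approx\diam(E_2)^n$ as well, so $\mu(B_2)/\mu(\widetilde B)\approx 1$, and comparing $\fint_{E_2}\rho^p$ with $\fint_{\lambda'B}\rho^p$ costs a factor $(r/\diam(E_2))^{n/p}$ that is absorbed by $(\diam(E_2)/r)^{\,n+1-n/p}\le 1$ for every $p\ge1$; but this is an additional computation, not a consequence of the mass bound you asserted. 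You should either record it or restrict the radii to $r<\min\{\diam(E_1),\diam(E_2)\}$, which suffices for the paper's use of the theorem since the pieces glued in Example \ref{example:non-2-sided-cad_poincare} have comparable diameters.
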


We will also use repeatedly the fact that bi-Lipschitz mappings preserve weak Poincar\'e inequalities (see \cite[Proposition 4.16]{bjornbjorn_nonlinear} for an explicit proof in the context of more general metric spaces and inequalities):

\begin{proposition}
  \label{proposition:bilipschitz_invariance}
  Let $E_1, E_2 \subset \R^{n+1}$ be two $n$-ADR sets equipped with metrics $d_1$ and $d_2$, respectively. Suppose that there exists a bi-Lipschitz mapping $\Phi \colon (E_1,d_1) \to (E_2,d_2)$ such that $\Hc^n(A) \approx \Hc^n(\Phi(A))$ for every measurable set $A \subset E_1$ and a uniform implicit constant. If $E_1$ supports a weak $p$-Poincar\'e inequality for some $1 \le p < \infty$, then also $E_2$ supports a weak $p$-Poincar\'e inequality.
\end{proposition}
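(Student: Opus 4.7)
The plan is to transfer the weak $p$-Poincar\'e inequality from $E_1$ to $E_2$ by pulling data back along $\Phi$, applying the assumed inequality on $E_1$, and then pushing the resulting estimate forward. Let $L \ge 1$ denote the bi-Lipschitz constant of $\Phi$ and let $K \ge 1$ denote the implicit constant in the measure comparison $K^{-1} \Hc^n(A) \le \Hc^n(\Phi(A)) \le K\,\Hc^n(A)$. Since $\Phi$ is a bijection, this comparison says that the pushforward $\Phi_\ast(\Hc^n|_{E_1})$ is comparable to $\Hc^n|_{E_2}$ as measures; hence $\int_{E_1} g \circ \Phi\,d\Hc^n \approx \int_{E_2} g\,d\Hc^n$ for every nonnegative Borel function $g$ on $E_2$, and $f \in L^1_{\loc}(E_2)$ implies $f \circ \Phi \in L^1_{\loc}(E_1)$.

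Given $f \in L^1_{\loc}(E_2)$ and an upper gradient $\rho$ of $f$, I would define the pulled-back objects $\widetilde f \coloneqq f \circ \Phi$ and $\widetilde \rho \coloneqq L\,(\rho \circ \Phi)$. The first key step is to verify that $\widetilde \rho$ is an upper gradient of $\widetilde f$ on $(E_1,d_1)$. For any rectifiable path $\gamma$ in $E_1$ from $a$ to $b$, with arc-length parametrization $\gamma_\ell$, the composition $\Phi \circ \gamma_\ell$ is a (non-unit-speed) parametrization of a rectifiable path in $E_2$ from $\Phi(a)$ to $\Phi(b)$ whose metric speed is a.e. bounded above by $L$. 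A one-line change of variables then gives $\int_{\Phi \circ \gamma} \rho \le L \int_\gamma (\rho \circ \Phi) = \int_\gamma \widetilde\rho$, and combining this with the upper gradient inequality for $\rho$ applied along $\Phi \circ \gamma$ transfers the upper gradient property to $\widetilde\rho$ and $\widetilde f$.

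With $\widetilde \rho$ an upper gradient of $\widetilde f$ in hand, I would fix a surface ball $B_2 \coloneqq B(y_2,r) \cap E_2$ in $(E_2,d_2)$, set $y_1 \coloneqq \Phi^{-1}(y_2)$, and consider the comparable ball $B_1 \coloneqq B(y_1, Lr) \cap E_1$. The bi-Lipschitz property of $\Phi$ gives $\Phi^{-1}(B_2) \subset B_1$ and $\Phi(\Lambda B_1) \subset L^2 \Lambda B_2$, where $\Lambda$ is the dilation constant of the weak $p$-Poincar\'e inequality in $E_1$. Applying that inequality on $B_1$ to $\widetilde f$ and $\widetilde \rho$ controls $\fint_{B_1} |\widetilde f - \langle \widetilde f\rangle_{B_1}|\,d\Hc^n$ by a constant times $Lr\,(\fint_{\Lambda B_1} \widetilde\rho^p\,d\Hc^n)^{1/p}$. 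The measure comparison together with the ADR property of both sets (which gives $\Hc^n(B_1) \approx r^n \approx \Hc^n(B_2)$, and analogously for the dilated balls) then lets me convert this into a weak $p$-Poincar\'e inequality for $f$ and $\rho$ on $B_2$ with dilation constant $L^2 \Lambda$, using $\fint_{B_2}|f-\langle f\rangle_{B_2}|\,d\Hc^n \le 2\fint_{B_2}|f-c|\,d\Hc^n$ with $c = \langle\widetilde f\rangle_{B_1}$ to absorb the mismatch between the two averages.

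The main (rather mild) obstacle is purely geometric bookkeeping: since $\Phi$ does not map balls to balls, one cannot work with $\Phi^{-1}(B_2)$ directly but must pass through the enlarged ball $B_1$, and then further through $\Phi(\Lambda B_1) \subset L^2 \Lambda B_2$ instead of $\Lambda B_2$. This introduces factors of $L$ and $L^2 \Lambda$ into the final constants, and the ADR property and the measure comparison are used precisely to show that these distortions only worsen the constants and the dilation factor, without breaking the scale-invariant form of the Poincar\'e inequality.
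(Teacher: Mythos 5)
Your argument is correct, and it is in substance the standard pull-back proof. The paper does not supply its own proof of this proposition; it cites \cite[Proposition 4.16]{bjornbjorn_nonlinear}, which carries out essentially this same strategy: compose with $\Phi$ to define $\widetilde f = f\circ\Phi$ and $\widetilde\rho = L(\rho\circ\Phi)$, check via the metric-speed change of variables along $\Phi\circ\gamma$ that $\widetilde\rho$ is an upper gradient of $\widetilde f$, apply the hypothesis on an enlarged ball $B(\Phi^{-1}(y_2),Lr)$, and transfer back using the measure comparison, absorbing the resulting constant offset by the elementary inequality $\fint_B|f-\langle f\rangle_B|\le 2\fint_B|f-c|$. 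Your accounting of the dilation factor $L^2\Lambda$ and the role of ADR in comparing the masses of the displaced balls is accurate.
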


Let us then construct a disconnected non-chord-arc domain example of a set whose boundary still supports a weak $1$-Poincar\'e inequality:

\begin{example}
  \label{example:non-2-sided-cad_poincare}
  Let us consider a ''twice-pinched annulus`` in $\R^2$ which we construct the following way. We start by considering the boundary of a usual annulus $A \coloneqq B(0,4) \setminus \overline{B(0,3)}$. We remove all the points that lie on the strip $\{(x,y) \in \R^2 \colon -1 < y < 1\}$. This leaves us with four circular arcs: two inner arcs and two outer arcs. We then connect these arcs to each other with four line segments so that the inner arcs connect to outer arcs, and vice versa. This leaves us with a shape that looks like an annulus that has been pinched in two places so that the interior is no longer connected but the boundary is. We let $\Omega$ be the disconnected interior of this pinched annulus (see Figure \ref{figure:pinched_annulus}).
  
  \begin{figure}[ht]
    \includegraphics[scale=0.5]{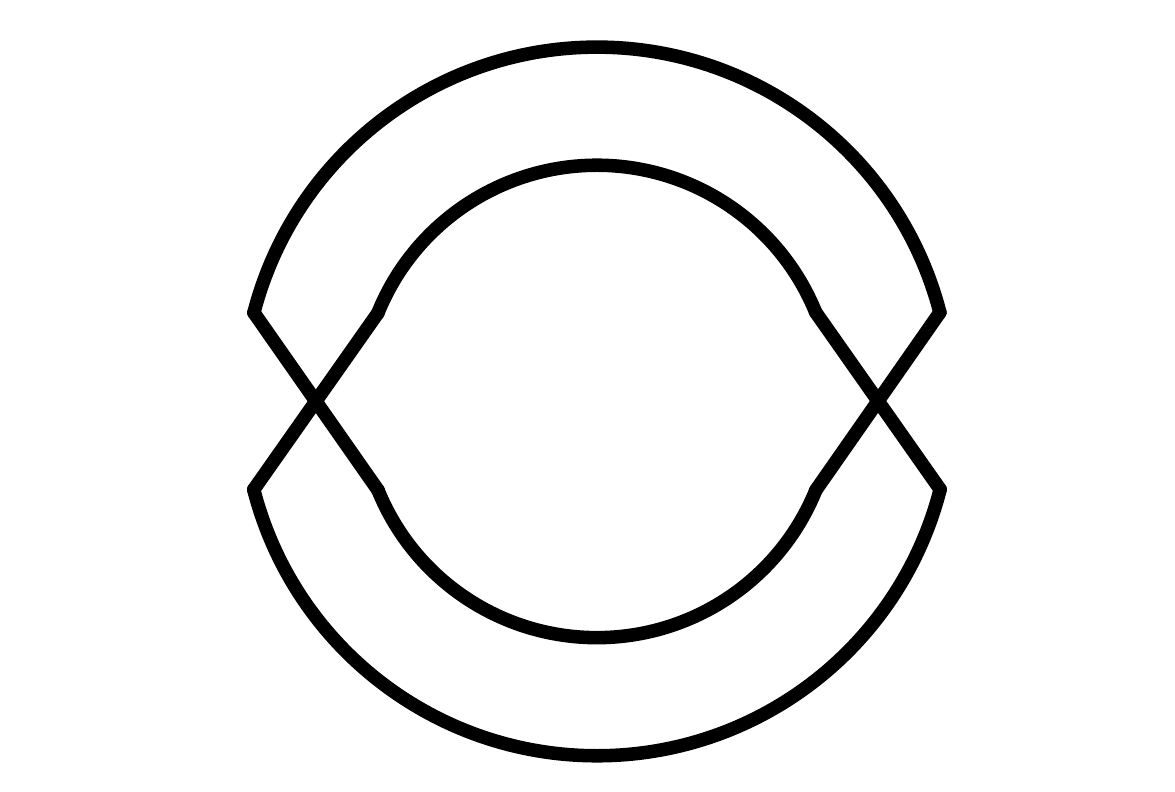}
    \caption{The set $\Omega$ in Example \ref{example:non-2-sided-cad_poincare} is the disconnected interior of a twice-pinched annulus in $\R^2$. Unlike with a usual annulus, the boundary of $\Omega$ is connected (which is one of the minimum requirements Poincar\'e inequalities).}
    \label{figure:pinched_annulus}
  \end{figure}
  
  The set $\Omega$ satisfies the $2$-sided corkscrew condition and the boundary $\pom$ is $1$-ADR but since $\Omega$ is not connected, it is not a chord-arc domain (however, we can easily modify the example to make it connected but still not a chord-arc domain; see Remark \ref{remark:pinched_annulus}). The boundary of $\Omega$ still supports a weak $1$-Poincar\'e inequality. We see this by noticing that we can express $\pom$ as a union of pieces that satisfy weak $1$-Poincar\'e inequalities and that we can glue together with ample intersections to give back $\pom$ (that is, we can use Theorem \ref{theorem:glueing} for these pieces). Indeed, $\pom$ consists of a slightly distorted inner circle and a slightly distorted outer circle. These distorted circles intersect only in two points (the two places where the line segments cross over each other) and therefore they alone are not enough for Theorem \ref{theorem:glueing}. Because of this, as two additional pieces, we take the cross-like unions of the pairs of line segments that cross over each other. These pieces have ample intersections with both of the distorted circles (see Figure \ref{figure:annulus_decomposition}).
  
  \begin{figure}[ht]
    \includegraphics[scale=0.4]{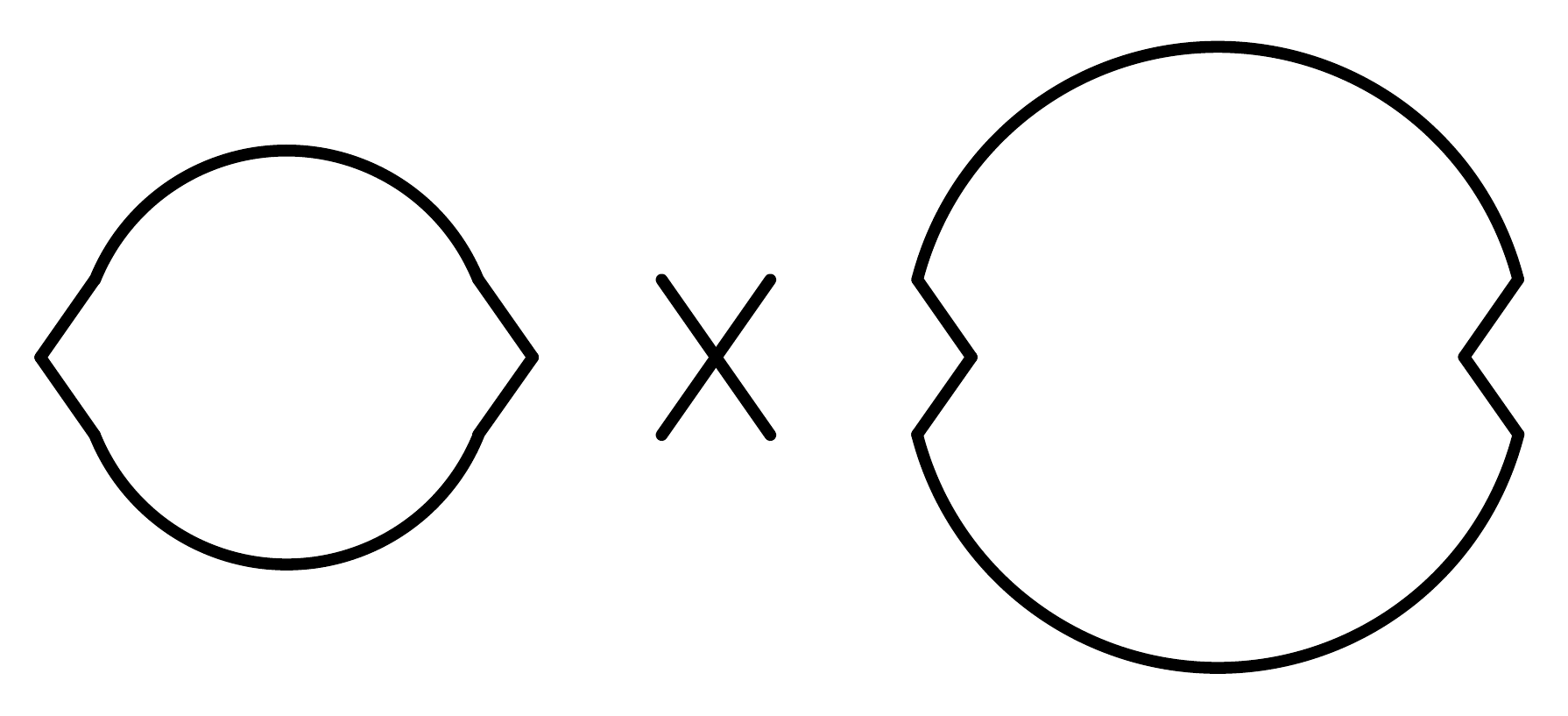}
    \caption{In Example \ref{example:non-2-sided-cad_poincare}, the boundary of $\Omega$ consists of a slightly distorted inner circle (on the left) and a slightly distorted outer circle (on the right) that intersect each other only at two points. However, the cross-like unions of two line segments (one copy in the middle) have ample intersections with both of these distorted circles.}
    \label{figure:annulus_decomposition}
  \end{figure}
  
  As a $1$-dimensional compact Riemannian manifold, a circle supports a weak $1$-Poincar\'e (see \cite[Section 6.1]{heinonenkoskela}). Since both of the distorted circles a bi-Lipschitz equivalent with a regular circle, both of them support a weak $1$-Poincar\'e inequality by Proposition \ref{proposition:bilipschitz_invariance}. As for the unions of two line segments, we first notice that a line segment supports a weak $1$-Poincar\'e inequality because it is bi-Lipschitz equivalent with a piece of the real line (and a connected piece of the real line supports a weak $1$-Poincar\'e inequality by definition and the classical result that the Euclidean space supports a $1$-Poincar\'e inequality (see \cite[Section 8.1]{heinonenetal})). The two line segments in the cross-like union meet only at one point and therefore we cannot use Theorem \ref{theorem:glueing} directly for them. Because of this, we take three line segments and transform them (with bi-Lipschitz mappings) into three V-like shapes which we can then glue one by one to each other with ample intersections to create the original cross-like piece (see Figure \ref{figure:lines_to_cross}).
  
  \begin{figure}[ht]
    \includegraphics[scale=0.9]{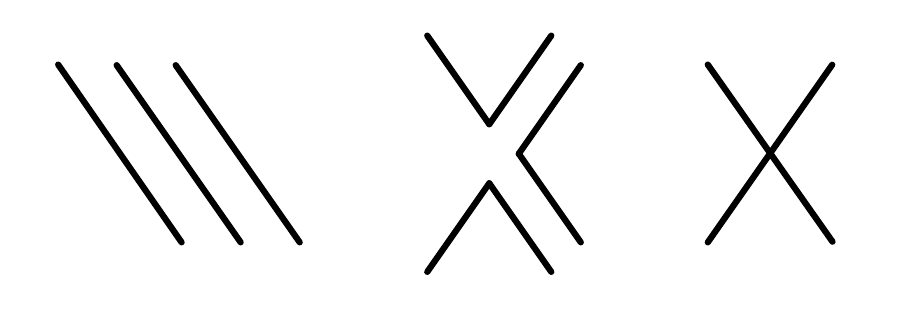}
    \caption{By Theorem \ref{theorem:glueing}, we can preserve weak Poincar\'e inequalities in intersections if the intersections are ample enough. In Example \ref{example:non-2-sided-cad_poincare}, the two intersecting line segments do not have ample intersection but we can create the cross-like shape by glueing three V-like shapes one by one into each other, and we get these V-like shapes by using three line segments and bi-Lipschitz mappings.}
    \label{figure:lines_to_cross}
  \end{figure}
  
  We now get the boundary $\pom$ by glueing first two cross-like pieces to either one of the distorted circles and then glueing the remaining distorted circle to this shape. Glueing sets together using Theorem \ref{theorem:glueing} preserves the weak Poincar\'e inequalities but with a different metric, the one in \eqref{defin:new_metric}. However, for shapes as simple as the ones we use, it is straightforward to see that the new metrics we get are bi-Lipschitz equivalent with the Euclidean metric. Thus, by glueing together the two distorted circles with the help of two cross-like unions of two line segments, we see that $\pom$ supports a weak $1$-Poincar\'e inequality. We note that we had to use two cross-like pieces for the glueing process because with just one cross-like piece the ample intersection requirement of Theorem \ref{theorem:glueing} does not hold for the final step for one of the two intersection points of the distorted circles. By the original, more general form of Theorem \ref{theorem:glueing} in \cite{heinonenkoskela}, we can leave this type of a problematic isolated point out of the glueing process, but this would end up giving us a metric space with a different structure than $\pom$.
\end{example}

\begin{remark}
  \label{remark:higher_dimensions}
  Example \ref{example:non-2-sided-cad_poincare} gives us a $2$-dimensional example but we can use the same techniques to construct higher dimensional examples. These examples are of the type $\Omega \times (0,1)^n$, where $\Omega$ is the $2$-dimensional set from Example \ref{example:non-2-sided-cad_poincare}. Thus, for example, the $3$-dimensional example would be a hollow, twice-pinched cylinder with a thick boundary.
\end{remark}

\begin{remark}
  \label{remark:pinched_annulus}
  By using a once-pinched annulus instead of the twice-pinched one we used in Example \ref{example:non-2-sided-cad_poincare}, we get a connected set $\Omega$ such that it satisfies the $2$-sided corkscrew condition, the boundary $\pom$ is $1$-ADR and $\pom$ supports a weak $1$-Poincar\'e inequality. However, despite connectivity, $\Omega$ is still not a chord-arc domain: there are points arbitrarily close to each other on different sides of the pinched part of the annulus such that they can be connected inside $\Omega$ only by circling around almost the entire annulus.
\end{remark}

Thus, Corollary \ref{corollary:poincare} cannot be reversed and we cannot weaken its assumptions in the obvious way. It is natural to formulate the following problem:

\begin{problem}
  \label{problem:geometric_poincare}
  Let $\Omega \subset \R^{n+1}$ be an open set with $n$-dimensional Ahlfors--David regular boundary. Give a geometric characterization for weak Heinonen--Koskela type boundary $p$-Poincar\'e inequalities for $1 \le p \le n$.
\end{problem}

By a geometric characterization in Problem \ref{problem:geometric_poincare}, we mean a characterization of the type ''$\omega \in \text{weak-}A_\infty(\sigma)$ if and only if $\pom$ is UR and $\Omega$ satisfies the weak local John condition`` (which is the main result of \cite{azzametal1}). By \cite[Theorem 17.1]{cheeger}, we know that $\pom$ has to be quasiconvex, and by \cite{azzam_poincare}, we know that $\pom$ has to be UR. By Corollary \ref{corollary:poincare}, Example \ref{example:non-2-sided-cad_poincare} and \cite[Section 10]{mourgogloutolsa2}, we know that $2$-sided chord-arc domains are too strong for the characterization and ($1$-sided) chord-arc domains are not strong enough for a characterization. However, the answer does not lie somewhere between these two classes of domains: by Example \ref{example:non-2-sided-cad_poincare}, the connectivity properties of $\Omega$ itself do not play a big role in this problem. 

Concerning John-type conditions, we recall the open problem we mentioned in Section \ref{section:john_conditions}:

\begin{problem}
  \label{problem:weak_local_john}
  Let $\Omega \subset \R^{n+1}$ be an open set with $n$-UR (or just $n$-ADR) boundary. Suppose that $\Omega$ satisfies the local John condition. Does $\Omega$ also satisfy the weak local John condition?
\end{problem}
Problem \ref{problem:weak_local_john} is interesting only if $\Omega$ does not satisfy the exterior corkscrew condition: the local John condition implies the corkscrew condition and therefore the answer is trivially true in the presence exterior corkscrews by Lemma \ref{lemma:corkscrews_harmonic_measure}.

\bibliography{localJohn_and_poincare}
\bibliographystyle{alpha}

\end{document}